\newcommand{\ang}[1]{\left\langle #1 \right\rangle}
\newcommand{\paren}[1]{\left( #1 \right)}
\newcommand{\set}[1]{\left\{ #1 \right\}}
\newcommand{\setcond}[2]{\left\{ #1 \;\middle\vert\; #2 \right\}}
\newcommand{\CC}{\mathbb{C}}
\newcommand{\RR}{\mathbb{R}}
\newcommand{\NN}{\mathbb{N}}
\newcommand{\ZZ}{\mathbb{Z}}
\newcommand{\QQ}{\mathbb{Q}}
\newcommand{\cP}{\mathcal{P}}
\newcommand{\cQ}{\mathcal{Q}}
\newcommand{\cL}{\mathcal{L}}
\newcommand{\cM}{\mathcal{M}}
\newcommand{\cR}{\mathcal{R}}
\newtheorem{thm}{Theorem}[section]
\newtheorem{lem}[thm]{Lemma}
\newtheorem{cor}[thm]{Corollary}
\newtheorem{conj}[thm]{Conjecture}
\theoremstyle{definition}
\newtheorem{defn}[thm]{Definition}
\DeclareMathOperator{\GL}{GL}
\DeclareMathOperator{\Mat}{Mat}
\title{Sums of linear transformations}
\author{David Conlon\thanks{Department of Mathematics, Caltech, Pasadena, CA 91125, USA. Email: {\tt dconlon@caltech.edu}. Research supported by NSF Awards DMS-2054452 and DMS-2348859.} \and Jeck Lim\thanks{Department of Mathematics, Caltech, Pasadena, CA 91125, USA. Email: {\tt jlim@caltech.edu}. Research partially supported by an NUS Overseas Graduate Scholarship.}}
\date{}
\begin{document}
\maketitle

\begin{abstract}
We show that if $\cL_1$ and $\cL_2$ are linear transformations from $\ZZ^d$ to $\ZZ^d$ satisfying certain mild conditions, then, for any finite subset $A$ of $\ZZ^d$,
$$|\cL_1 A+\cL_2 A|\geq \paren{|\det(\cL_1)|^{1/d}+|\det(\cL_2)|^{1/d}}^d|A|- o(|A|).$$ 
This result corrects and confirms the two-summand case of a conjecture of Bukh and is best possible up to the lower-order term for certain choices of $\cL_1$ and $\cL_2$. As an application, we prove a lower bound for $|A  + \lambda \cdot A|$ when $A$ is a finite set of real numbers and $\lambda$ is an algebraic number. In particular, when $\lambda$ is of the form $(p/q)^{1/d}$ for some $p, q, d \in \NN$, each taken as small as possible for such a representation, we show that
$$|A + \lambda \cdot A| \geq (p^{1/d} + q^{1/d})^d |A| - o(|A|).$$
This is again best possible up to the lower-order term and extends a recent result of Krachun and Petrov which treated the case $\lambda = \sqrt{2}$.
\end{abstract}

\section{Introduction}

For any subset $A$ of a commutative ring $R$ (or, more generally, an $R$-module $M$) and any elements $\lambda_1, \dots, \lambda_k$ of $R$, let
\[\lambda_1 \cdot A + \cdots + \lambda_k \cdot A = \{\lambda_1 a_1 + \cdots + \lambda_k a_k : a_1, \dots, a_k \in A\}.\]
Such sums of dilates have attracted considerable attention in recent years, with the basic problem asking for an estimate on the minimum size of $|\lambda_1 \cdot A + \cdots + \lambda_k \cdot A|$ given $|A|$. Over the integers, this problem was essentially solved by Bukh~\cite{B08}, who showed that if $\lambda_1, \dots, \lambda_k$ are coprime integers, then, for any finite set of integers $A$,
\[|\lambda_1 \cdot A + \cdots + \lambda_k \cdot A| \geq (|\lambda_1| + \cdots + |\lambda_k|)|A| - o(|A|),\]
which is best possible up to the lower-order term. This result was later tightened by Balog and Shakan~\cite{BS14} when $k = 2$ and then Shakan~\cite{S16} in the general case, improving the $o(|A|)$ term to a constant depending only on $\lambda_1, \dots, \lambda_k$ (see also~\cite{CHS09, CSV10, DCS14, HR11, L13} for some earlier work on specific cases). 

Our principal concern here will be with generalisations of these results to higher dimensions. One possible direction is to again look at sums of dilates (see, for example,~\cite{BS15, CL22, H21, M19, M21, M22}). However, we will be concerned with a generalisation of a different kind, encapsulated in the following conjecture of Bukh. This conjecture first appeared on Bukh's webpage, but has since been reiterated by several other authors~\cite{KP20, M19, S16}.

\begin{conj} \label{conj:bukh}
Suppose that $\cL_1,\ldots, \cL_k \in \Mat_d(\ZZ)$ have no common non-trivial invariant subspace and $\cL_1\ZZ^d+\cdots +\cL_k\ZZ^d=\ZZ^d$. Then, for any finite subset $A$ of $\ZZ^d$,
$$|\cL_1 A+\cdots +\cL_k A|\geq (|\det(\cL_1)|^{1/d}+\cdots +|\det(\cL_k)|^{1/d})^d|A| - o(|A|).$$
\end{conj}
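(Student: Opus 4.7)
The plan is to combine a Freiman-type structural reduction with the continuous Brunn--Minkowski inequality, and then carefully transfer the resulting bound back to the discrete setting. Write $\sigma_i = |\det(\cL_i)|^{1/d}$ throughout, so the target estimate reads $|\cL_1 A + \cL_2 A| \geq (\sigma_1 + \sigma_2)^d |A| - o(|A|)$.

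First I would reduce to a structured model. If $|\cL_1 A + \cL_2 A| \geq C|A|$ for a large enough constant $C$ the claim is trivial, so assume otherwise. Then Pl\"unnecke--Ruzsa controls iterated sums of $\cL_1 A \cup \cL_2 A$, and the hypotheses that $\cL_1, \cL_2$ share no non-trivial invariant subspace and that $\cL_1 \ZZ^d + \cL_2 \ZZ^d = \ZZ^d$ allow a multidimensional Freiman-type theorem to place $A$ inside a genuinely $d$-dimensional generalised arithmetic progression of size $O(|A|)$. After an invertible integer change of coordinates, I may therefore assume that $A \subseteq B = \prod_{i=1}^d [N_i]$ with $|A| \geq c|B|$ for an absolute constant $c > 0$.

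Next, let $K = \conv(A) \subseteq \RR^d$. Since $\cL_i K$ has $d$-dimensional volume $\sigma_i^d \, \mathrm{vol}(K)$, applying the Brunn--Minkowski inequality to the convex bodies $\cL_1 K$ and $\cL_2 K$ yields $\mathrm{vol}(\cL_1 K + \cL_2 K) \geq (\sigma_1 + \sigma_2)^d \, \mathrm{vol}(K)$. This is the clean continuous analogue of the desired bound.

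The remaining, and main, task is the discrete-to-continuous transfer: I need both $\mathrm{vol}(K) \geq (1 - o(1))|A|$ and $|\cL_1 A + \cL_2 A| \geq (1 - o(1))\, \mathrm{vol}(\cL_1 K + \cL_2 K)$. For the first inequality I would run compression or shifting operations that do not increase $|\cL_1 A + \cL_2 A|$ to bring $A$ into a ``downset'' form inside $B$, whose convex hull then has volume close to $|A|$. For the second I would argue that most lattice points of $\ZZ^d = \cL_1 \ZZ^d + \cL_2 \ZZ^d$ lying in the convex body $\cL_1 K + \cL_2 K$ admit a representation $\cL_1 a + \cL_2 a'$ with $a, a' \in A$, by pigeonholing over the many representations of a generic interior point and using that $A$ is dense in $K$. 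The main obstacle is precisely this last transfer: both discretisations must lose only a $1 + o(1)$ factor rather than a multiplicative constant. The no-common-invariant-subspace hypothesis is essential here to force $\cL_1 K + \cL_2 K$ to be genuinely $d$-dimensional and to prevent $A$ from concentrating on a proper subspace, on which the Brunn--Minkowski step would be slack.
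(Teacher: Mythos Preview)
The statement you are trying to prove is \emph{false} as written; the paper does not prove Conjecture~\ref{conj:bukh} but rather exhibits explicit counterexamples to it and then proves a corrected version (Theorem~\ref{thm:main}) under stronger hypotheses (``irreducible'' and ``coprime'' in place of ``no common invariant subspace'' and ``$\cL_1\ZZ^d+\cdots+\cL_k\ZZ^d=\ZZ^d$''). So your argument necessarily has a gap, and both of the paper's counterexamples pinpoint where.

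First, your Freiman step asserts that the hypotheses force $A$ into a genuinely $d$-dimensional progression. They do not. Take $d=k=2$ and $\cL_1=\cL_2$ equal to rotation by $\pi/2$: this matrix has no non-trivial rational invariant subspace and $\cL_1\ZZ^2+\cL_2\ZZ^2=\ZZ^2$, yet for $A=\{(0,x):x\in[n]\}$ one gets $|\cL_1 A+\cL_2 A|=2|A|-1$ rather than $4|A|-o(|A|)$. The point is that ``no common invariant subspace'' does not rule out a common pair $U,V$ of proper subspaces with $\cL_i U\subseteq V$; when $A\subset U$, the problem collapses to lower dimension and your $d$-dimensional Brunn--Minkowski bound is simply not available. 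Second, even when $A$ is honestly $d$-dimensional, your discrete transfer ``most lattice points of $\cL_1 K+\cL_2 K$ are in $\cL_1 A+\cL_2 A$'' fails. Take $\cL_1=\begin{pmatrix}2&0\\0&1\end{pmatrix}$, $\cL_2=\begin{pmatrix}0&-1\\2&0\end{pmatrix}$ and $A=\{(x,2y):x,y\in[n]\}$: both of Bukh's hypotheses hold, $A$ is a full-dimensional grid, yet $\cL_1 A+\cL_2 A\subset 2\ZZ\times 2\ZZ$, so it contains only a quarter of the lattice points in its convex hull and $|\cL_1 A+\cL_2 A|\sim 4|A|$ instead of the predicted $8|A|$. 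The condition $\cL_1\ZZ^d+\cL_2\ZZ^d=\ZZ^d$ controls only common \emph{left} factors; a common \emph{right} factor (here $\begin{pmatrix}2&0\\0&1\end{pmatrix}$) can still trap the sumset in a sublattice, and your pigeonholing over representations cannot recover the missing cosets. The paper's actual proof, after correcting the hypotheses, does use a discrete Brunn--Minkowski step and Freiman, but the heart of the argument is an iterative bootstrapping over cosets of $\cL\ZZ^d$ that your outline does not contain.
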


The intuition behind this conjecture comes from the Brunn--Minkowski inequality (see, for example,~\cite{Gar02}). This classic inequality states that if $A$ and $B$ are two non-empty compact subsets of $\RR^d$, then
$$\mu(A+B)^{1/d}\geq \mu(A)^{1/d}+\mu(B)^{1/d},$$
where $\mu$ is the Lebesgue measure on $\RR^d$. Since $\mu(\cL A) = |\det(\cL)| \mu(A)$ for any $\cL \in \Mat_d(\RR)$ and any measurable subset $A$ of $\RR^d$, we may conclude that, for any $\cL_1, \cL_2 \in \Mat_d(\RR)$, 
$$\mu(\cL_1 A+\cL_2 A) \geq (\mu(\cL_1 A)^{1/d} + \mu(\cL_2 A)^{1/d})^d \geq (|\det(\cL_1)|^{1/d}+|\det(\cL_2)|^{1/d})^d \mu(A).$$
Moreover, the analogous statement holds for the sum of more transformations by a simple induction. Conjecture~\ref{conj:bukh} is then the statement that, under appropriate technical conditions, a discrete analogue of this result should hold, possibly with some correction term to deal with boundary effects.

The first result towards this conjecture was given by Mudgal~\cite{M19}, who showed that if $\cL \in GL_2(\RR)$ has no real eigenvalues, then $|A + \cL A| \geq 4|A| - o(|A|)$ for any finite subset $A$ of $\RR^2$. In particular, this confirms Conjecture~\ref{conj:bukh} when $k = d = 2$, $\cL_1$ is the identity and $|\det(\cL_2)| = 1$.\footnote{There is a caveat here, which is that Mudgal’s result, which applies to arbitrary subsets of $\RR^2$, requires that $\cL$ have no non-trivial invariant subspace over $\RR$. Our interpretation of Bukh’s conjecture, which concerns subsets of $\ZZ^d$  (or $\QQ^d$), is that there is instead no non-trivial invariant subspace over $\QQ$.} Surprisingly, despite this success, it turns out that Bukh's conjecture is not quite correct and both conditions, that $\cL_1,\ldots, \cL_k$ have no common non-trivial invariant subspace and that $\cL_1\ZZ^d+\cdots +\cL_k\ZZ^d=\ZZ^d$, need modification. 

The first condition, that $\cL_1,\ldots, \cL_k$ have no common non-trivial invariant subspace is clearly necessary, since otherwise, for subsets $A$ of such a common invariant subspace, the problem reduces to one of lower dimension. However, this is not the only case where the problem can reduce to one of lower dimension. For instance, a simple concrete example where this can happen is when $d=k=2$ and both $\cL_1$ and $\cL_2$ are anti-clockwise rotations about the origin by $\pi/2$. Indeed, even though $|\det(\cL_1)| = |\det(\cL_2)| = 1$, so that the conjecture predicts that $|\cL_1 A+\cL_2 A|\geq 4|A| - o(|A|)$, we only have $|\cL_1 A+\cL_2 A|=2|A|-1$ when $A=\setcond{(0,x)}{x\in [n]}$. In order to rule out such examples, we update Bukh's condition as follows.

\begin{defn}
We say that $\cL_1,\ldots,\cL_k\in \Mat_d(\ZZ)$ are \textit{irreducible} if there are no non-trivial subspaces $U$, $V$ of $\QQ^d$ of the same dimension such that $\cL_i U\subseteq V$ for all $i$.
\end{defn}

To reiterate the point, this condition is clearly necessary, since otherwise we may restrict $A$ and the $\cL_i$ to $U$, again reducing the problem to one of lower dimension.

Consider now the transformations
$$\cL_1=\begin{pmatrix}
2 & 0\\
0 & 1
\end{pmatrix}, \quad \cL_2=\begin{pmatrix}
0 & -1\\
1 & 0
\end{pmatrix}\begin{pmatrix}
2 & 0\\
0 & 1
\end{pmatrix}=\begin{pmatrix}
0 & -1\\
2 & 0
\end{pmatrix}.$$
It is easily checked that $\cL_1$ and $\cL_2$ are irreducible and that $\cL_1\ZZ^2+\cL_2\ZZ^2=\ZZ^2$. However, the set $A=\setcond{(x,2y)}{x,y\in [n]}$ has 
$|A|=n^2$ and $|\cL_1 A+\cL_2 A|=(2n-1)^2\sim 4|A|$,
giving another counterexample to Conjecture~\ref{conj:bukh}, which predicts that $|\cL_1 A+\cL_2 A| \geq 8|A| - o(|A|)$. 

The issue here is that $\cL_1$ and $\cL_2$ have a ``common right factor'' with determinant of absolute value $>1$. On the other hand, Bukh's condition that $\cL_1\ZZ^d+\cdots +\cL_k\ZZ^d=\ZZ^d$ is equivalent to $\cL_1, \dots, \cL_k$ not having a ``common left factor'' with determinant of absolute value $>1$. Indeed, if $\cL_1\ZZ^d+\cdots +\cL_k\ZZ^d=L\subsetneq\ZZ^d$, then there is some $\cP\in\Mat_d(\ZZ)$ with determinant of absolute value $>1$ such that $\cP\ZZ^d\supseteq L$, which implies that $\cP^{-1}\cL_i\ZZ^d\subseteq \ZZ^d$ and so $\cP^{-1}\cL_i\in\Mat_d(\ZZ)$ for all $i$. Conversely, if there is some $\cP\in\Mat_d(\ZZ)$ with determinant of absolute value $>1$ such that $\cP^{-1}\cL_i\in\Mat_d(\ZZ)$ for all $i$, then $\cL_1\ZZ^d+\cdots +\cL_k\ZZ^d \subseteq \cP \ZZ^d\subsetneq \ZZ^d$.  Our second condition incorporates and generalises both of these possibilities. 

\begin{defn}
We say that $\cL_1,\ldots,\cL_k\in\Mat_d(\ZZ)$ are \textit{coprime} if there are no 
$\cP,\cQ\in\GL_d(\QQ)$ with $0<|\det(\cP)\det(\cQ)|<1$ such that
$$\cP\cL_1\cQ,\cP\cL_2\cQ,\ldots, \cP\cL_k\cQ\in \Mat_d(\ZZ).$$
In particular, $\cL_1\ZZ^d+\cdots+\cL_k\ZZ^d=\ZZ^d$.
\end{defn}

To see that this condition is also necessary, observe that, for any $A\subset\QQ^d$, if we let $A'=\cQ^{-1}A$, then $|A'|=|A|$ and $|\cL_1 A+\cdots+\cL_k A|=|\cP\cL_1\cQ A'+\cdots +\cP\cL_k\cQ A'|$. But the transformations $\cP\cL_i\cQ$ have smaller determinants, suggesting that the lower bound should instead be phrased in terms of these determinants.

Taking all these observations into account, we arrive at the following modified version of Bukh's conjecture.

\begin{conj} \label{conj:bukh2}
Suppose that $\cL_1,\ldots,\cL_k\in \Mat_d(\ZZ)$ are irreducible and coprime. Then, for any finite subset $A$ of $\ZZ^d$,
$$|\cL_1 A+\cdots +\cL_k A|\geq \paren{|\det(\cL_1)|^{1/d}+\cdots +|\det(\cL_k)|^{1/d}}^d|A| - o(|A|).$$
\end{conj}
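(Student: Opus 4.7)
I will focus on the $k=2$ case, which the abstract indicates is the goal of the paper. The strategy is to mimic the continuous Brunn--Minkowski proof of the analogous volume bound in the discrete setting, with the coprimality and irreducibility conditions ruling out the ways it can fail.

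The first step is a normalization. I would apply Smith normal form to write $\cL_1=U D V$ with $U,V\in\GL_d(\ZZ)$ and $D$ diagonal. Replacing $A$ by $V A$ and then applying $U^{-1}$ leaves us with the pair $(D,U^{-1}\cL_2 V)$ without changing $|A|$, $|\cL_1 A+\cL_2 A|$, or either absolute determinant. Thus we may assume $\cL_1$ is diagonal, and the coprimality now becomes a concrete arithmetic condition on the entries of $\cL_2$ relative to the diagonal entries of $\cL_1$; in particular, no prime common to both can be pulled out on the left or right.

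Next, I would use Plünnecke--Ruzsa type inequalities to pin down the structure of near-extremal $A$. If $|\cL_1 A+\cL_2 A|\le K|A|$, then $|m\cL_1 A+n\cL_2 A|\le K^{m+n}|A|$ for all positive integers $m,n$, so $A$ has bounded doubling under each $\cL_i$. A Freiman-type theorem in $\ZZ^d$ then places $A$ inside a $d$-dimensional generalized arithmetic progression of size $O(|A|)$, and a covering or compression step reduces to the case where $A$ is essentially $K\cap\Lambda$ for some convex body $K\subset\RR^d$ and lattice $\Lambda\subset\QQ^d$, at a cost of only $o(|A|)$ in the count. For such $A$, Brunn--Minkowski applied to the continuous bodies $\cL_1 K,\cL_2 K$ gives
\[\mu(\cL_1 K+\cL_2 K)\ge\bigl(|\det\cL_1|^{1/d}+|\det\cL_2|^{1/d}\bigr)^d\mu(K),\]
and coprimality, which forces $\cL_1\ZZ^d+\cL_2\ZZ^d=\ZZ^d$, lets us convert this volume bound into a lattice-point count with only an $o(|A|)$ boundary error. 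The irreducibility hypothesis is used to exclude the degenerate case in which $A$, after the normalization, effectively lives in a common rational subspace and the effective dimension is smaller than $d$.

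The main obstacle is the discrete-to-continuous transfer in the last step. A finite set with bounded doubling does not automatically look like the lattice points of a convex body cleanly enough for Brunn--Minkowski to give the optimal constant, and one has to arrange the covering so that $\cL_1 K$ and $\cL_2 K$ simultaneously tile well against $\cL_1\ZZ^d+\cL_2\ZZ^d$. The coprimality condition is doing real work here, since without it a factorization $\cL_i=\cP\cL_i'\cQ$ with $|\det(\cP\cQ)|<1$ would shrink the effective volume and strictly beat the conjectured bound, exactly as in the counterexample with $\cL_1=\operatorname{diag}(2,1)$. For general $k>2$, the Brunn--Minkowski side extends trivially by induction, but the discrete transfer step becomes the genuine bottleneck and seems to require new ideas.
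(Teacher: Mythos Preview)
Your high-level ingredients (Pl\"unnecke--Ruzsa, Freiman, Brunn--Minkowski, and the roles of irreducibility and coprimality) all appear in the paper's proof, but the architecture is different in a way that matters. The step you yourself flag as ``the main obstacle'' --- reducing $A$ to the lattice points of a convex body and then reading off the constant from the continuous Brunn--Minkowski inequality applied to $\cL_1 K$ and $\cL_2 K$ --- is a genuine gap. Freiman places $A$ inside a generalized arithmetic progression, not inside $K\cap\Lambda$ for a convex $K$, and compression produces a downset, not a convex body; there is no clean passage from either structure to a convex body without losing the constant. Even granting $A=K\cap\ZZ^d$, the set $\cL_1 A+\cL_2 A$ is not well-approximated by the lattice points of $\cL_1 K+\cL_2 K$: the summands live in the sublattices $\cL_1\ZZ^d$ and $\cL_2\ZZ^d$, and how their sum fills out $\ZZ^d$ is an arithmetic question, not a volume question. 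Coprimality gives $\cL_1\ZZ^d+\cL_2\ZZ^d=\ZZ^d$, but that alone does not convert $\mu(\cL_1 K+\cL_2 K)$ into a point count with the correct leading coefficient.

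The paper avoids this transfer entirely. It first proves, via compressions, a \emph{discrete} Brunn--Minkowski inequality
\[
|B_1+B_2|\ge(|B_1|^{1/d}+|B_2|^{1/d})^d-\sum_{I\subsetneq[d]}|p_I(B_1+B_2)|,
\]
and then uses Freiman together with irreducibility (which forbids $A$ from concentrating on any affine hyperplane) to show the projection error is $O(|A|^{1-\sigma})$ whenever $B_1\subseteq A$, $B_2\subseteq\cL A$ and $|A+\cL A|\le K|A|$. Crucially, applied with $B_1=A$ and $B_2=\cL A$ this only gives $|A+\cL A|\ge 2^d|A|-o(|A|)$; the determinant factors are \emph{not} visible here. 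They arise instead from a separate \emph{bootstrap}: partition $A$ into its intersections $A_i$ with the cosets of suitable sublattices (e.g.\ $\cL\ZZ^d$, or in the general case $\cP_1\ZZ^d$ and $\cP_2\ZZ^d$), note that $\cL_1 A+\cL_2 A$ splits as a disjoint union over these cosets, bound one piece by the discrete Brunn--Minkowski lemma and the rest by the current inductive hypothesis, and iterate. Coprimality enters not as a tiling statement for a continuous body but as the guarantee, via a trichotomy on the image of $A$ in a finite quotient group, that this coset decomposition cannot degenerate. This iterative coset argument is the idea your proposal is missing.
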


Our main result is a proof of this modified conjecture for $k=2$ and any $d$ in the following strong form. We note that this result is best possible up to the lower-order term in certain cases, for instance, when $d=2$, $\cL_1$ is the identity and $\cL_2 \in \Mat_2(\ZZ)$ is a dilate of a rotation about the origin through an angle which is not an integer multiple of $\pi$.

\begin{thm} \label{thm:main}
Suppose that $\cL_1,\cL_2\in \Mat_d(\ZZ)$ are irreducible and coprime. Then there are constants $D$, $\sigma>0$ such that, for any finite subset $A$ of $\ZZ^d$, 
$$|\cL_1 A+\cL_2 A|\geq \paren{|\det(\cL_1)|^{1/d}+|\det(\cL_2)|^{1/d}}^d|A|-D|A|^{1-\sigma}.$$
\end{thm}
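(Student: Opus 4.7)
The guiding principle is the Brunn--Minkowski inequality, which recovers exactly the target constant in the continuous setting. The difficulty is that a naive discretization --- thickening each lattice point of $A$ to a unit cube and applying continuous Brunn--Minkowski to $\cL_1 A + \cL_2 A + \cL_1[0,1]^d + \cL_2[0,1]^d$ --- only yields the inequality with a loss factor $\mu(\cL_1[0,1]^d + \cL_2[0,1]^d)$, which is generally much larger than $1$. The plan is therefore to extract a structured subset of $A$ that is dense in its convex hull, on which the comparison with continuous Brunn--Minkowski loses only a polynomial error term.

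First I would reduce to a convenient normal form. Using the Smith normal form, write $\cL_1 = \mathcal{U}\mathcal{D}\mathcal{V}$ with $\mathcal{U}, \mathcal{V} \in \GL_d(\ZZ)$ and $\mathcal{D}$ diagonal with positive integer entries. After replacing $A$ by $\mathcal{V} A$ and relabelling, one may assume $\cL_1 = \mathcal{D}$ while the pair $(\cL_1, \cL_2)$ remains irreducible and coprime; this unimodular change preserves both sides of the desired inequality.

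Next, dichotomize on the doubling $K = |\cL_1 A + \cL_2 A|/|A|$. If $K$ exceeds the target constant by a fixed factor, the conclusion is immediate. In the complementary small-doubling regime, I would apply a Pl\"unnecke--Ruzsa type inequality to the pair $(\cL_1 A, \cL_2 A)$ to control $|m\cL_1 A + n\cL_2 A|$ for all small $m, n$. Irreducibility prevents $A$ from concentrating in a common invariant subspace, and a sharpened Freiman-type structure theorem then extracts a subset $A' \subseteq A$ of density $1 - o(1)$ that efficiently fills a $d$-dimensional generalized arithmetic progression $P$ of volume $(1+o(1))|A|$. Coprimality forces $P$ to live in $\ZZ^d$ rather than a strict sub-lattice, ruling out the ``common right factor'' counterexamples from the introduction. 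For such $A'$ dense in the convex progression $P$, each element of $\cL_1 \conv(P) + \cL_2 \conv(P)$ has $\Omega(|P|)$ representations as $\cL_1 p + \cL_2 q$ with $p, q \in P$, so almost all such sums survive when $P$ is replaced by $A'$; the continuous Brunn--Minkowski inequality applied to $\conv(P)$ then yields the correct leading constant, with the error $D|A|^{1-\sigma}$ coming from the $O(|A|^{(d-1)/d})$ boundary of $\conv(P)$ together with the density gap $|A| - |A'|$.

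The central obstacle is making the structure extraction \emph{quantitatively sharp}: one needs a containing progression $P$ of volume $(1+o(1))|A|$, whereas direct Freiman-type theorems over $\ZZ^d$ yield only a bound $C(K)|A|$ with an uncontrolled constant that would destroy the leading term. Achieving the sharper volume bound will require using irreducibility and coprimality in a structural way, plausibly through an iterative peeling argument that successively removes small pieces of $A$ lying on exceptional lower-dimensional flats, using $\cL_2$ to mix across dimensions until a dense convex core remains. This refinement is the technical heart of the argument and generalises the rectification step carried out by Krachun and Petrov in the scalar case $\lambda = \sqrt{2}$.
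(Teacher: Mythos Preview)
Your proposal has a genuine gap at exactly the point you flag as ``the central obstacle'': obtaining a containing progression $P$ with $|P| = (1+o(1))|A|$. This is not a technicality to be filled in later; it is essentially equivalent to the theorem itself. A stability result of that strength would say that any $A$ with $|\cL_1 A + \cL_2 A|$ close to $(|\det\cL_1|^{1/d}+|\det\cL_2|^{1/d})^d|A|$ is nearly a full progression, which presupposes knowing that this quantity is the extremal constant. The direct Freiman--Bilu machinery gives $|P| \le C(K)|A|$ with $C(K)$ large, and there is no known ``$1+o(1)$'' refinement that does not already assume the sharp sumset inequality. Your suggested ``iterative peeling'' remains a hope rather than a mechanism, and the reference to Krachun--Petrov does not help: their $\sqrt{2}$ argument also proceeds by bootstrapping a weak bound to the sharp one, not by a one-shot dense-core extraction.

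The paper's route is structurally different and avoids the obstacle altogether. It \emph{does} use Freiman's theorem, but only to place $A+\cL A$ in a progression of size $O_K(|A|)$; from this it deduces a discrete Brunn--Minkowski inequality $|B_1+B_2| \ge (|B_1|^{1/d}+|B_2|^{1/d})^d - D|A|^{1-\sigma}$ valid for \emph{arbitrary} subsets $B_1\subseteq A$, $B_2\subseteq \cL A$ (the constant-factor slack in Freiman is absorbed into the error term, not the leading constant). The sharp constant is then reached by an amplification: partition $A$ into cosets of a sublattice determined by $\cL_1,\cL_2$, apply the current weak hypothesis $|\cL_1 A_i+\cL_2 A_i|\ge ((p^{1/d}+q^{1/d})^d-\alpha)|A_i|$ to each coset, and apply the discrete Brunn--Minkowski to one extra cross-term. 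A trichotomy lemma (either $A$ sits in a proper sublattice and can be rescaled, or the sumset meets an extra coset, or a coset splits nontrivially) guarantees this always gains a fixed fraction of $\alpha$; iterating $O(\log|A|)$ times drives $\alpha$ below $|A|^{-\sigma'}$. The point is that Brunn--Minkowski is never asked to produce the full constant in one application --- it only supplies a small increment at each step, so the loose $C(K)$ from Freiman never contaminates the main term.
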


The proof of this result has two main steps. First, in Section~\ref{sec:BM}, we use compression methods to prove a certain discrete version of the Brunn--Minkowski inequality. The core of the proof is then a bootstrapping argument that starts with a trivial bound and repeatedly improves it using our Brunn--Minkowski inequality, ultimately approaching the estimate stated in Theorem~\ref{thm:main}. 
Because the details of this second step are rather easier to digest when $\cL_1$ is the identity map, we will, in Section~\ref{sec:iden}, first prove Theorem~\ref{thm:main} in this special case. We then prove the full result in Section~\ref{sec:main}. 

As an application of Theorem~\ref{thm:main}, we make progress on a question raised recently by Shakan~\cite{S16} and by Krachun and Petrov~\cite{KP20}, namely, for a fixed real algebraic number $\lambda$ and a finite subset $A$ of $\RR$, how large is $|A+\lambda\cdot A|$ in terms of $|A|$? The analogous question when $\lambda$ is transcendental has received considerable attention~\cite{KL06, S08, S12, Sch11}, culminating in a recent result of the authors~\cite{CL23}, drawing on some of the methods of this paper, saying that $|A+\lambda\cdot A| \geq e^{c \sqrt{\log |A|}}|A|$ for some $c > 0$, which is best possible up to the value of $c$.

%culminating in a result of Sanders~\cite{S12} saying that $|A+\lambda\cdot A| \geq e^{\log^{c} |A|}|A|$ for some $c > 0$, which is best possible up to the value of $c$.

For algebraic $\lambda$, there is a general result due to Chen and Fang~\cite{CF18}, itself improving an earlier result of Breuillard and Green~\cite{BG13}, saying that, for any fixed $\lambda \geq 1$, $|A + \lambda \cdot A| \geq (1 + \lambda - o(1))|A|$ 
holds for all finite subsets $A$ of $\RR$. This is best possible when $\lambda$ is an integer, but can be quite slack in other cases. For instance, when $\lambda = p/q$ with $p$ and $q$ coprime, then the result of Balog and Shakan~\cite{BS14} that $|p \cdot A + q \cdot A| \geq (p+q)|A|  - C_{p,q}$ for all finite subsets $A$ of $\ZZ$ easily implies that 
$$|A + \tfrac pq \cdot A| \geq (p+q)|A|  - C_{p,q}$$ 
for all finite subsets $A$ of $\RR$. 

In their paper, Krachun and Petrov~\cite{KP20} studied the case where $\lambda = \sqrt{2}$, showing that 
$$|A + \sqrt{2} \cdot A| \geq (1 + \sqrt{2})^2|A|  - o(|A|),$$ 
which is best possible up to the lower-order term, as may be seen by considering the set $A  = \{x + y \sqrt{2}: 0 \leq x < M, 0  \leq y < N\}$ with the ratio $M/N$ approaching $\sqrt{2}$. They also formulated a conjecture for a general real algebraic $\lambda$. Indeed, if $f(x) \in \mathbb{Z}[x]$ is the minimal polynomial of $\lambda$, assumed to have coprime coefficients, and $f(x) = \prod_{i=1}^d (a_i x + b_i)$ is a full complex factorisation of $f$, let $H(\lambda) = \prod_{i=1}^d (|a_i| + |b_i|)$. Then the conjecture of Krachun and Petrov, for which they prove the upper bound, is as follows. 

\begin{conj}[Krachun--Petrov~\cite{KP20}] \label{conj:KP}
For any real algebraic number $\lambda$,
\[\lim_{n \rightarrow \infty} \min_{A \subset \RR, |A| = n} \frac{|A + \lambda \cdot A|}{|A|} = H(\lambda).\]
\end{conj}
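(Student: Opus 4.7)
The plan is to reduce $|A + \lambda \cdot A|$ for $A \subset \RR$ to a lattice sumset $|\cL_1 A' + \cL_2 A'|$ with $A' \subset \ZZ^d$ and $\cL_1, \cL_2 \in \Mat_d(\ZZ)$ irreducible and coprime, and then invoke Theorem~\ref{thm:main}. First I would reduce to the case $A \subset \QQ(\lambda)$: since $A$ is finite its $\QQ(\lambda)$-linear span $V \subset \RR$ is a finite-dimensional $\QQ(\lambda)$-vector space, and a generic $\QQ(\lambda)$-linear functional $\pi \colon V \to \QQ(\lambda)$ is injective on $A$. Because $\pi$ is in particular $\QQ$-linear, $\pi(A + \lambda A) = \pi(A) + \lambda \pi(A)$, whence $|A + \lambda A| \geq |\pi(A) + \lambda \pi(A)|$ and $|\pi(A)| = |A|$.

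Next, identify $\QQ(\lambda)$ with $\QQ^d$ via the basis $1, \lambda, \ldots, \lambda^{d-1}$, so that multiplication by $\lambda$ becomes the companion matrix $\Lambda_\lambda \in \Mat_d(\QQ)$ of the minimal polynomial $f(x) = c_d x^d + \cdots + c_0$, and scale $A$ so that $A \subset \ZZ^d$. The key step is to choose an integer matrix $\cM$ of minimal determinant for which $\cM \Lambda_\lambda \in \Mat_d(\ZZ)$ and put $\cL_1 = \cM$, $\cL_2 = \cM \Lambda_\lambda$. Since $\cM$ is a $\QQ$-linear bijection, $|\cL_1 A + \cL_2 A| = |\cM(A + \Lambda_\lambda A)| = |A + \lambda A|$, and Theorem~\ref{thm:main} would then yield
\[|A + \lambda A| \geq \paren{|\det \cL_1|^{1/d} + |\det \cL_2|^{1/d}}^d |A| - o(|A|),\]
provided $\cL_1$ and $\cL_2$ are irreducible and coprime.

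For $\lambda = (p/q)^{1/d}$ with $p, q, d$ taken as small as possible (so $\gcd(p, q) = 1$ and $x^d - p/q$ is irreducible over $\QQ$ by the minimality of $d$), the minimal polynomial is $qx^d - p$ and the only non-integer entry of $\Lambda_\lambda$ is $p/q$ at position $(1, d)$. Taking $\cM = \operatorname{diag}(q, 1, \ldots, 1)$ clears this denominator, yielding $|\det \cL_1| = q$ and a cyclic-shift $\cL_2$ with $|\det \cL_2| = p$. Irreducibility of the pair is equivalent to $\cL_1^{-1} \cL_2$, which is the companion matrix of the irreducible polynomial $x^d - p/q$, having no non-trivial rational invariant subspace. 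Coprimality follows from $\gcd(p, q) = 1$: writing any candidate reduction's $|\det \cP \det \cQ|$ as $r/s$ in lowest terms forces $s \mid q$ and $s \mid p$, hence $s = 1$, making $|\det \cP \det \cQ|$ a positive integer that cannot lie in $(0, 1)$. Theorem~\ref{thm:main} now produces
\[|A + \lambda A| \geq (p^{1/d} + q^{1/d})^d |A| - o(|A|) = H(\lambda)\,|A| - o(|A|).\]

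The main obstacle to the full conjecture is that $H(\lambda) = c_d \prod_i (1 + |\lambda_i|)$ can be written as $(m_1^{1/d} + m_2^{1/d})^d$ with $m_1, m_2$ positive integers essentially only when all Galois conjugates $\lambda_i$ share a common absolute value, a condition that (for real $\lambda$) singles out precisely the family $\lambda = \pm(p/q)^{1/d}$. For more typical algebraic $\lambda$ such as the golden ratio, no such $m_1, m_2$ exist, so a direct application of Theorem~\ref{thm:main} in dimension $d$ cannot reach the sharp constant. Bridging this gap would apparently require either a multi-summand generalisation of Theorem~\ref{thm:main} that mirrors the product structure of $H(\lambda)$, or a more subtle lifting into $\QQ^D$ with $D > d$ that splits multiplication by $\lambda$ into integer transformations whose determinants collectively encode the factors $(1 + |\lambda_i|)$ separately.
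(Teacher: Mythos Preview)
This statement is a conjecture that neither you nor the paper proves in full: both establish only the case $\lambda = (p/q)^{1/d}$, and your final paragraph correctly identifies why Theorem~\ref{thm:main} alone cannot reach $H(\lambda)$ for general algebraic $\lambda$. For the $(p/q)^{1/d}$ case your argument is correct and essentially the same as the paper's (Section~\ref{sec:alg}): reduce to $A \subset \QQ[\lambda]$ (the paper quotes this step as Lemma~\ref{lem:alg} from Krachun--Petrov, equivalent to your generic-functional projection), identify $\QQ[\lambda] \cong \QQ^d$ via the power basis, realise multiplication by $\lambda$ as the companion matrix, and clear the lone denominator with a diagonal integer matrix to obtain $\cL_1, \cL_2 \in \Mat_d(\ZZ)$ with determinants $q$ and $p$.

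The only differences are cosmetic. The paper factors as $\cL_2 = \cL \cL_1$ with $\cL_1 = \operatorname{diag}(1,\ldots,1,q)$ and passes to $B = \cL_1^{-1} A$, whereas you factor as $\cL_2 = \cM \Lambda_\lambda$ with $\cM = \operatorname{diag}(q,1,\ldots,1)$ and keep $A$; both are valid. For irreducibility and coprimality the paper develops general criteria (Theorems~\ref{thm:irred} and~\ref{thm:coprime}) in terms of the characteristic polynomial of $\cL_1^{-1}\cL_2$, while you give a direct determinant-divisibility check specific to $\gcd(p,q)=1$. Your argument is shorter for this particular $\lambda$, but the paper's criteria also handle arbitrary algebraic $\lambda$, yielding the general (though sub-sharp) lower bound of Theorem~\ref{thm:alg}, which your proposal does not attempt.
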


While we fall short of proving this conjecture, Theorem~\ref{thm:main} does allow us to prove the following result. 

\begin{thm} \label{thm:alg}
Suppose that $\lambda\in\RR$ is an algebraic number with minimal polynomial $p(x)=a_dx^d+\cdots+a_0\in\ZZ[x]$, where all the $a_i$ are coprime. Then there are constants $D$, $\sigma>0$ such that 
$$|A+\lambda\cdot A|\geq (|a_d|^{1/d}+|a_0|^{1/d})^d|A| - D|A|^{1-\sigma}$$
holds for all finite subsets $A$ of $\RR$. 
\end{thm}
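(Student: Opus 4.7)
The plan is to deduce Theorem~\ref{thm:alg} from Theorem~\ref{thm:main} by finding a pair of integer matrices that encode the map $A \mapsto A + \lambda \cdot A$. The first step is a Freiman-type reduction to the case $A \subset \QQ(\lambda)$: let $V' \subset \RR$ be the $\QQ$-span of $A \cup \lambda A \cup \cdots \cup \lambda^{d-1} A$. Since $\lambda^d$ is a $\QQ$-linear combination of lower powers of $\lambda$, the space $V'$ is preserved by multiplication by $\lambda$, and because the minimal polynomial $p(x)/a_d$ of $\lambda$ is irreducible of degree $d$, $V'$ inherits the structure of a $\QQ(\lambda)$-vector space. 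A generic $\QQ(\lambda)$-linear functional $\phi: V' \to \QQ(\lambda)$ is injective on $A$, since the $\phi$ that fail to separate two fixed points of $A$ form a finite union of proper $\QQ(\lambda)$-hyperplanes in the dual. Since $\phi(A + \lambda A) = \phi(A) + \lambda \phi(A)$ and $|\phi(A)| = |A|$, it suffices to prove the bound when $A \subset \QQ(\lambda)$.

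Identify $\QQ(\lambda) \cong \QQ^d$ via the basis $1, \lambda, \ldots, \lambda^{d-1}$, and scale $A$ so that its image lies in $\ZZ^d$. Multiplication by $\lambda$ is then the companion matrix $M_\lambda$ of $p(x)/a_d$, whose last column has the non-integer entries $-a_i/a_d$. To clear the denominators I would set
\[\cL_1 = \mathrm{diag}(1, \ldots, 1, a_d), \qquad \cL_2 = M_\lambda \cL_1,\]
so that $\cL_2$ has $1$'s on the subdiagonal and last column $(-a_0, \ldots, -a_{d-1})^T$. Both matrices lie in $\Mat_d(\ZZ)$, with $|\det \cL_1| = |a_d|$ and $|\det \cL_2| = |a_0|$, and after replacing $A$ by a $\ZZ$-linear image $A''$ of the same cardinality one verifies that $|\cL_1 A'' + \cL_2 A''| = |A + \lambda \cdot A|$. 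Theorem~\ref{thm:main} applied to $(\cL_1, \cL_2)$ would then give exactly the estimate claimed in Theorem~\ref{thm:alg}, provided the pair is irreducible and coprime.

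Irreducibility is immediate: $\cL_1$ is invertible, so the condition $\cL_i U \subseteq V$ with $\dim U = \dim V$ forces $V = \cL_1 U$, making $U$ invariant under $\cL_1^{-1}\cL_2$; but $\cL_1^{-1}\cL_2$ is $\QQ$-conjugate to $M_\lambda$, whose characteristic polynomial $p(x)/a_d$ is irreducible of degree $d$, so the only invariant subspaces are $0$ and $\QQ^d$. Verifying coprimality will be the main obstacle. Suppose for contradiction that there exist $\cP, \cQ \in \GL_d(\QQ)$ with $|\det \cP \det \cQ| = r/s < 1$ in lowest terms such that $\cP \cL_i \cQ \in \Mat_d(\ZZ)$ for both $i$. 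Integrality of the determinants immediately forces $s \mid \gcd(a_d, a_0)$. Next, the integer mat
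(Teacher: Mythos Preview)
Your overall strategy matches the paper's exactly: reduce to $A\subset\QQ(\lambda)\cong\QQ^d$, write multiplication by $\lambda$ as a companion matrix, clear denominators via the diagonal matrix $\cL_1=\mathrm{diag}(1,\ldots,1,a_d)$ to obtain the integer pair $(\cL_1,\cL_2)$ with $\cL_2=M_\lambda\cL_1$, and then invoke Theorem~\ref{thm:main}. Your reduction to $\QQ(\lambda)$ via a generic $\QQ(\lambda)$-linear functional is a clean variant of the Krachun--Petrov lemma the paper cites, and your irreducibility argument is essentially the content of the paper's Theorem~\ref{thm:irred}.

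The genuine gap is the coprimality verification, which your proposal leaves unfinished. The determinant argument you begin only yields $s\mid\gcd(a_d,a_0)$, and that is \emph{not} enough: take for instance $p(x)=2x^2+x+2$, where the coefficients are globally coprime but $\gcd(a_2,a_0)=2$, so $s=2$ is not excluded by determinants alone. To conclude you must bring the intermediate coefficients $a_1,\ldots,a_{d-1}$ into play, and these are not determinants of any $\cP\cL_i\cQ$. The paper handles this by establishing a general criterion (Theorem~\ref{thm:coprime}): an irreducible pair $P,Q$ is coprime if and only if $|\det P|$ is the least positive integer clearing the denominators of the characteristic polynomial of $P^{-1}Q$. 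The key step (Lemma~\ref{lem:minor}) is that every $k\times k$ minor of $(RPS)^{-1}(RQS)$ times $\det(RPS)$ is an integer whenever $RPS,RQS\in\Mat_d(\ZZ)$; since the coefficient $a_{d-k}/a_d$ is a $\ZZ$-linear combination of such $k\times k$ minors, this forces $s\mid a_i$ for \emph{every} $i$, hence $s=1$. Your direct approach can be salvaged along exactly these lines, but the minor argument is the missing ingredient, and without it the proof does not close.
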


In particular, when $\lambda$ is of the form $(p/q)^{1/d}$ for some $p, q, d \in \NN$, each taken as small as possible for such a representation, the minimal polynomial for $\lambda$ is $f(x) = q x^d - p$, so that there are constants $D$, $\sigma>0$ such that 
$$|A+\lambda\cdot A| \geq (p^{1/d} + q^{1/d})^d |A| - D|A|^{1-\sigma}.$$
However, it is also easy to see that $H((p/q)^{1/d}) = (p^{1/d} + q^{1/d})^d$, so that we have confirmed the Krachun--Petrov conjecture in this case. 

\begin{cor}
For any $\lambda$ of the form $(p/q)^{1/d}$ for some $p, q, d \in \NN$, each taken as small as possible for such a representation, there are constants $D$, $\sigma>0$ such that 
$$|A+\lambda\cdot A| \geq (p^{1/d} + q^{1/d})^d |A| - D|A|^{1-\sigma}$$
holds for all finite subsets $A$ of $\RR$. Moreover, this is best possible up to the lower-order term.
\end{cor}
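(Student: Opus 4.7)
The Corollary is essentially an unpacking of Theorem~\ref{thm:alg} in the special case $\lambda = (p/q)^{1/d}$. My plan is to identify the minimal polynomial of $\lambda$ as $f(x) = qx^d - p$, read off $a_d = q$ and $a_0 = -p$ to obtain the lower bound from Theorem~\ref{thm:alg}, and then observe that $H(\lambda) = (p^{1/d} + q^{1/d})^d$ so that the matching upper bound is already supplied by the construction of Krachun and Petrov.

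The main (and only slightly subtle) step is identifying the minimal polynomial. Clearly $\lambda$ is a root of $qx^d - p$, and the minimality of $p, q$ for the fixed value of $d$ forces $\gcd(p, q) = 1$, so the coefficients are coprime. For irreducibility over $\QQ$, I would suppose for contradiction that $qx^d - p$ admits a factor of degree $d' \in (0, d)$ over $\QQ$. Its roots are some size-$d'$ subset of the full root set $\{\lambda \zeta^k : 0 \le k \le d-1\}$, where $\zeta = e^{2\pi i / d}$, so the constant term of this factor has the form $c \lambda^{d'} \omega$ for some $c \in \QQ$ and some root of unity $\omega$. Rationality forces $\lambda^{d'} \omega \in \QQ$; since $\lambda > 0$ is real, $\omega \in \{\pm 1\}$ and hence $\lambda^{d'} \in \QQ$. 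Combined with $\lambda^d = p/q \in \QQ$, this gives $\lambda^e \in \QQ$ for $e = \gcd(d, d') < d$, so that $\lambda = (\lambda^e)^{1/e}$ expresses $\lambda$ as an $e$-th root of a rational with $e < d$, contradicting the minimality of $d$.

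With the minimal polynomial in hand, Theorem~\ref{thm:alg} applied with $a_d = q$ and $a_0 = -p$ yields the lower bound $|A + \lambda \cdot A| \geq (p^{1/d} + q^{1/d})^d |A| - D|A|^{1-\sigma}$ immediately. For the matching upper bound, I would factor $f$ over $\CC$ as
$$qx^d - p = \prod_{k=0}^{d-1} \left(q^{1/d} x - p^{1/d}\zeta^k\right),$$
so that each factor contributes $q^{1/d} + p^{1/d}$ to $H(\lambda)$, giving $H(\lambda) = (p^{1/d} + q^{1/d})^d$. The Krachun--Petrov upper-bound construction for their conjecture (taking $A$ to be the image under $(c_0, \ldots, c_{d-1}) \mapsto \sum_{i=0}^{d-1} c_i \lambda^i$ of an appropriately proportioned integer box) then asymptotically achieves this value, showing that the lower bound is best possible up to the lower-order term. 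I expect no real obstacle anywhere: the whole argument is a verification, with the irreducibility check being the only place any thought is required.
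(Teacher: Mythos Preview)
Your proposal is correct and follows exactly the route the paper takes: the paper simply asserts that the minimal polynomial of $\lambda$ is $qx^d - p$, reads off $a_d = q$ and $a_0 = -p$ to invoke Theorem~\ref{thm:alg}, and notes that $H(\lambda) = (p^{1/d}+q^{1/d})^d$ so that Krachun--Petrov's upper-bound construction matches. You supply more detail than the paper does---in particular a clean justification of the irreducibility of $qx^d - p$ from the minimality of $d$---but the structure is identical.
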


For further details, we refer the reader to Section~\ref{sec:alg}.

\section{A discrete Brunn--Minkowski inequality} \label{sec:BM}

In this section, we begin our proof of  Theorem~\ref{thm:main} by using compression arguments to establish the following discrete analogue of the Brunn--Minkowski theorem. We refer the reader to~\cite{BL96, GG01, GT06}
for a selection of results in a similar vein.

\begin{lem} \label{lem:discbm}
Fix a basis $\set{b_1,\ldots,b_d}$ of $\RR^d$. For each  $I\subseteq [d]$, let $p_I:\RR^d\to \RR^I$ be the projection onto the span of $\set{b_i}_{i\in I}$ along the given basis. Then, for any finite subsets  $A,B$  of $\RR^d$,
$$|A+B|\geq (|A|^{1/d}+|B|^{1/d})^d-\sum_{I\subsetneq [d]}|p_I(A+B)|.$$
\end{lem}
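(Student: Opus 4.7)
The plan is to combine a compression argument with the continuous Brunn--Minkowski inequality. After a linear change of coordinates, assume $\set{b_1,\ldots,b_d}$ is the standard basis of $\RR^d$. The desired inequality is equivalent, via $|A+B|=|p_{[d]}(A+B)|$, to
\[\sum_{I\subseteq [d]}|p_I(A+B)| \geq (|A|^{1/d}+|B|^{1/d})^d,\]
and the first goal is to reduce to the case where $A$ and $B$ are \emph{downsets} in $\ZZ_{\geq 0}^d$, i.e., sets closed under replacing any coordinate by a smaller non-negative value.

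For each $i\in[d]$, let $C_i$ be the compression that replaces every fiber of $A$ in direction $b_i$ by the initial segment of $\ZZ_{\geq 0}$ of the same cardinality. Compressions preserve $|A|$ and $|B|$, and the standard sumset compression lemma shows they do not increase $|A+B|$. The essential new check is that $C_i$ does not increase $|p_I(A+B)|$ for any $I\subsetneq[d]$: when $i\notin I$ this is immediate, since the projection then depends only on the shadows $p_{[d]\setminus\set{i}}(A)$ and $p_{[d]\setminus\set{i}}(B)$, which are fixed by $C_i$; when $i\in I$, a fiberwise analysis using the one-dimensional bound $|X+Y|\geq |X|+|Y|-1$ shows that $(C_i A+C_i B)$'s fiber in direction $b_i$ above each base point is an initial segment no longer than the corresponding fiber of $A+B$. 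Since compression only decreases the left-hand side of the rewritten inequality, we may iterate $C_1,\ldots,C_d$ until $A$ and $B$ are fixed by every $C_i$, at which point both are downsets.

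For downsets $A$ and $B$, consider the puff-ups $\bar A=A+[0,1)^d$ and $\bar B=B+[0,1)^d$, which are disjoint unions of unit cubes at lattice points, so $\mu(\bar A)=|A|$ and $\mu(\bar B)=|B|$. Continuous Brunn--Minkowski gives $\mu(\bar A+\bar B)\geq (|A|^{1/d}+|B|^{1/d})^d$. On the other hand, $\bar A+\bar B=(A+B)+[0,2)^d=\bigl((A+B)+\set{0,1}^d\bigr)+[0,1)^d$ is again a disjoint union of unit cubes at lattice points, so $\mu(\bar A+\bar B)=|(A+B)+\set{0,1}^d|$. The lemma thus reduces to proving the counting identity
\[|S+\set{0,1}^d|=\sum_{I\subseteq [d]}|p_I(S)|\]
for every non-empty downset $S\subseteq\ZZ_{\geq 0}^d$, applied to $S=A+B$ (which is itself a downset). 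This identity is proved by induction on $d$: slicing $S$ at height $t$ in direction $b_d$ yields nested sets $S^{(0)}\supseteq S^{(1)}\supseteq\cdots$, and the downset property forces $(S+\set{0,1}^d)^{(t)}=S^{(\max(t-1,0))}+\set{0,1}^{d-1}$. Summing over $t$, applying the inductive hypothesis to each slice, and regrouping by whether $d\in I$ (using that $p_J(S)=p_J(S^{(0)})$ for $J\subseteq [d-1]$) finishes the proof.

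The main obstacle is verifying that compression does not increase $|p_I(A+B)|$ when $i\in I$, which requires carefully tracking how initial segments interact under projection and Minkowski addition. Once this reduction is in place, the puff-up and downset counting identity combine cleanly to yield the bound.
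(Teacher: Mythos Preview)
Your proposal is correct and follows essentially the same route as the paper: compress $A$ and $B$ to downsets (checking that each $|p_I(A+B)|$ can only decrease), puff up to unions of unit cubes, apply the continuous Brunn--Minkowski inequality, and identify the resulting measure with $\sum_{I\subseteq[d]}|p_I(A+B)|$. The only cosmetic differences are that the paper puffs downward (cubes $\prod_i[x_i-1,x_i]$, shift by $\{0,-1\}^d$) rather than upward, and it verifies the counting identity by an explicit decomposition of $(A+B)+\{0,-1\}^d$ into sets $P_S$ indexed by $S\subseteq[d]$ rather than by your induction on $d$.
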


\begin{proof}
By applying a suitable linear transformation, we may assume that the basis is the standard one. Let $p_i=p_{[d]\setminus\{i\}}:\RR^d\to \RR^{d-1}$ be the linear map that removes the $i$th coordinate. 

We define $i$-compressions for $i=1,\ldots,d$ as follows. For a set $A\subset \RR^d$ and a point $x\in p_i(A)$, let $A_x=p_i^{-1}(x)$. Define the $i$-compression of $A$ to be the set $A'$ such that $p_i(A')=p_i(A)$ and, for each $x\in p_i(A)$, the $i$th coordinates of $A_x'$ are $0,1,\ldots,|A_x|-1$. Note that $|A'|=|A|$, so an $i$-compression does not alter the size of the set. 

Suppose that $A'$ and $B'$ are the $i$-compressions of $A$ and $B$. We will now  show that $|A'+B'|\leq |A+B|$ and, more generally, that 
$|p_S(A'+B')|\leq |p_S(A+B)|$ for any $S \subseteq [d]$. If $i\not\in S$, then $p_S(A'+B')=p_S(A+B)$. We may therefore assume that $i\in S$. Let $T=S\setminus \{i\}$. For a set $C$ and $x\in p_T(C)$, denote by $(p_S(C))_x$ the set $\setcond{y\in p_S(C)}{p_T(y)=x}$. Then, for any $z\in p_T(A'+B')=p_T(A')+p_T(B')$, there is some $x\in p_T(A')=p_T(A)$ and $y\in p_T(B')=p_T(B)$ such that $x+y=z$ and $|(p_S(A'+B'))_z|=|(p_S(A'))_x|+|(p_S(B'))_y| - 1$. Hence, 
$$|(p_S(A+B))_z|\geq |(p_S(A))_x|+|(p_S(B))_y|-1=|(p_S(A'))_x|+|(p_S(B'))_y|-1=|(p_S(A'+B'))_z|.$$
Taking the sum over all $z$, we have $|p_S(A'+B')|\leq |p_S(A+B)|$, as claimed. We therefore see that if the required inequality holds for the $i$-compressions of $A$ and $B$, then it also holds for the original sets.

By repeatedly taking $i$-compressions for $i=1,\ldots,d$, we may assume that $A,B\subset \ZZ_{\geq 0}^d$. We will say that $A$ is $i$-compressed if the $i$-compression of $A$ is $A$ itself and $A$ is compressed if it is $i$-compressed for all $i$. Now, by considering the sum of the coordinates of all the points of $A$ or $B$, we see that taking the $i$-compression strictly decreases these sums unless they are already $i$-compressed. Therefore, by repeatedly taking $i$-compressions for each $i$, we may assume that $A$ and $B$ are compressed. This means that for any points $(x_1,\ldots,x_d)\in A$ and $(y_1,\ldots,y_d)$ such that $0\leq y_i\leq x_i$ for all $i$,  $(y_1,\ldots,y_d)\in A$ and similarly for $B$. 

For a point $x=(x_1,\ldots,x_d)\in \ZZ^d$, let $C_x$ be the closed cube 
$\prod_{i=1}^d [x_i-1,x_i].$
Define 
$A^*=\bigcup_{x\in A} C_x,$
a compact set with $\mu(A^*)=|A|$, and define $B^*$ similarly. Then, by the Brunn--Minkowski inequality, we have
$$\mu(A^*+B^*)\geq (|A|^{1/d}+|B|^{1/d})^d.$$
We can write $A^*+B^*$ as the union of closed cubes
$$A^*+B^*=\bigcup_{x\in A+B+\{0,-1\}^d} C_x.$$
Since $A$ and $B$ are compressed, so is $A+B$. Using this fact, we can rewrite $A^*+B^*$ as a union of closed sets with disjoint interiors in the following way. For $S\subseteq [d]$, let $P_S$ be the set of points in $\ZZ^d$ such that $p_S(P_S)=p_S(A+B)$ and the coordinates outside of $S$ are all $-1$. Notice that the $P_S$ are pairwise disjoint for each $S\subseteq [d]$ and $P_S\subseteq A+B+\set{0,-1}^d$. Furthermore, for each $x\in A+B+\set{0,-1}^d$, let $S$ be the set of coordinates of $x$ which are not $-1$. Then $x\in P_S$, so that 
$$A^*+B^*=\bigcup_{S\subseteq [d]}\bigcup_{x\in P_S} C_x.$$
In particular, $\mu(A^*+B^*)=\sum_{S\subseteq [d]} |P_S|=\sum_{S\subseteq [d]} |p_S(A+B)|$. Hence, since $p_{[d]}(A+B)=A+B$, we have 
$$\mu(A^*+B^*)= |A+B|+\sum_{I\subsetneq [d]}|p_I(A+B)|$$
and the lemma follows.
\end{proof}

Our aim now is to apply this discrete Brunn--Minkowski inequality to prove an estimate that will play an important role in the bootstrap arguments of the next two sections. For this, we will need several additional ingredients, beginning with the following classical theorem of Freiman~\cite{Fr73} (see also~\cite{Bi99}) on subsets of small doubling in torsion-free abelian groups. Given such a group $G$, a proper progression $P$ of dimension $s$ and size $L$ is a  set of the form 
$$P=\setcond{v_0+u_1v_1+\cdots+u_sv_s}{0\leq u_i<L_i  \mbox { for } 1\leq i\leq s},$$
where $L_1L_2\cdots L_s=L$, $v_0,v_1,\ldots,v_s$ are elements of $G$ and all of the sums arising in the definition of $P$ are distinct. 

\begin{thm} \label{thm:gr}
For any $K > 0$, there exist constants $C_1$ and $C_2$ such that if $A$ is a subset of a torsion-free abelian group $G$ with $|A+A|\leq K|A|$, then $A$ is contained in a proper progression of dimension $s \leq C_1$ and size $L\leq C_2|A|$.
\end{thm}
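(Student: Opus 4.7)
The plan is to adapt Ruzsa's proof of Freiman's theorem over $\ZZ$ to the torsion-free setting, roughly following the approach in \cite{Bi99}. The key preliminary observation is that any finite subset of a torsion-free abelian group $G$ generates a finitely generated torsion-free subgroup, which is isomorphic to some $\ZZ^D$. I may therefore assume without loss of generality that $G=\ZZ^D$ for some (possibly large) $D$ depending on $A$.

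The main chain of ingredients is the following: (1) Pl\"unnecke's inequality upgrades the hypothesis $|A+A|\leq K|A|$ to $|mA-nA|\leq K^{m+n}|A|$ for all $m,n\in\NN$; (2) Ruzsa's modelling lemma, applied with parameter $s=8$, produces a subset $A'\subseteq A$ with $|A'|\geq |A|/8$ and a prime $N$ bounded in terms of $K$ and $|A|$ such that $A'$ is Freiman $8$-isomorphic to a subset of $\ZZ/N\ZZ$; (3) Bogolyubov's lemma applied inside $\ZZ/N\ZZ$ shows that $2A'-2A'$ contains a Bohr set $B(\Gamma,\rho)$ with $|\Gamma|$ and $1/\rho$ bounded in terms of $K$ alone; and (4) a geometry-of-numbers argument, using Minkowski's second theorem on the lattice dual to $\Gamma$, produces a proper generalised arithmetic progression $P_0\subseteq B(\Gamma,\rho)$ of dimension at most $|\Gamma|$ and size at least $c_K N$ for some constant $c_K>0$.

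Pulling $P_0$ back through the Freiman $8$-isomorphism yields a proper progression of bounded dimension and size at least $c_K|A|$ contained in $2A-2A\subseteq G$. A final application of Ruzsa's covering lemma shows that $A$ itself is covered by a bounded number of translates of this progression, and these translates may be absorbed into the description of a larger progression of dimension bounded in terms of $K$ and size at most $C_2|A|$. The main obstacle is the last step, namely ensuring \emph{properness}: taking unions of translates of a proper progression can easily destroy the distinctness of the sums in its definition, so one has to run a further geometry-of-numbers argument on the lattice generated by the step vectors to extract a proper refinement still containing $A$ while keeping the dimension and size bounds under control. This is precisely the part of the argument carried out carefully by Bilu in \cite{Bi99}, and is what distinguishes the torsion-free statement from the slightly cleaner cyclic group version.
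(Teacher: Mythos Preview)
The paper does not prove this theorem; it is quoted as a classical result of Freiman, with \cite{Fr73} and \cite{Bi99} given as references, and used as a black box in the proof of Lemma~\ref{lem:discbm2}. There is therefore no proof in the paper to compare your proposal against.

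Your sketch is a faithful outline of the standard Ruzsa--Bilu argument and is essentially the content of the cited reference \cite{Bi99}. One small gap worth flagging: the usual statement of Ruzsa's modelling lemma is for subsets of $\ZZ$, not of $\ZZ^D$, so between your reduction to $G=\ZZ^D$ and step~(2) you should insert the observation that any finite $A\subset\ZZ^D$ admits a Freiman isomorphism of any prescribed order onto a subset of $\ZZ$ (for instance via $(x_1,\dots,x_D)\mapsto\sum_i x_i M^{i-1}$ for $M$ large). With that in place the chain (1)--(4) and the covering/properness endgame are correct, and your identification of the properness refinement as the genuinely delicate step is accurate.
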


We also need the following result of Pl\"unnecke--Ruzsa type \cite[Lemma~3.1]{KP20}.

\begin{lem} \label{lem:pr}
Let $G$ be an abelian group. If sets $A, B \subseteq G$ with $|A| = |B|$ are such that $C := A + B$ satisfies $|C| \leq K|A|$ for some $K > 0$, then $|C + C| \leq K^6 |C|$.
\end{lem}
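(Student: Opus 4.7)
The plan is to apply the Pl\"unnecke--Ruzsa inequality to bound $|C+C|=|2A+2B|$ directly and then convert this to a bound in terms of $|C|$ using the trivial inequality $|A|\le |A+B|=|C|$.

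First I would establish the auxiliary estimate $|A+A|\le K^2|A|$. Since $|A|=|B|$, the hypothesis $|A+B|\le K|A|$ also gives $|A+B|\le K|B|$, so applying the classical Pl\"unnecke inequality to the pair $(B,A)$ yields $|nA|\le K^n|B|=K^n|A|$ for every $n\ge 1$; taking $n=2$ gives $|A+A|\le K^2|A|$.

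Next I would invoke the multi-summand form of the Pl\"unnecke--Ruzsa inequality (Petridis's formulation): if $|A+B_i|\le K_i|A|$ for $i=1,\ldots,k$, then there exists a non-empty $A'\subseteq A$ with $|A'+B_1+\cdots+B_k|\le K_1\cdots K_k|A'|$. I would apply this with $k=4$, taking $B_1=B_2=A$ (each contributing a factor $K^2$ by the previous step) and $B_3=B_4=B$ (each contributing a factor $K$ by hypothesis), producing a non-empty $A'\subseteq A$ with
$$|A'+2A+2B|\le K^2\cdot K^2\cdot K\cdot K\cdot|A'|=K^6|A'|.$$

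Finally, fixing any $a_0\in A'$, the translate $a_0+2A+2B\subseteq A'+2A+2B$ has the same size as $2A+2B$, so
$$|C+C|=|2A+2B|\le |A'+2A+2B|\le K^6|A'|\le K^6|A|\le K^6|C|,$$
which is the desired bound. There is no real obstacle here beyond correctly assembling the standard ingredients; the exponent $6$ arising this way is not sharp but is comfortably sufficient for the application.
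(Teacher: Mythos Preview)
Your proof is correct. The paper does not supply its own argument for this lemma---it simply quotes it as \cite[Lemma~3.1]{KP20}---so there is no in-paper proof to compare against; your route via the multi-summand Pl\"unnecke--Petridis inequality (using $|A+A|\le K^2|A|$ from Pl\"unnecke applied to the pair $(B,A)$, and then combining two copies of $A$ and two of $B$) is a clean and standard way to reach exactly the exponent $6$ claimed.
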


Finally, we need the following technical lemma, saying that if $\cL \in\Mat_d(\QQ)$ has no non-trivial invariant subspace over $\QQ$ and $A$ is a finite subset of $\ZZ^d$ with $|A+\cL A|\leq K|A|$, then $A$ cannot be concentrated on an affine subspace.

\begin{lem} \label{lem:noplane}
Let $\cL\in\Mat_d(\QQ)$ with no non-trivial invariant subspace over $\QQ$ 
and let $A\subset \ZZ^d$ be such that $|A|=n$ and $|A+\cL A|\leq Kn$ for some $K > 0$. If $U$ is a vector subspace of $\QQ^d$ of dimension $k<d$, then every translate of $U$ contains at most $(Kn)^{1-2^{-k}}$ points of $A$.
\end{lem}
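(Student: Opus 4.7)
The plan is to induct on $k$. The base case $k=0$ is immediate: a translate of $\{0\}$ is a single point, meeting $A$ in at most one point, matching the required bound $(Kn)^{1-2^0} = 1$.

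For the inductive step, fix a $k$-dimensional subspace $U \subseteq \QQ^d$ with $1 \leq k \leq d-1$ and a translate of $U$ meeting $A$ in $m$ points. We may assume $m \geq 1$ and, after choosing a point $v$ in this intersection, write the translate as $v+U$. Setting $A' = A \cap (v+U)$ and $A'' := A' - v \subseteq U \cap \ZZ^d$, we have $|A''| = m$. Let $\pi : \QQ^d \to \QQ^d/U$ be the quotient map and consider the $\QQ$-linear map $\bar{\cL} := \pi \circ \cL|_U : U \to \QQ^d/U$. Since $\cL$ has no non-trivial invariant subspace over $\QQ$, we have $\cL U \not\subseteq U$, so $\bar{\cL}$ is non-zero and its kernel $W$ is a proper subspace of $U$, giving $\dim W \leq k-1$.

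Now $A' + \cL A' = (v + \cL v) + (A'' + \cL A'')$ is contained in $A + \cL A$, so $|A'' + \cL A''| \leq Kn$, and the strategy is to bound this from below by fibring through $\pi$. Because $A'' \subseteq U$, we have $\pi(A'' + \cL A'') = \bar{\cL}(A'')$. The fibres of $\bar{\cL}|_{A''}$ all lie in translates of $W$ and, since $\dim W \leq k-1$, the inductive hypothesis (after shifting back by $v$ to work inside $A$) bounds each such fibre by $(Kn)^{1-2^{-(k-1)}}$, yielding
\[|\bar{\cL}(A'')| \geq \frac{m}{(Kn)^{1-2^{-(k-1)}}}.\]
For each $t \in \bar{\cL}(A'')$, pick any $b'' \in A''$ with $\bar{\cL}(b'') = t$; then the shifted copy $A'' + \cL b''$ lies entirely in the fibre of $\pi$ over $t$ within $A'' + \cL A''$, so this fibre contains at least $m$ elements. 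Multiplying gives
\[|A'' + \cL A''| \geq \frac{m^2}{(Kn)^{1-2^{-(k-1)}}},\]
and combining with $|A'' + \cL A''| \leq Kn$ forces $m \leq (Kn)^{1-2^{-k}}$, completing the induction.

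The main subtlety is producing the uniform lower bound of $m$ on each non-empty fibre; the key observation is that $A''$ lies inside $U$, which $\pi$ annihilates, so once a single preimage $b''$ in a given fibre is fixed, the entire translate $\cL b'' + A''$ sits inside that fibre and contributes $m$ elements. Everything else reduces to applying the induction hypothesis to the kernel of $\bar{\cL}$, which is automatically a proper subspace of $U$ precisely because $U$ is not $\cL$-invariant.
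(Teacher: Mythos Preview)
Your proof is correct and follows essentially the same inductive strategy as the paper's, fibring the restricted sumset to obtain $m^2 \leq (Kn)^{2-2^{1-k}}$. The only cosmetic differences are that you start the induction at $k=0$ rather than $k=1$, and you slice via the kernel $W=\{u\in U:\cL u\in U\}$ and the quotient map $\pi$, whereas the paper slices via $V=\cL U\cap U$; since $\cL W=V$, these are two sides of the same argument.
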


\begin{proof}
The fact that $\cL$ has no non-trivial invariant subspace implies that $\cL$ is invertible (over $\QQ$). 
We prove the lemma by induction on $k$. For $k=1$, let $U_1$ be a 1-dimensional subspace. Then,  
since $\cL$ is invertible, $\cL U_1$ is a line. Furthermore, the line $\cL U_1$ is not parallel to $U_1$, since $U_1$ is not an invariant subspace of $\cL$. Thus, for any translate $U_1+u$ of $U_1$, $|(U_1+u)\cap A|^2 = |((U_1+u)\cap A)+\cL((U_1+u)\cap A)|\leq Kn$, so $|(U_1+u)\cap A|\leq (Kn)^{1/2}$. This proves the base case of our induction.

For $1<k<d$, let $U_k$ be a subspace of dimension $k$. Then $\cL U_k \neq U_k$ since $\cL$ has no non-trivial invariant subspace, 
so $V=\cL U_k \cap U_k$ is a subspace of dimension strictly smaller than $k$. Let $U_k+u$ be a translate of $U_k$ with $|(U_k+u)\cap A|=m$. Suppose $r$ translates of $V$ are required to cover $(U_k+u)\cap A$. Note that for any collection of translates $V'$ of $V$, the affine subspaces $V'+\cL(U_k+u)$ are translates of $\cL U_k$ and are disjoint. Thus, $Kn\geq |((U_k+u)\cap A)+\cL((U_k+u)\cap A)|\geq mr$. On the other hand, each translate of $V$ intersects $A$ in at most $(Kn)^{1-2^{1-k}}$ points by the induction hypothesis. Thus, $m\leq r(Kn)^{1-2^{1-k}}$. Using $mr \leq Kn$, it follows that $m^2\leq (Kn)^{2-2^{1-k}}$, so $m\leq (Kn)^{1-2^{-k}}$, as desired.
\end{proof}

We now come to our application of Lemma~\ref{lem:discbm}.

\begin{lem} \label{lem:discbm2}
Let $\cL\in\Mat_d(\QQ)$ with no non-trivial invariant subspace over $\QQ$ 
and let $A\subset \ZZ^d$ be such that $|A+\cL A|\leq K|A|$ for some $K>0$. Then there are constants $D,\sigma>0$ depending only on $d$ and $K$ such that, for any $B_1\subseteq A$, $B_2\subseteq \cL A$, 
$$|B_1+B_2|\geq \paren{|B_1|^{1/d}+|B_2|^{1/d}}^d-D|A|^{1-\sigma}.$$
\end{lem}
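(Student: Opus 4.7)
The plan is to leverage the small-doubling hypothesis to embed $A+\cL A$ in a highly structured ambient set via Pl\"unnecke--Ruzsa and Freiman, and then apply the discrete Brunn--Minkowski inequality of Lemma~\ref{lem:discbm} with a basis carefully extracted from that structure. To set things up, note that $\cL$ is invertible (having no non-trivial invariant subspace), so $|\cL A|=|A|$; applying Lemma~\ref{lem:pr} to $A$ and $\cL A$ yields $|C+C|\leq K^6|C|$ for $C:=A+\cL A$, after which Theorem~\ref{thm:gr} embeds $C$ in a proper progression
\[P=\setcond{v_0+u_1v_1+\cdots+u_sv_s}{0\leq u_i<L_i}\]
with $s\leq C_1$, generators $v_0,v_1,\ldots,v_s\in\QQ^d$, and $L:=L_1\cdots L_s\leq C_2 K|A|$, where $C_1,C_2$ depend only on $K$.

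Next I would partition the generators into \emph{large} ones with $L_i\geq |A|^{\gamma}$ (where $\gamma:=2^{-d}/C_1$) and \emph{small} ones, and show that for $|A|$ exceeding some constant depending on $K$ and $d$ the large generators must span $\QQ^d$. Indeed, if their span is a subspace $W\subseteq \QQ^d$ of dimension $r<d$, then fixing the small coordinates $u_j$ shows that $P$ is covered by at most $\prod_{j\text{ small}}L_j\leq |A|^{C_1\gamma}$ translates of $W$; since $A+\cL a_0\subseteq P$ for any fixed $a_0\in A$, the set $A$ is covered by the same number of translates of $W$. Lemma~\ref{lem:noplane} then yields
\[|A|\leq |A|^{C_1\gamma}\cdot (K|A|)^{1-2^{-r}},\]
and because $C_1\gamma=2^{-d}<2^{-r}$, this forces $|A|$ to be bounded by a constant. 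Assuming $|A|$ exceeds that constant, I then pick $d$ linearly independent large generators $v_{i_1},\ldots,v_{i_d}$ and use them as the basis $\{b_1,\ldots,b_d\}$ in Lemma~\ref{lem:discbm}.

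It remains to bound each projection $|p_I(B_1+B_2)|$ for $I\subsetneq[d]$. Since $B_1+B_2\subseteq P$, expressing each element of $P$ in the basis $\{b_k\}$ shows that the $b_k$-coordinate depends only on $u_{i_k}$ and on the $u_j$ for $j\notin\{i_1,\ldots,i_d\}$, so
\[|p_I(B_1+B_2)|\leq |p_I(P)|\leq \prod_{k\in I}L_{i_k}\cdot\prod_{j\notin\{i_1,\ldots,i_d\}}L_j=\frac{L}{\prod_{k\in[d]\setminus I}L_{i_k}}\leq C_2K|A|^{1-\gamma},\]
using $[d]\setminus I\neq\emptyset$ together with $L_{i_k}\geq |A|^\gamma$ for each chosen basis index. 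Summing over the at most $2^d$ proper subsets $I$ and substituting into Lemma~\ref{lem:discbm} yields the desired inequality with $\sigma=\gamma$ and a suitable constant $D$.

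The main obstacle is the spanning step, which is precisely where the hypothesis that $\cL$ has no non-trivial invariant subspace enters, via Lemma~\ref{lem:noplane}. Without that input, the generators of the Freiman progression could conceivably all be small in the directions outside some proper subspace, in which case no choice of basis extracted from them would force a polynomial gain in the projection bound; controlling this possibility via the no-plane lemma is what keeps the argument quantitative.
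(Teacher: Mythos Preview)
Your proof is correct and follows the same overall strategy as the paper: Pl\"unnecke--Ruzsa and Freiman embed $A+\cL A$ in a progression, Lemma~\ref{lem:noplane} forces the generators to span $\QQ^d$, and Lemma~\ref{lem:discbm} applied in a basis of large generators finishes. Two tactical differences are worth noting. First, the paper applies Lemma~\ref{lem:noplane} to $A+\cL A$ rather than to $A$, and to do so it must first bound $|A+\cL A+\cL(A+\cL A)|$ via Ruzsa's triangle inequality; your observation that $A+\cL a_0\subseteq P$ lets you apply Lemma~\ref{lem:noplane} directly to $A$ and skip that step, which is a mild simplification. Second, the paper orders the generators by decreasing $L_i$ and greedily selects the first $v_i$ outside the span of those already chosen, whereas you threshold at $|A|^\gamma$ and then pick any $d$ independent large generators; these two basis-extraction schemes are interchangeable here and lead to the same projection bound $|p_I(P)|\leq L/\min_k L_{i_k}$.
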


\begin{proof}
Let $n=|A|$. By Lemma~\ref{lem:pr}, $|A+\cL A+A+\cL A|\leq K^6|A+\cL A|\leq K_1n$, where $K_1=K^7$. 
We also claim that 
$$|A+\cL A+\cL(A+\cL A)|\leq K_2 n,$$
where $K_2=K_1^2$. To see this, first note that $|A+\cL A+\cL A|\leq |A+\cL A+A+\cL A|\leq K_1n$ and $|\cL A+\cL A+\cL^2 A|=|A+A+\cL A|\leq |A+\cL A+A+\cL A|\leq K_1n$. By applying the sum version of Ruzsa's triangle inequality~\cite{R96}, which states that 
$$|A_1||A_2+A_3|\leq |A_1+A_2||A_1+A_3|$$
for any finite subsets $A_1,A_2,A_3$ of an abelian group, to the sets $A_1=\cL A,A_2=A+\cL A,A_3=\cL A+\cL^2 A$, we have
$$n|A+\cL A+\cL A+\cL^2 A|\leq |A+\cL A+\cL A||\cL A+\cL A+\cL^2 A|\leq K_1^2 n^2.$$
Thus, $|A+\cL A+\cL(A+\cL A)|\leq K_1^2n$, as claimed.

Since $|A+\cL A+A+\cL A|\le K_1n$, we can apply Theorem~\ref{thm:gr} to conclude that $A+\cL A$ is contained in a proper progression
$$P=\setcond{v_0+u_1v_1+\cdots+u_s v_s}{0\leq u_i<L_i \mbox { for } 1\leq i\leq s},$$
where $s\leq K_3$, $L_1\geq L_2\geq\cdots\geq L_s$ and $L_1L_2\cdots L_s\leq K_4n$ for some $K_3,K_4$ depending only on $K$. Note that $P$ cannot be contained in a hyperplane, since otherwise it would contradict Lemma~\ref{lem:noplane}.

Let $i_1=1$ and, for $j=2,\ldots,d$, set $i_j$ to be the smallest number such that $v_{i_j}$ does not lie in the span of $v_{i_1},\ldots,v_{i_{j-1}}$. Then $v_{i_1},\ldots,v_{i_d}$ forms a basis of $\RR^d$. By applying Lemma~\ref{lem:discbm} with this basis, we get that
\begin{align*}
    |B_1+B_2| &\geq \paren{|B_1|^{1/d}+|B_2|^{1/d}}^d-\sum_{I\subsetneq [d]}|p_I(B_1+B_2)|\\
    &\geq \paren{|B_1|^{1/d}+|B_2|^{1/d}}^d-2^d(|p_1(B_1+B_2)|+\cdots+|p_d(B_1+B_2)|),
\end{align*}
where $p_j=p_{[d]\setminus\{j\}}$, the projection along the basis element $v_{i_j}$. Hence, it suffices to show that there is some $\sigma>0$ such that $|p_j(A+\cL A)|=O(n^{1-\sigma})$ for all $j$. 

Note that $|p_j(A+\cL A)|\leq L_1\cdots L_s/L_{i_j}\leq K_4n/L_{i_j}$. Let $H$ be the span of $v_1,v_2,\ldots,v_{i_j-1}$, which is a proper subspace. 
Using the claim that $|A+\cL A+\cL(A+\cL A)|\leq K_2n$, we can apply Lemma~\ref{lem:noplane} with $A$ replaced by $A+\cL A$ to conclude that each translate of $H$ contains at most $(K_2n)^{1-2^{1-d}}$ points of $A+\cL A$. But $P$ is covered by $L_{i_j}L_{i_j+1}\cdots L_s$ translates of $H$. Hence,
$$L_{i_j}L_{i_j+1}\cdots L_s\geq n/(K_2n)^{1-2^{1-d}}=K_5n^{2^{1-d}},$$
where $K_5=K_2^{2^{1-d}-1}$. Since $L_{i_j}\geq L_{i_j+1}\geq \cdots\geq L_s$, we have
$$L_{i_j}\geq K_5^{1/s}n^{2^{1-d}/s}\geq K_6n^{2^{1-d}/K_3},$$
where $K_6=K_5^{1/K_3}$. Thus, 
$$|p_j(A+\cL A)|\leq K_4n/L_{i_j}\leq \frac{K_4}{K_6} n^{1-2^{1-d}/K_3}.$$
The result therefore follows by taking $\sigma=2^{1-d}/K_3$ and $D=2^d dK_4/K_6$.
\end{proof}

\section{Bounding $A+\cL A$} \label{sec:iden}

As promised, we will first prove our main result in the special case where one of the transformations is the identity. Like the  general case, we will do this by proving a bootstrapping lemma which allows us to successively obtain better and better bounds, approaching the optimal one. We start with a weaker version of this bootstrapping lemma.

Both here and in what follows, we will make extensive use of the fact that if $\cL$ is not singular, then $\cL\ZZ^d$ has index $k=|\det\cL|$ in $\ZZ^d$. Indeed, this can be seen by considering the Smith normal form  $\cL=SDT$, where $S,T\in \Mat_d(\ZZ)$ are invertible over $\ZZ$ 
and $D\in \Mat_d(\ZZ)$ is diagonal. Then the index satisfies
\[    [\ZZ^d:\cL\ZZ^d] =[S^{-1}\ZZ^d:DT\ZZ^d]
    =[\ZZ^d:D\ZZ^d]
    =|\det D|=|\det \cL|.
\]

\begin{lem} \label{lem:bootstrap1}
Let $\cL\in \Mat_d(\ZZ)$ have no non-trivial invariant subspace over $\QQ$
and $k=|\det \cL|$. Then there are constants $\sigma_1>0$ and $D>0$ depending only on $d$ and $k$ 
such that the following holds. Suppose that there are $0<\alpha < (1+k^{1/d})^d$ and $D_1>0$ such that
$$|A+\cL A|\geq ((1+k^{1/d})^d-\alpha)|A|-D_1|A|^{1-\sigma_1}$$
holds for all finite $A\subset \ZZ^d$. Let $I_1,\ldots,I_k$ be the cosets of $\cL\ZZ^d$ in $\ZZ^d$ and let $A_i=A\cap I_i$ for $i=1,\ldots,k$. If there is some $j$ for which $0<|A_j|\leq |A|/k$, then
$$|A+\cL A| \geq \paren{(1+k^{1/d})^d-\max\paren{\alpha-1,\frac{k-1}{k}\alpha}}|A|-(D+(k-1)D_1)|A|^{1-\sigma_1}$$
holds for all finite $A\subset \ZZ^d$.
\end{lem}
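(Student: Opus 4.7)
The plan is to extract an improvement from the small coset $A_j$ by applying the hypothesis to the other cosets individually and applying Lemma~\ref{lem:discbm2} to $A_j+\cL A$.

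First, I would note that we may assume $|A+\cL A|\leq (1+k^{1/d})^d|A|$, since otherwise the conclusion is trivial (as $\alpha'\geq 0$). Under this assumption, Lemma~\ref{lem:discbm2} supplies constants $D$, $\sigma_1>0$ depending only on $d$ and $k$ such that $|B_1+B_2|\geq (|B_1|^{1/d}+|B_2|^{1/d})^d - D|A|^{1-\sigma_1}$ for any $B_1\subseteq A$ and $B_2\subseteq \cL A$. These are the $D$ and $\sigma_1$ that will appear in the conclusion.

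The key structural observation is that $\cL A_l\subseteq \cL\ZZ^d$ for every $l$, so each sumset $A_i+\cL A$ sits in the single coset $I_i$, whence the pieces are pairwise disjoint and $|A+\cL A|=\sum_{i=1}^k|A_i+\cL A|$. For each $i\neq j$, I would write $A_i=x_i+\cL\tilde A_i$ with $\tilde A_i\subseteq\ZZ^d$ and note that $A_i+\cL A_i$ is an affine image of $\tilde A_i+\cL\tilde A_i$ of the same cardinality, so applying the hypothesis to $\tilde A_i$ gives
$$|A_i+\cL A|\geq |A_i+\cL A_i|\geq \bigl((1+k^{1/d})^d-\alpha\bigr)|A_i|-D_1|A_i|^{1-\sigma_1}.$$
For $i=j$, I would instead apply Lemma~\ref{lem:discbm2} directly with $B_1=A_j$, $B_2=\cL A$ to obtain $|A_j+\cL A|\geq (|A_j|^{1/d}+|A|^{1/d})^d - D|A|^{1-\sigma_1}$, using $|\cL A|=|A|$ (non-singularity of $\cL$ is forced by the irreducibility hypothesis).

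Summing and setting $t=|A_j|/|A|\in(0,1/k]$ and $m=|A|$ yields
$$|A+\cL A|\geq \Bigl[\bigl((1+k^{1/d})^d-\alpha\bigr)(1-t)+(1+t^{1/d})^d\Bigr]m-\bigl(D+(k-1)D_1\bigr)m^{1-\sigma_1}.$$
Comparing with the target bound, it suffices to verify
$$h(t):=\alpha+t\bigl((1+k^{1/d})^d-\alpha\bigr)-(1+t^{1/d})^d\;\leq\;\max\!\bigl(\alpha-1,\tfrac{k-1}{k}\alpha\bigr)$$
for every $t\in[0,1/k]$. Direct computation gives $h(0)=\alpha-1$ and $h(1/k)=\tfrac{k-1}{k}\alpha$, so the inequality follows once $h$ is shown to be convex on $[0,1/k]$. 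Convexity of $h$ reduces to concavity of $t\mapsto(1+t^{1/d})^d$, which is a short second-derivative check (the second derivative is $-\tfrac{d-1}{d}(1+t^{1/d})^{d-2}t^{(1-2d)/d}<0$ for $d\geq 1$, $t>0$). This convexity step is the only real subtlety in the argument; everything else is coset bookkeeping combined with two direct applications of the input bounds.
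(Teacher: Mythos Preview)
Your proposal is correct and follows essentially the same approach as the paper: decompose $A+\cL A$ into the disjoint coset pieces $A_i+\cL A$, apply Lemma~\ref{lem:discbm2} to the small piece $A_j+\cL A$ and the hypothesis to the remaining $A_i+\cL A_i$, then optimize over $|A_j|\in(0,|A|/k]$ using concavity of $(1+t^{1/d})^d$. The only cosmetic differences are that the paper applies the hypothesis directly to $A_i$ rather than first passing to $\tilde A_i$, and that it phrases the endpoint optimization as concavity of the lower-bound expression rather than convexity of your defect function $h$.
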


\begin{proof}
Assume that $|A+\cL A|\leq (1+k^{1/d})^d|A|$. Let $D$ and $\sigma_1=\sigma$ be the constants obtained from applying Lemma~\ref{lem:discbm2} with $K = (1+k^{1/d})^d$. Then, for each $i$, we have
$$|A_i+\cL A|\geq \paren{|A_i|^{1/d}+|A|^{1/d}}^d-D|A|^{1-\sigma_1}.$$

Since $\cL A\subset \cL \ZZ^d$, we have $(A+\cL A)\cap I_i=A_i+\cL A$. Hence, 
we can write $A+\cL A$ as the disjoint union
$$A+\cL A=(A_1+\cL A)\cup\cdots \cup (A_k+\cL A).$$
Suppose, without loss of generality, that $0<|A_1|\leq |A|/k$. We shall bound $|A_1+\cL A|$ by the estimate above and the rest by
\begin{align*}
|A_i+\cL A| &\geq |A_i+\cL A_i|\geq ((1+k^{1/d})^d-\alpha)|A_i|-D_1|A_i|^{1-\sigma_1}\\
&\geq ((1+k^{1/d})^d-\alpha)|A_i|-D_1|A|^{1-\sigma_1}
\end{align*}
for $i=2,\ldots,k$. Combining these estimates, we have
\begin{align*}
|A+\cL A| &\geq |A_1+\cL A|+|A_2+\cL A|+\dots +|A_k+\cL A|\\
&\geq \paren{|A_1|^{1/d}+|A|^{1/d}}^d+((1+k^{1/d})^d-\alpha)\sum_{i=2}^k |A_i|-(D+(k-1)D_1)|A|^{1-\sigma_1}\\
&= \paren{|A_1|^{1/d}+|A|^{1/d}}^d+((1+k^{1/d})^d-\alpha)(|A|-|A_1|)-(D+(k-1)D_1)|A|^{1-\sigma_1}.
\end{align*}
This last expression is concave in terms of $|A_1|$, which can be seen by expanding the binomial term and noting that each term in the binomial sum is concave. Hence, it is minimised when $|A_1|=0$ or $|A_1|=|A|/k$.

In the first case, where  the minimum is when $|A_1|=0$, we have
$$|A+\cL A|\geq ((1+k^{1/d})^d-(\alpha-1))|A|-(D+(k-1)D_1)|A|^{1-\sigma_1}.$$
In the second case, where  the minimum is when $|A_1|=|A|/k$, we have 
\begin{align*}
|A+\cL A| &\geq \paren{(1+k^{1/d})^d-\frac{k-1}{k}\alpha}|A|-(D+(k-1)D_1)|A|^{1-\sigma_1}.
\end{align*}
In either case, we have
$$|A+\cL A| \geq \paren{(1+k^{1/d})^d-\max\paren{\alpha-1,\frac{k-1}{k}\alpha}}|A|-(D+(k-1)D_1)|A|^{1-\sigma_1},$$ 
as required.
\end{proof}

This lemma shows that bootstrapping works if each of the $k$ cosets $A_i$ of $A$ are non-empty. To show that a similar result holds in general, we split each of the cosets $A_i$ into smaller cosets $A_{ij}$. There are then three cases: if $A$ is contained in some smaller sublattice, then we can rescale $A$, which will contradict a certain minimality assumption; if $A+\cL A$ contains cosets that are distinct from all the $A_{ij}+\cL A_{ij}$, then this  additional coset boosts the bound; and, finally, if any of the $A_i$ splits into $k$ non-empty cosets, we can again apply the lemma above. 
The following lemma will allow us to show that one of these three cases must hold.

\begin{lem} \label{lem:tri}
Let $\cL\in\Mat_d(\ZZ)$ be a linear transformation that is invertible over $\QQ$. Let $X$ be a subset of the finite abelian group $G=\ZZ^d/\cL^2\ZZ^d$ containing 0 and let $H$ be the subgroup $\cL\ZZ^d/\cL^2\ZZ^d$ of $G$. Notice that $\cL$ naturally induces a map $G\to G$. Then at least one of the following holds:
\begin{enumerate}
    \item $X+H$ does not generate $G$;
    \item $X+\cL X\supsetneq X$ (note that $X+\cL X\supseteq X$ always holds);
    \item $H\subseteq X$.
\end{enumerate}
\end{lem}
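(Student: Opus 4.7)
The plan is to prove the contrapositive: assume that conditions (1) and (3) both fail, so that $X+H$ generates $G$ and $H\not\subseteq X$, and then derive condition (2), i.e., $X+\cL X\supsetneq X$.

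First, I would unpack the group theory. Since $\cL$ is invertible over $\QQ$, the index $[\ZZ^d:\cL\ZZ^d]$ equals $k=|\det\cL|$, so $|G|=k^2$ and $|H|=k=[G:H]$. The induced map $\cL:G\to G$ satisfies $\cL(G)=\cL\ZZ^d/\cL^2\ZZ^d=H$ and $\cL(H)=\cL^2\ZZ^d/\cL^2\ZZ^d=0$; comparing orders, the kernel is exactly $H$, and $\cL$ factors through an isomorphism $\bar\cL:G/H\xrightarrow{\sim}H$. In particular, $\cL X\subseteq H$ for every subset $X\subseteq G$.

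The key observation is that if $X+\cL X=X$ (i.e., condition (2) fails) and $0\in X$, then every element of $\cL X$ stabilises $X$. Indeed, for $y\in\cL X$, we have $X+y\subseteq X+\cL X=X$, and since $|X+y|=|X|$ this forces $X+y=X$. Hence $\cL X\subseteq S$, where $S:=\mathrm{Stab}(X)=\{g\in G:X+g=X\}$ is a subgroup of $G$, and therefore $\langle \cL X\rangle\subseteq S$. Using that $\cL$ is a homomorphism, $\langle \cL X\rangle=\cL\langle X\rangle$.

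To close the argument, I would exploit that $X+H$ generating $G$ is equivalent to $\langle X\rangle+H=G$, which in turn is equivalent to $\langle X\rangle$ surjecting onto $G/H$. Applying $\cL$ and using the isomorphism $\bar\cL:G/H\to H$, this surjectivity transfers to $\cL\langle X\rangle=H$. Combined with the previous step, this gives $H=\langle\cL X\rangle\subseteq S$, so $X+H=X$. Since $0\in X$, this forces $H\subseteq X$, contradicting the failure of condition (3). The only genuine subtlety, and where I would be most careful, is keeping straight the interplay between $\cL$ as a homomorphism of $G$ and the two distinct roles of $H$ as both the image and the kernel of $\cL$; everything else is routine manipulation of cosets and stabilisers.
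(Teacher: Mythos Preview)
Your argument is correct, and it reaches the same conclusion as the paper's proof but via a somewhat different auxiliary object. The paper also argues by contradiction, assuming all three conditions fail, but instead of the stabiliser $S=\mathrm{Stab}(X)$ it works with the preimage $L=\{v\in G:\cL v\in X\}$. From $X+\cL X=X$ the paper deduces that $L$ is closed under adding elements of $X$; since $H\subseteq L$ (as $\cL H=0\in X$) and $H\cup X$ generates $G$, an easy induction gives $L=G$, whence $H=\cL G\subseteq X$. Your route is a touch more structural: you observe directly that $\cL X$ lies in the stabiliser subgroup, pass to $\langle\cL X\rangle=\cL\langle X\rangle$, and then use the induced isomorphism $\bar\cL:G/H\to H$ together with $\langle X\rangle+H=G$ to identify $\cL\langle X\rangle$ with all of $H$. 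Both arguments are short; yours packages the ``building up'' step into the single identity $\cL\langle X\rangle=H$, while the paper's avoids any appeal to the first isomorphism theorem and proceeds by a bare-hands induction.
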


\begin{proof}
Suppose all 3 do not hold. Let $L=\setcond{v\in G}{\cL v\in X}$. Since $0\in X$, we have $H\subseteq L$. For any $v\in L$ and $a\in X$, we have $\cL v+\cL a\in X+\cL X=X$, so $v+a\in L$. 
Since $H+X$ generates $G$, for any $b\in G$, there are $h\in H$ and $a_1,\ldots,a_k\in X$ for some $k$ such that $b=h+a_1+\cdots+a_k$. If $h+a_1+\cdots+a_i\in L$ for some $i$, then $(h+a_1+\cdots+a_i)+a_{i+1}\in L$, so, by the fact that $h\in L$ and a simple induction, we have that $b=h+a_1+\cdots+a_k\in L$. Thus, $L=G$, which implies that $H\subseteq X$, a contradiction.
\end{proof}

We are now ready for our main bootstrapping lemma.

\begin{lem} \label{lem:bootstrap2}
Let $d$ and $k$ be positive integers. Then there are constants $\sigma_1>0$ and $D>0$ depending only on $d$ and $k$ such that the following holds. Suppose that there are $0<\alpha < (1+k^{1/d})^d$ and $D_1>0$ such that 
$$|A+\cL A|\geq ((1+k^{1/d})^d-\alpha)|A|-D_1|A|^{1-\sigma_1}$$
holds for all finite $A\subset \ZZ^d$ and all $\cL\in \Mat_d(\ZZ)$ with no non-trivial invariant subspace over $\QQ$ and $k=|\det \cL|$. Then
$$|A+\cL A|\geq \paren{(1+k^{1/d})^d-\max\paren{\alpha-\frac{1}{k^2},\frac{k^2-1}{k^2}\alpha}}|A|-(D+k^2D_1)|A|^{1-\sigma_1}$$
holds for all such $A$ and $\cL$.
\end{lem}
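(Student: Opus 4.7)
The plan is to follow the three-case strategy signposted in the text, handling Case~1 by a minimality argument, Case~3 by a direct application of Lemma~\ref{lem:bootstrap1}, and Case~2 by a separate ``extra coset'' analysis.

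First I would reduce to a convenient form. Assume $|A+\cL A|\leq (1+k^{1/d})^d|A|$ (otherwise the bound is immediate) and partition $A = \bigsqcup_i A_i$ by the cosets $I_i$ of $\cL\ZZ^d$ in $\ZZ^d$. If some $|A_j|\in(0,|A|/k]$, then Lemma~\ref{lem:bootstrap1} already delivers an improvement of $\min(1,\alpha/k)|A|$, which strictly exceeds the $\min(1,\alpha)/k^2\cdot|A|$ improvement we need. So I may assume each $A_j$ is either empty or satisfies $|A_j|>|A|/k$; by pigeonhole fewer than $k$ of the $A_j$ are non-empty, and after translating so that $0\in A$ lies in a coset with $|A_j|$ maximal, we have $|A_1|\geq|A|/k$.

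I would then apply Lemma~\ref{lem:tri} with $X = A\bmod\cL^2\ZZ^d \subseteq G = \ZZ^d/\cL^2\ZZ^d$ and $H = \cL\ZZ^d/\cL^2\ZZ^d$. In Case~1 (where $X+H$ does not generate $G$), the lattice $\Lambda = \langle A,\cL\ZZ^d\rangle = M\ZZ^d$ is a proper $\cL$-invariant sublattice of $\ZZ^d$, and the substitution $(A,\cL)\mapsto(M^{-1}A,M^{-1}\cL M)$ preserves $|A|$, $|A+\cL A|$, and all the hypotheses; iteration terminates because the index $[\ZZ^d:\Lambda]$ is a positive integer that strictly decreases under each rescaling, so I may assume Case~2 or Case~3 holds. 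In Case~3 ($H\subseteq X$), writing $A_1 = \cL B_1$ with $B_1\subseteq\ZZ^d$, the set $B_1$ meets all $k$ cosets of $\cL\ZZ^d$, so pigeonhole gives some $|B_{1j}|\leq|B_1|/k$ and Lemma~\ref{lem:bootstrap1} applied to $B_1$ yields
\[
|A_1+\cL A_1| = |B_1+\cL B_1| \geq \bigl((1+k^{1/d})^d-\alpha'\bigr)|A_1| - (D+(k-1)D_1)|A_1|^{1-\sigma_1}
\]
with $\alpha' = \max(\alpha-1,(k-1)\alpha/k)$. Since the diagonal sums $A_{ij}+\cL A_{ij}$ live in pairwise distinct cosets $c_{ij}+\cL c_i+\cL^2\ZZ^d$ as $(i,j)$ varies, I would then bound $|A+\cL A| \geq |A_1+\cL A_1| + \sum_{i\neq 1,\,j}|A_{ij}+\cL A_{ij}|$, estimate the remaining terms via the hypothesis, and collect the gain $(\alpha-\alpha')|A_1| = \min(1,\alpha/k)|A_1| \geq \min(1,\alpha)/k^2\cdot|A|$ with total error at most $(D+k^2D_1)|A|^{1-\sigma_1}$.

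The hard part will be Case~2, where $X+\cL X\supsetneq X$ but Case~3 fails. Then there is a coset $J^*$ of $\cL^2\ZZ^d$ meeting $A+\cL A$ but disjoint from $A$, arising from non-empty parts $A_{i^*j}$ and $A_{i'}$ with $J_{i^*j}+h_{i'}=J^*$. The key observation I would use is that $(A+\cL A)\cap J^* \supseteq A_{i^*j}+\cL A_{i'}$, whose size is at least $|A_{i'}|>|A|/k$. If $J^*$ is not a diagonal coset $c_{i''j''}+\cL c_{i''}+\cL^2\ZZ^d$ for any non-empty $A_{i''j''}$, then this lump is disjoint from every diagonal sum and directly supplies the improvement of $|A|/k \geq \min(1,\alpha)/k^2\cdot|A|$. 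Otherwise $J^*$ coincides with the diagonal coset for some non-empty $A_{i^*j_\#}$, and the delicate step will be to account for the overlap between $A_{i^*j}+\cL A_{i'}$ and the diagonal summand $A_{i^*j_\#}+\cL A_{i^*j_\#}$ sitting inside $J^*$; this overlap bookkeeping is where I expect the main work to go. Combining all cases then yields the stated bound.
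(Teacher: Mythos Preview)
Your overall architecture matches the paper's: the minimality reduction for Case~1, the trichotomy from Lemma~\ref{lem:tri}, and the use of Lemma~\ref{lem:bootstrap1} on the rescaled piece $\cL^{-1}A_1$ are all exactly right. Your handling of Case~3 is correct and is essentially how the paper treats the situation where some $0<|A_{ij}|\le |A_i|/k$.

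The gap is in Case~2. Your choice of $J^*$ as a coset in $(X+\cL X)\setminus X$ can genuinely coincide with a diagonal coset $J_{i''j''}+h_{i''}$, and the ``overlap bookkeeping'' you flag is not easily resolved along those lines. The fix is to choose a different extra coset. You have already observed that the diagonal cosets $J_{ij}+h_i$ are pairwise distinct as $(i,j)$ ranges over $I$, so there are exactly $|I|$ of them; on the other hand $|X+\cL X|>|X|=|I|$. Hence there exists a coset $J'\in X+\cL X$ which is \emph{not} diagonal. Writing $J'=J_{i_1j_1}+h_{i_2}$ with $(i_1,j_1)\in I$ and $A_{i_2}\neq\emptyset$, you get
\[
(A+\cL A)\cap J' \;\supseteq\; A_{i_1j_1}+\cL A_{i_2},
\]
which is disjoint from every diagonal summand and has size at least $|A_{i_2}|>|A|/k$. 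No overlap analysis is needed. (The paper organises this slightly differently: it first disposes of the case $0<|A_{ij}|\le|A_i|/k$ by the argument you use in Case~3, so that every nonempty $A_{ij}$ has size exceeding $|A|/k^2$; Case~3 then becomes vacuous and in Case~2 the extra coset contributes at least $|A_{i_1j_1}|>|A|/k^2$. Either ordering works.)
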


\begin{proof}
Take $\sigma_1,D$ as in Lemma \ref{lem:bootstrap1}. By translating $A$, we may assume that $0\in A$. We may also assume that $|A+\cL A|\leq (1+k^{1/d})^d|A|$, so that, by Lemma~\ref{lem:noplane}, $A$ does not lie on a hyperplane. Let $\ang{A}$ denote the $\ZZ$-span of $A$, which is a $d$-dimensional sublattice of $\ZZ^d$. Suppose the lemma does not hold and pick a counterexample $(A,\cL)$ such that $\ang{A}$ has minimum index in $\ZZ^d$. 

Let $v_1=0,v_2,\ldots,v_k$ be coset representatives of $\cL \ZZ^d$ over $\ZZ^d$. For $i,j=1,\ldots,k$, let $A_i=A\cap (v_i+\cL\ZZ^d)$ and $A_{ij}=A\cap (v_i+\cL v_j+\cL^2\ZZ^d)$. Then the $A_{ij}$ partition $A_i$ and the $A_i$ partition $A$. If there is some $i$ for which $0<|A_i|\leq |A|/k$, then we are done by Lemma~\ref{lem:bootstrap1}. Hence, we may assume that either $A_i=\emptyset$ or $|A_i|>|A|/k$ for every $i$. 

If there is some $i,j$ such that $A_i\neq\emptyset$ and $0<|A_{ij}|\leq |A_i|/k$, then let $A'=\cL^{-1}(A_i-v_i)=\cL^{-1}(A-v_i)\cap \ZZ^d\subseteq \ZZ^d$. For each $l =1,\dots,k$, let $A_l'=\cL^{-1}(A_{il}-v_i)=\cL^{-1}(A-v_i)\cap (v_l+\cL\ZZ^d)$. Thus, $A_l'=A'\cap (v_l+\cL\ZZ^d)$. Hence, applying Lemma~\ref{lem:bootstrap1} with $A$ and $A_j$ replaced by $A'$ and $A_j'$, we have 
\begin{align*}
    |A_i+\cL A_i| &= |A'+\cL A'|\\
    &\geq \paren{(1+k^{1/d})^d-\max\paren{\alpha-1,\frac{k-1}{k}\alpha}}|A_i|-(D+(k-1)D_1)|A_i|^{1-\sigma_1}.
\end{align*}
Using the fact that $|A+\cL A|\geq \sum_{l=1}^k |A_l+\cL A_l|$ and $|A_i|\geq |A|/k$, we have 
\begin{align*}
|A+\cL A| &\geq \sum_{l\neq i} |A_l+\cL A_l|+|A_i+\cL A_i|\\
&\geq \paren{(1+k^{1/d})^d-\alpha}\sum_{l\neq i}|A_l|-(k-1)D_1|A|^{1-\sigma_1}\\
&\quad +\paren{(1+k^{1/d})^d-\max\paren{\alpha-1,\frac{k-1}{k}\alpha}}|A_i|-(D+(k-1)D_1)|A|^{1-\sigma_1}\\
&= \paren{(1+k^{1/d})^d-\alpha}|A|+\min\paren{1,\frac{\alpha}{k}}|A_i|-(D+2(k-1)D_1)|A|^{1-\sigma_1}\\
&\geq \paren{(1+k^{1/d})^d-\alpha}|A|+\min\paren{\frac{1}{k},\frac{\alpha}{k^2}}|A|-(D+2(k-1)D_1)|A|^{1-\sigma_1}\\
&\geq \paren{(1+k^{1/d})^d-\max\paren{\alpha-\frac{1}{k},\frac{k^2-1}{k^2}\alpha}}|A|-(D+k^2D_1)|A|^{1-\sigma_1}.
\end{align*}
Hence, we may assume that, for all $i,j$, either $A_{ij}=\emptyset$ or $|A_{ij}|> |A_i|/k> |A|/k^2$. This assumption will be crucial in many of the estimates that follow.

Let $X$ be the image of $A$ in $G=\ZZ^d/\cL^2\ZZ^d$ and let $H=\cL\ZZ^d/\cL^2\ZZ^d\subseteq G$. Applying Lemma \ref{lem:tri} to $X$, we have the following 3 cases:\\

\noindent
\underline{Case 1: $X+H$ does not generate $G$}

\noindent
Let $E\subset \ZZ^d$ be the lattice that is the preimage of the subgroup of $G$ generated by $X+H$  with respect to the quotient map $q:\ZZ^d\to \ZZ^d/\cL^2\ZZ^d=G$. In other words, $E=\ang{A}+\cL\ZZ^d$. Since $A$ does not lie on a hyperplane, $E$ is $d$-dimensional and, since $X+H$ does not generate $G$, $E\neq \ZZ^d$. Consider a linear transformation $\cP\in\Mat_d(\ZZ)$ such that $\cP\ZZ^d=E$, so that $|\det \cP|>1$. Then
$$|A+\cL A|=|\cP\cP^{-1} A+\cP\cP^{-1}\cL\cP\cP^{-1} A|=|\cP^{-1} A+(\cP^{-1}\cL\cP)(\cP^{-1} A)|.$$
Since $\cL E\subset \cL\ZZ^d=q^{-1}(H)\subseteq E$, we have
$$\cP^{-1}\cL\cP \ZZ^d=\cP^{-1}\cL E\subset \cP^{-1}E=\ZZ^d,$$
so that $\cP^{-1}\cL\cP\in \Mat_d(\ZZ)$ and $|\det \cP^{-1}\cL\cP|=|\det \cP|^{-1}|\det\cL||\det\cP|=k$. Now replace $A$ by $\cP^{-1} A\subset \ZZ^d$ and $\cL$ by $\cP^{-1}\cL\cP$. But then the index of $\ang{\cP^{-1}A}$ is
$$[\ZZ^d:\ang{\cP^{-1}A}]=[\cP\ZZ^d:\ang{A}]=[\ZZ^d:\ang{A}]/[\ZZ^d:\cP\ZZ^d]=|\det \cP|^{-1}[\ZZ^d:\ang{A}].$$
Thus, $\ang{\cP^{-1}A}$ has strictly smaller index than $\ang{A}$, so the pair $(\cP^{-1}A,\cP^{-1}\cL\cP)$ contradicts the minimality of the pair $(A,\cL)$.\\

\noindent
\underline{Case 2: $X+\cL X\supsetneq X$}

\noindent
This case is saying that $A+\cL A$ intersects  strictly more cosets of $\cL^2\ZZ^d$ than $A$, so we can exploit the extra cosets to obtain a better lower bound. Let $I=\setcond{(i,j)\in [k]^2}{A_{ij}\neq\emptyset}$. Suppose $(i,j),(i',j')\in I$ are distinct pairs. We claim that $A_{ij}+\cL A_{ij}$ and $A_{i'j'}+\cL A_{i'j'}$ belong to different cosets of $\cL^2\ZZ^d$. Indeed, suppose they belong to the same coset. $A_{ij}+\cL A_{ij}$ belongs to the coset $v_i+\cL v_j+\cL v_i+\cL^2 \ZZ^d$, while $A_{i'j'}+\cL A_{i'j'}\subset v_{i'}+\cL v_{j'}+\cL v_{i'}+\cL^2 \ZZ^d$. So if they belong to the same coset, we must have $i=i'$ and $j=j'$. Now, since $A+\cL A$ intersects more than $|I|$ cosets, there are $(i_1,j_1),(i_2,j_2)\in I$ such that $A_{i_1j_1}+\cL A_{i_2j_2}$ belongs to a coset different from $A_{ij}+\cL A_{ij}$ for all $(i,j)\in I$. Since $A_{i_1j_1}$ is non-empty, $|A_{i_1j_1}|\geq |A|/k^2$, so we have
\begin{align*}
|A+\cL A| &\geq \sum_{(i,j)\in I}|A_{ij}+\cL A_{ij}|+|A_{i_1j_1}+\cL A_{i_2j_2}|\\
&\geq ((1+k^{1/d})^d-\alpha)|A|-k^2D_1|A|^{1-\sigma_1}+|A_{i_1j_1}|\\
&\geq ((1+k^{1/d})^d-\alpha)|A|-k^2D_1|A|^{1-\sigma_1}+\frac{1}{k^2}|A|\\
&= ((1+k^{1/d})^d-(\alpha-1/k^2))|A|-k^2D_1|A|^{1-\sigma_1}.
\end{align*}\\

\noindent
\underline{Case 3: $H\subseteq X$}

\noindent
In this case, $A_{1j}\neq\emptyset$ for $j=1,\ldots,k$. But, since the $A_{1j}$ partition $A_1$, there is then some $j$ for which $|A_{1j}|\leq |A_1|/k$, contradicting our assumption. This completes the proof of the lemma.
\end{proof}

It is now a simple matter to complete the proof of Theorem~\ref{thm:main} in the special case where $\cL_1$ is the identity.

\begin{thm} \label{thm:main1}
Let $\cL\in \Mat_d(\ZZ)$ be a linear transformation with no non-trivial invariant subspace over $\QQ$ and $k=|\det \cL|$. Then there are $D_2,\sigma_2>0$ depending only on $d$ and $k$
such that
$$|A+\cL A| \geq (1+k^{1/d})^d|A|-D_2|A|^{1-\sigma_2}$$
for all finite $A\subset \ZZ^d$.
\end{thm}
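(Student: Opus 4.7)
The plan is to prove Theorem~\ref{thm:main1} by iterating Lemma~\ref{lem:bootstrap2}, starting from a trivial base case and calibrating the number of iterations to the size of $A$. The trivial estimate $|A+\cL A|\geq |A|$ can be rewritten as the hypothesis of Lemma~\ref{lem:bootstrap2} with $\alpha_0 = (1+k^{1/d})^d - 1$ and $D_0 = 0$, so the lemma applies. Applying it $n$ times produces, for each fixed $n$, a bound
$$|A+\cL A| \geq \bigl((1+k^{1/d})^d - \alpha_n\bigr)|A| - D_n |A|^{1-\sigma_1},$$
valid for all $A\subset\ZZ^d$ and all $\cL$ as in the theorem, where $\alpha_{n+1} = \max(\alpha_n - 1/k^2,\,(1-1/k^2)\alpha_n)$ and $D_{n+1} = D + k^2 D_n$ with $D$ and $\sigma_1$ the constants from Lemma~\ref{lem:bootstrap2}. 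A short analysis of these recurrences shows $\alpha_n\to 0$: after a bounded number of additive steps, $\alpha_n$ drops below $1$, and thereafter $\alpha_n\leq C_1(1-1/k^2)^n$; meanwhile $D_n\leq C_2(k^2)^n$ throughout.

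For a given $A$, I would choose the number of iterations to be $n := \lfloor c\log|A|\rfloor$ for a sufficiently small positive constant $c = c(d,k,\sigma_1)$. With this choice, $\alpha_n|A| \leq C_1|A|^{1-c/k^2}$ while $D_n|A|^{1-\sigma_1} \leq C_2|A|^{1-\sigma_1+2c\log k}$. Picking $c$ small (for example, $c = \sigma_1/(4\log k)$) and setting $\sigma_2 := c/(2k^2)$ makes both exponents at most $1-\sigma_2$, yielding
$$|A+\cL A| \geq (1+k^{1/d})^d|A| - D_2|A|^{1-\sigma_2}$$
with $D_2$, $\sigma_2$ depending only on $d$ and $k$.

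The main obstacle is that the two recursion rates satisfy $k^2(1-1/k^2) = k^2-1 > 1$ for $k\geq 2$, so the product of the growth rate of $D_n$ and the decay rate of $\alpha_n$ exceeds one. We therefore cannot drive both error terms to bounded constants simultaneously by running the iteration far. Instead, we exploit the existing polynomial slack in $|A|^{1-\sigma_1}$ and aim only to shrink $\alpha_n$ to a polynomial $|A|^{-\sigma_2}$; this takes only $O(\log|A|)$ iterations, which keeps $D_n$ polynomially bounded in $|A|$, and the resulting exponent $\sigma_2$ is of order $\sigma_1/(k^2\log k)$.
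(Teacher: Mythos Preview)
Your proposal is correct and follows essentially the same strategy as the paper: iterate Lemma~\ref{lem:bootstrap2} roughly $c\log|A|$ times, balancing the geometric decay of $\alpha_n$ against the geometric growth of $D_n$ so that both error terms are $O(|A|^{1-\sigma_2})$. The only differences are cosmetic (the paper bounds the constant after $m$ further steps by $(k^2+1)^m D_2'$ rather than $C_2 k^{2n}$, and explicitly runs a fixed number of steps first to reach $\alpha<1$); your sample choice $c=\sigma_1/(4\log k)$ should be adjusted when $k=1$, but that case is trivial since $\alpha_n$ then reaches $0$ after finitely many steps.
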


\begin{proof}
Let $\sigma_1,D>0$ be as in Lemma~\ref{lem:bootstrap2}. Using the trivial base case $|A+\cL A|\geq |A|$ and repeatedly applying Lemma~\ref{lem:bootstrap2}, we can find some $0<\epsilon<1$ and $D_2'>D$ such that 
$$|A+\cL A|\geq ((1+k^{1/d})^d-\epsilon)|A|-D_2'|A|^{1-\sigma_1}$$
holds for all finite $A\subset\ZZ^d$.

Applying Lemma~\ref{lem:bootstrap2} $m$ more times, we have
$$|A+\cL A| \geq \paren{(1+k^{1/d})^d-\paren{\frac{k^2-1}{k^2}}^m\epsilon}|A|-(k^2+1)^mD_2'|A|^{1-\sigma_1}.$$
Taking $m=\frac{\sigma_1 \log |A|}{2\log(k^2+1)}$ (and ignoring integer rounding issues), we have
$$(k^2+1)^mD_2'|A|^{1-\sigma_1}=D_2'|A|^{1-\sigma_1/2}$$
and
\begin{align*}
\paren{\frac{k^2-1}{k^2}}^m\epsilon |A| &= \epsilon |A|^{1+\frac{\sigma_1(\log (k^2-1)-\log k^2)}{2\log (k^2+1)}}.
\end{align*}
Now, taking $\sigma_2=\min\paren{\frac{\sigma_1}{2},\frac{\sigma_1(\log k^2-\log (k^2-1))}{2\log (k^2+1)}}$, we get
$$|A+\cL A|\geq (1+k^{1/d})^d|A|-D_2|A|^{1-\sigma_2},$$
where $D_2=\epsilon+D_2'$.
\end{proof}

\section{Bounding $\cL_1 A+\cL_2 A$} \label{sec:main}

In this section, we prove our main result, our lower  bound on $|\cL_1  A + \cL_2 A|$ when $\cL_1,\cL_2\in\Mat_d(\ZZ)$ are irreducible and coprime. Note that we may assume that both $\cL_1$ and $\cL_2$ are invertible over $\QQ$. Indeed, if $\cL_1$, say, is not invertible, then there is a line $L$ such that $\cL_1 L=0$, so $\cL_1,\cL_2$ would not be irreducible. 

We first note the following elementary fact about abelian groups.

\begin{lem} \label{lem:subgrp}
Let $G$ be an abelian group and $H_1, H_2$ be subgroups of finite index such that $H_1+H_2=G$. Then
$$[G:H_1\cap H_2]=[G:H_1][G:H_2].$$
\end{lem}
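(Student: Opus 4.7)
The plan is to use the second isomorphism theorem for abelian groups together with the tower law for indices. The hypothesis $H_1 + H_2 = G$ is exactly what turns the generic inequality $[G : H_1 \cap H_2] \le [G : H_1][G : H_2]$ into an equality.

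First, I would invoke the tower law: since $H_1 \cap H_2 \subseteq H_1 \subseteq G$, we have
\[
[G : H_1 \cap H_2] \;=\; [G : H_1]\cdot [H_1 : H_1 \cap H_2].
\]
So the task reduces to showing that $[H_1 : H_1 \cap H_2] = [G : H_2]$.

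Next, I would apply the second isomorphism theorem to the subgroups $H_1$ and $H_2$ of $G$, which gives the natural isomorphism
\[
H_1/(H_1 \cap H_2) \;\cong\; (H_1 + H_2)/H_2.
\]
Using the hypothesis $H_1 + H_2 = G$, the right-hand side is just $G/H_2$. Taking cardinalities (both are finite by the finite-index assumption) yields $[H_1 : H_1 \cap H_2] = [G : H_2]$, and substituting into the tower law above gives the desired equality
\[
[G : H_1 \cap H_2] \;=\; [G : H_1]\cdot [G : H_2].
\]

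There is essentially no obstacle here; this is a short structural fact whose only content is the second isomorphism theorem plus the sum condition. The one thing to double-check is that finiteness of $[G : H_1]$ and $[G : H_2]$ implies finiteness of $[G : H_1 \cap H_2]$, which follows because the map $G/(H_1 \cap H_2) \hookrightarrow G/H_1 \times G/H_2$ sending $g + (H_1 \cap H_2)$ to $(g + H_1, g + H_2)$ is injective, so all quantities in the displayed equation are finite and the computation with cardinalities is justified.
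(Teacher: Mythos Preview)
Your proof is correct and essentially identical to the paper's: both use the second isomorphism theorem $H_1/(H_1\cap H_2)\cong (H_1+H_2)/H_2$ together with the tower law $[G:H_1\cap H_2]=[G:H_1][H_1:H_1\cap H_2]$, with the paper deriving the latter from the third isomorphism theorem rather than stating it directly. Your additional remark on finiteness via the injection into $G/H_1\times G/H_2$ is a nice extra justification not spelled out in the paper.
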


\begin{proof}
By the isomorphism theorems, we have that
$$H_1/(H_1\cap H_2)\equiv (H_1+H_2)/H_2,$$
$$G/H_1\equiv (G/(H_1\cap H_2))/(H_1/(H_1\cap H_2)).$$
Hence, $[H_1:H_1\cap H_2]=[G:H_2]$ and $[G:H_1\cap H_2]=[G:H_1][H_1:H_1\cap H_2]=[G:H_1][G:H_2]$.
\end{proof}

For the proof, we will need to introduce a number of additional linear transformations  associated with $\cL_1$ and $\cL_2$. Indeed, let $p=|\det\cL_1|$ and $q=|\det\cL_2|$. Since $\cL_1, \cL_2$ are coprime, we know that 
$\cL_1\ZZ^d+\cL_2\ZZ^d=\ZZ^d$. Thus, by Lemma~\ref{lem:subgrp} with $G = \ZZ^d$, $H_1 = \cL_1\ZZ^d$ and $H_2 = \cL_2\ZZ^d$, we have 
$$[\ZZ^d:\cL_1 \ZZ^d\cap \cL_2 \ZZ^d] = [\ZZ^d:\cL_1 \ZZ^d] [\ZZ^d:\cL_2 \ZZ^d] = pq.$$
Hence,
$$[\cL_1\ZZ^d:\cL_1 \ZZ^d\cap \cL_2 \ZZ^d]=q,\quad [\cL_2 \ZZ^d:\cL_1 \ZZ^d\cap \cL_2 \ZZ^d]=p$$
and so
$$[\ZZ^d:\ZZ^d\cap \cL_1^{-1}\cL_2 \ZZ^d]=q,\quad [\ZZ^d:\ZZ^d\cap \cL_2^{-1}\cL_1 \ZZ^d]=p.$$
We now let $\cP_1,\cP_2\in\Mat_d(\ZZ)$ be linear transformations such that $\cP_1\ZZ^d=\ZZ^d\cap \cL_2^{-1}\cL_1 \ZZ^d$ and $\cP_2\ZZ^d=\ZZ^d\cap \cL_1^{-1}\cL_2 \ZZ^d$, noting that $|\det\cP_1|=p$ and $|\det\cP_2|=q$.

As in the $A+\cL A$ case, we begin the proof proper with a weak bootstrapping lemma.

\begin{lem} \label{lem:bootstrap21}
Let $\cL_1,\cL_2\in\Mat_d(\ZZ)$ be irreducible, coprime linear transformations with $|\det\cL_1|=p$ and $|\det\cL_2|=q$.
Then there are constants $\sigma_1>0$ and $D>0$ depending only on $d$, $p$ and $q$ such that the following holds. Suppose that there are $0<\alpha<(p^{1/d}+q^{1/d})^d$ and $D_1>0$ such that 
$$|\cL_1 A+\cL_2 A|\geq ((p^{1/d}+q^{1/d})^d-\alpha)|A|-D_1|A|^{1-\sigma_1}$$
holds for all finite $A\subset \ZZ^d$. Let $I_1,\ldots,I_p$ be the cosets of $\cP_1\ZZ^d$ in $\ZZ^d$ and $I^1,\ldots,I^q$ the cosets of $\cP_2\ZZ^d$ and let $A_i=A\cap I_i$, $A^j=A\cap I^j$ and $A_i^j=A\cap I_i\cap I^j$. If either 
\begin{enumerate}
\item $A_i,A^j\neq\emptyset$ for all $1\leq i\leq p,1\leq j\leq q$ or
\item there are some $i,j$ such that $A_i,A^j\neq\emptyset$ and $|A_i^j|\leq c|A|$,
where $c=\frac{1}{2p(p^{1/d}+q^{1/d})^{2d}}$,
\end{enumerate}
then 
$$|\cL_1 A+\cL_2 A|\geq ((p^{1/d}+q^{1/d})^d-(1-c)\alpha)|A|-((p+q)D_1+D)|A|^{1-\sigma_1}$$
holds for all finite $A\subset \ZZ^d$.
\end{lem}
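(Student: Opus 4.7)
The plan is to first reduce to the case $|\cL_1 A + \cL_2 A| \leq s|A|$, where $s := (p^{1/d}+q^{1/d})^d$, since otherwise the conclusion is automatic. Applying Lemma~\ref{lem:discbm2} to the set $\cL_1 A$ with the transformation $\cL_2\cL_1^{-1}$ then gives constants $D, \sigma_1 > 0$ such that for any $B_1 \subseteq \cL_1 A$ and $B_2 \subseteq \cL_2 A$,
\[ |B_1 + B_2| \geq (|B_1|^{1/d} + |B_2|^{1/d})^d - D|A|^{1-\sigma_1}. \]
Here I would first check that $\cL_2\cL_1^{-1}$ has no non-trivial $\QQ$-invariant subspace, which follows from the irreducibility of the pair $(\cL_1,\cL_2)$: any invariant $U$ for $\cL_2\cL_1^{-1}$ would, via $V = \cL_1 U$, give subspaces of equal dimension with $\cL_1 U \subseteq V$ and $\cL_2 U \subseteq V$, contradicting irreducibility.

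I would then establish the coset structure. Using Lemma~\ref{lem:subgrp} and coprimality, the sublattice $M := \cL_1\ZZ^d \cap \cL_2\ZZ^d$ has index $pq$ in $\ZZ^d$ and coincides with both $\cL_2\cP_1\ZZ^d$ and $\cL_1\cP_2\ZZ^d$. A surjectivity-plus-counting argument shows that the map $(i,j) \mapsto \cL_1 I^j + \cL_2 I_i + M$ from $[p]\times[q]$ to $\ZZ^d/M$ is a bijection, so the sets $\cL_1 A^j + \cL_2 A_i$ lie in distinct cosets $C_{ij}$ of $M$ and
\[ |\cL_1 A + \cL_2 A| = \sum_{(i,j):\,A_i, A^j \neq \emptyset} |\cL_1 A^j + \cL_2 A_i|. \]
Similarly the sets $\cL_1 A_i + \cL_2 A_i$ are disjoint over $i$ in cosets of $\cL_1\ZZ^d$, and the sets $\cL_1 A^j + \cL_2 A^j$ are disjoint over $j$ in cosets of $\cL_2\ZZ^d$.

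To extract the $c\alpha|A|$ improvement, I would apply the inductive hypothesis to each non-empty $A_i$ and each non-empty $A^j$, contributing $(p+q)D_1|A|^{1-\sigma_1}$ to the error, and apply the Brunn--Minkowski estimate from above to the pair $(\cL_1 A^{j_0}, \cL_2 A_{i_0})$ in coset $C_{i_0 j_0}$, contributing the remaining $D|A|^{1-\sigma_1}$. In Case 2, the smallness $|A_{i_0}^{j_0}| \leq c|A|$ is tuned so that the BM bound $(|A^{j_0}|^{1/d}+|A_{i_0}|^{1/d})^d$ exceeds the inductive contribution $(s-\alpha)|A_{i_0}^{j_0}|$ in that coset by at least $c\alpha|A|$, with the specific constant $c = 1/(2ps^2)$ making the arithmetic work. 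In Case 1, every row and column is non-empty, so either some $|A_i^j|$ happens to be at most $c|A|$ and we invoke the Case 2 argument, or all cells are comparable, in which case the coset-wise Brunn--Minkowski bounds, summed as $\sum_{ij}(|A^j|^{1/d}+|A_i|^{1/d})^d$, already come close to $s|A|$ via near-equality in Minkowski's inequality.

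The main obstacle I anticipate is the careful bookkeeping: summing the inductive bounds on the $p$ rows and $q$ columns naively double-counts contributions in each coset $C_{ij}$, and isolating the precise single-coset improvement of $c\alpha|A|$ from the Brunn--Minkowski bound --- uniformly across both cases and without introducing extra inductive applications beyond the $p+q$ already budgeted --- is the delicate accounting step.
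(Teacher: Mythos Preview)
Your setup is essentially the paper's: reduce to $|\cL_1 A+\cL_2 A|\le s|A|$, apply Lemma~\ref{lem:discbm2} (the paper uses $\cL=\cL_1^{-1}\cL_2$ on $A$ rather than $\cL_2\cL_1^{-1}$ on $\cL_1 A$, but either works), and exploit the coset disjointness. Your $pq$-cell decomposition is correct and in fact finer than what the paper writes down. Where you diverge---and where the ``delicate accounting'' you flag gets resolved---is that the paper does \emph{not} sum row bounds and column bounds simultaneously. Instead it uses a nested decomposition: first $|\cL_1 A+\cL_2 A|=\sum_i |\cL_1 A+\cL_2 A_i|$, bounding the terms $i\ne i_0$ by $|\cL_1 A_i+\cL_2 A_i|\ge(s-\alpha)|A_i|-D_1|A_i|^{1-\sigma_1}$; then $|\cL_1 A+\cL_2 A_{i_0}|=\sum_j |\cL_1 A^j+\cL_2 A_{i_0}|$, bounding the terms $j\ne j_0$ by $|\cL_1 A_{i_0}^j+\cL_2 A_{i_0}^j|\ge(s-\alpha)|A_{i_0}^j|-D_1|A_{i_0}^j|^{1-\sigma_1}$ (note: the inductive hypothesis is applied to $A_{i_0}^j$, not $A^j$). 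The remaining cell gets the single Brunn--Minkowski bound. This telescopes cleanly to $(s-\alpha)(|A|-|A_{i_0}^{j_0}|)+(|A_{i_0}|^{1/d}+|A^{j_0}|^{1/d})^d$ minus the error, with no double-counting and exactly $(p-1)+(q-1)$ inductive applications.

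The entire argument then reduces to one unified claim: there exist $i_0,j_0$ with $A_{i_0},A^{j_0}\ne\emptyset$ and
\[
(|A_{i_0}|^{1/d}+|A^{j_0}|^{1/d})^d-(s-\alpha)|A_{i_0}^{j_0}|\ge c\alpha|A|.
\]
In Case~1 the paper picks $(i,j)$ minimising $|A_i^j|$; if this minimum exceeds $c|A|$ then $|A_i|\ge q|A_i^j|$ and $|A^j|\ge p|A_i^j|$, whence the left side is $\ge s|A_i^j|-(s-\alpha)|A_i^j|=\alpha|A_i^j|>c\alpha|A|$. In Case~2 there is a short subcase split (either some $|A_{i'}^j|>sc|A|$, or all are $\le sc|A|$ and one picks $i'$ with $|A_{i'}|\ge|A|/p$) that verifies the claim directly; this is where the specific value $c=1/(2ps^2)$ is used. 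Your alternative Case~1 idea of summing all $pq$ Brunn--Minkowski bounds would require proving $\sum_{i,j}(|A^j|^{1/d}+|A_i|^{1/d})^d\ge s|A|$, which is not obvious and is not what the paper does.
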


\begin{proof}
Since $\cL_1,\cL_2$ are irreducible, $\cL_1^{-1}\cL_2 \in \Mat_d(\QQ)$ has no non-trivial invariant subspace over $\QQ$. We may also assume that $|\cL_1 A+\cL_2 A|\leq (p^{1/d}+q^{1/d})^d|A|$, so that, by Lemma~\ref{lem:discbm2} with $\cL=\cL_1^{-1}\cL_2$, there are $\sigma_1,D>0$ such that, for any $B_1,B_2\subseteq A$,
$$|\cL_1 B_1+\cL_2 B_2|=|B_1+\cL B_2|\geq (|B_1|^{1/d}+|B_2|^{1/d})^d-D|A|^{1-\sigma_1}.$$

We claim that there is a choice of $i$ and $j$ such that $A_i,A^j\neq\emptyset$ and 
$$(|A_i|^{1/d}+|A^j|^{1/d})^d-((p^{1/d}+q^{1/d})^d-\alpha)|A_i^j|\geq \alpha c|A|.$$

Suppose first that $A_i,A^j\neq\emptyset$ for all $i,j$. Pick $i$ and $j$ such that $|A_i^j|$ is minimal. If $|A_i^j|\leq c|A|$, then we may pass to the second case. Otherwise, $|A_i^j|>c|A|$. Since, for any $i',j'$, we have $|A_{i'}^j|\geq |A_i^j|$ and $|A_i^{j'}|\geq |A_i^j|$, we see that $|A^j|\geq p|A_i^j|$ and $|A_i|\geq q|A_i^j|$. Hence,
$$(|A_i|^{1/d}+|A^j|^{1/d})^d-((p^{1/d}+q^{1/d})^d-\alpha)|A_i^j|\geq \alpha|A_i^j|\geq \alpha c|A|.$$

Suppose now that there are $i,j$ such that $A_i,A^j\neq\emptyset$ and $|A_i^j|\leq c|A|$. 
If there is some $i'$ such that $|A_{i'}^j|>(p^{1/d}+q^{1/d})^dc|A|$, then
\begin{align*}
(|A_i|^{1/d}+|A^j|^{1/d})^d-((p^{1/d}+q^{1/d})^d-\alpha)|A_i^j| &\geq |A_{i'}^j|-((p^{1/d}+q^{1/d})^d-\alpha)|A_i^j|\\
&\geq (p^{1/d}+q^{1/d})^dc|A|-((p^{1/d}+q^{1/d})^d-\alpha)c|A|\\
&= \alpha c|A|.
\end{align*}
Otherwise, we may assume that $|A_{i'}^j|\leq (p^{1/d}+q^{1/d})^dc|A|$ for all $i'$. Since $\sum_i |A_i|=|A|$, there is some $i'$ such that $|A_{i'}|\geq |A|/p$. Thus,
\begin{align*}
(|A_{i'}|^{1/d}+|A^j|^{1/d})^d-((p^{1/d}+q^{1/d})^d-\alpha)|A_{i'}^j| &\geq |A_{i'}|-((p^{1/d}+q^{1/d})^d-\alpha)|A_{i'}^j|\\
&\geq \frac{1}{p}|A|-((p^{1/d}+q^{1/d})^d-\alpha)(p^{1/d}+q^{1/d})^dc|A|\\
&\geq \frac{1}{2p}|A|>\alpha c|A|.
\end{align*}
This proves the claim.
From here on, without loss of generality, we will assume that $A_1,A^1\neq\emptyset$ and 
\begin{align} \label{eqn:eqn1}
    (|A_1|^{1/d}+|A^1|^{1/d})^d-((p^{1/d}+q^{1/d})^d-\alpha)|A_1^1|\geq \alpha c|A|.
\end{align}

We will now show that the sets $\cL_1 A+\cL_2 A_i$ belong to different cosets of $\cL_1\ZZ^d$ for $i=1,\ldots,p$ and so are disjoint. Note that $\cL_2\cP_1\ZZ^d\subseteq \cL_1\ZZ^d$, so the sets do indeed belong to cosets of $\cL_1\ZZ^d$. If, for some $i,i'$, the corresponding sets belong to the same coset, then $\cL_2 I_i-\cL_2 I_{i'} \subseteq \cL_1\ZZ^d$, so $I_i-I_{i'}\subseteq \cL_2^{-1}\cL_1\ZZ^d$. But this means that $I_i$ and $I_{i'}$ are the same coset of $\cP_1\ZZ^d$. Hence, $\cL_1 A+\cL_2 A$ can be partitioned into the sets $\cL_1 A+\cL_2 A_i$ for $i=1,\ldots,p$. Similarly, the sets $\cL_1 A^j+\cL_2 A_1$ belong to disjoint cosets of $\cL_2\ZZ^d$, so $\cL_1 A+\cL_2 A_1$ can be partitioned into the sets $\cL_1 A^j+\cL_2 A_1$ for $j = 1, 2, \dots, q$. 

Note now that, by our choice of $\sigma_1$ and $D$, we have
$$|\cL_1 A^1+\cL_2 A_1|\geq (|A^1|^{1/d}+|A_1|^{1/d})^d-D|A|^{1-\sigma_1}.$$
Thus, using our earlier claim, we have
\begin{align*}
|\cL_1 A+\cL_2 A| &= \sum_{i=2}^p |\cL_1 A+\cL_2 A_i|+\sum_{j=2}^q |\cL_1 A^j+\cL_2 A_1|+|\cL_1 A^1+\cL_2 A_1|\\
&\geq \sum_{i=2}^p |\cL_1 A_i+\cL_2 A_i|+\sum_{j=2}^q |\cL_1 A_1^j+\cL_2 A_1^j|+|\cL_1 A^1+\cL_2 A_1|\\
&\geq ((p^{1/d}+q^{1/d})^d-\alpha)(|A|-|A_1^1|)-(p+q)D_1|A|^{1-\sigma_1}\\
&\quad +(|A^1|^{1/d}+|A_1|^{1/d})^d-D|A|^{1-\sigma_1}\\
&\geq ((p^{1/d}+q^{1/d})^d-\alpha)|A|+\alpha c|A|-((p+q)D_1+D)|A|^{1-\sigma_1}\\
&= ((p^{1/d}+q^{1/d})^d-(1-c)\alpha)|A|-((p+q)D_1+D)|A|^{1-\sigma_1},
\end{align*}
as required, where we used (\ref{eqn:eqn1}) in going from the third to the fourth line.
\end{proof}

We now introduce some further notation. Indeed, let 
$\cP\in\Mat_d(\ZZ)$ be a linear transformation such that
$$\cP\ZZ^d=\ZZ^d\cap \cL_1^{-1}\cL_2 \ZZ^d\cap \cL_2^{-1}\cL_1 \ZZ^d=\cP_1\ZZ^d\cap \cP_2\ZZ^d.$$
Then, by Lemma~\ref{lem:subgrp},
\begin{align*}
    |\det\cP| &= [\ZZ^d:\cP\ZZ^d]
    = [\ZZ^d:\cP_1\ZZ^d+\cP_2\ZZ^d][\cP_1\ZZ^d+\cP_2\ZZ^d:\cP_1\ZZ^d\cap \cP_2\ZZ^d]\\
    &= [\ZZ^d:\cP_1\ZZ^d+\cP_2\ZZ^d][\cP_1\ZZ^d+\cP_2\ZZ^d:\cP_1\ZZ^d][\cP_1\ZZ^d+\cP_2\ZZ^d:\cP_2\ZZ^d]\\
    &\leq [\ZZ^d:\cP_1\ZZ^d][\ZZ^d:\cP_2\ZZ^d]
    = |\det \cP_1||\det \cP_2|
    = pq.
\end{align*}
Moreover, let $\cQ\in\Mat_d(\ZZ)$ be such that 
$$\cQ\ZZ^d=\cL_1\ZZ^d\cap \cL_2\ZZ^d,$$
so that, as above, $|\det\cQ|=|\det \cL_1||\det \cL_2|=pq$. 

Note that $\cL_1\cP\ZZ^d,\cL_2\cP\ZZ^d\subseteq \cL_1\ZZ^d\cap \cL_2\ZZ^d = \cQ\ZZ^d$, so $\cQ^{-1}\cL_1\cP\ZZ^d,\cQ^{-1}\cL_2\cP\ZZ^d\subseteq\ZZ^d$, implying that $\cQ^{-1}\cL_1\cP,\cQ^{-1}\cL_2\cP\in\Mat_d(\ZZ)$.  Therefore, since $\cL_1,\cL_2$ are coprime, 
$$|\det \cQ^{-1}\cP|\geq 1.$$
But $|\det \cQ|=pq$ and $|\det \cP|\leq pq$, so we must have $|\det\cP|=pq$. 

Finally, we let $L_1$ be the lattice $\cP\ZZ^d\cap \cL_2^{-1}\cL_1\cP\ZZ^d$ and $L_2=\cP\ZZ^d\cap \cL_1^{-1}\cL_2\cP\ZZ^d$. 
The next lemma will be important in the proof of Lemma~\ref{lem:tri2} below, which, like Lemma~\ref{lem:tri} in the last section, says that any set $A$ falls into one of three categories, each helpful for our bootstrap.

\begin{lem} \label{lem:isom}
The linear maps $\cL_1,\cL_2$ induce homomorphisms 
$$\phi_1,\phi_2:\ZZ^d/L_1\to \ZZ^d/\cL_1\cP\ZZ^d$$
of finite abelian groups. 
Furthermore, $\phi_1+\phi_2$ is an isomorphism.
\end{lem}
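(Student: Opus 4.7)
The plan is to show that $\phi_1+\phi_2$, induced by $\cL_1+\cL_2$, is an isomorphism of finite abelian groups by proving it is injective and that domain and codomain both have order $p^2q$. Well-definedness of $\phi_1$ and $\phi_2$ is immediate from the definition of $L_1$: for $x \in L_1 = \cP\ZZ^d \cap \cL_2^{-1}\cL_1\cP\ZZ^d$, we have $\cL_1 x \in \cL_1\cP\ZZ^d$ and $\cL_2 x \in \cL_1\cP\ZZ^d$ directly. The codomain has order $|\det \cL_1\cP| = p \cdot pq = p^2q$.

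For injectivity, I would suppose $(\cL_1+\cL_2)x \in \cL_1\cP\ZZ^d$. As observed in the text just above the lemma, $\cL_1\cP\ZZ^d \subseteq \cQ\ZZ^d = \cL_1\ZZ^d \cap \cL_2\ZZ^d$. So $\cL_1 x = (\cL_1+\cL_2)x - \cL_2 x \in \cL_2\ZZ^d$, giving $x \in \cP_2\ZZ^d$; symmetrically, $x \in \cP_1\ZZ^d$. Thus $x \in \cP\ZZ^d$, which gives $\cL_1 x \in \cL_1\cP\ZZ^d$ and therefore $\cL_2 x \in \cL_1\cP\ZZ^d$ too, so $x \in L_1$. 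This yields $[\ZZ^d : L_1] \leq p^2q$.

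The matching lower bound $[\ZZ^d : L_1] \geq p^2q$ is the main obstacle. A direct approach via Lemma~\ref{lem:subgrp} would reduce to showing $\cL_1\cP\ZZ^d + \cL_2\cP\ZZ^d = \cQ\ZZ^d$, which does not seem any easier. Instead, I would invoke coprimality of $\cL_1, \cL_2$ in the same fashion used just before the lemma statement to establish $|\det \cP| = pq$. Choose $\cQ'' \in \Mat_d(\ZZ)$ with $\cQ''\ZZ^d = L_1$, which is possible since $L_1$ is a full-rank sublattice of $\ZZ^d$. The well-definedness observation says $\cL_i L_1 \subseteq \cL_1\cP\ZZ^d$, equivalently $(\cL_1\cP)^{-1}\cL_i\cQ'' \in \Mat_d(\ZZ)$ for both $i$. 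Applying coprimality to the pair $\cP'' = (\cL_1\cP)^{-1}$ and $\cQ''$ rules out $0 < |\det(\cP''\cQ'')| < 1$, forcing $[\ZZ^d : L_1] = |\det \cQ''| \geq |\det \cL_1\cP| = p^2q$.

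Combining the two bounds gives $|\ZZ^d/L_1| = p^2q = |\ZZ^d/\cL_1\cP\ZZ^d|$, so the injective homomorphism $\phi_1+\phi_2$ between finite groups of equal order is an isomorphism, as required.
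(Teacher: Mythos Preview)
Your argument is correct. The well-definedness step and the kernel computation showing that $(\cL_1+\cL_2)x\in\cL_1\cP\ZZ^d$ forces $x\in L_1$ are exactly the content of the paper's proof, just phrased a little more cleanly (you use $\cL_1\cP\ZZ^d\subseteq\cQ\ZZ^d$ to get $x\in\cP_1\ZZ^d\cap\cP_2\ZZ^d$ in one stroke, whereas the paper manipulates the two rearranged equations $x=\cP y-\cL_1^{-1}\cL_2 x$ and $x=\cL_2^{-1}\cL_1\cP y-\cL_2^{-1}\cL_1 x$).

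Where your write-up genuinely differs is in the final step. The paper's proof stops after establishing $\ker\phi=L_1$, which gives only injectivity of $\phi_1+\phi_2$; surjectivity (equivalently, $(\cL_1+\cL_2)\ZZ^d+\cL_1\cP\ZZ^d=\ZZ^d$) is never addressed, and the index identity $[\ZZ^d:L_1]=p^2q$ is stated only \emph{after} the lemma as a consequence of the isomorphism. You instead close the loop by invoking coprimality directly: picking $\cQ''$ with $\cQ''\ZZ^d=L_1$ and applying the coprimality definition to the pair $(\cL_1\cP)^{-1},\cQ''$ yields $[\ZZ^d:L_1]\geq p^2q$, which together with the injection into a group of order $p^2q$ forces bijectivity. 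This is precisely the mechanism the paper used just before the lemma to pin down $|\det\cP|=pq$, so your addition is very much in the spirit of the surrounding text and makes the argument self-contained.
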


\begin{proof}
If $x\in L_1$, then $\cL_1 x\in \cL_1\cP\ZZ^d$ and $\cL_2 x\in \cL_1\cP\ZZ^d$, so $\phi_1$ and $\phi_2$ are well-defined group homomorphisms. Now let $\phi':\ZZ^d \to \ZZ^d/\cL_1\cP\ZZ^d$ be the map induced by $\cL_1+\cL_2$. 

We first show that $\ker \phi'=L_1$. We have already seen above that $\ker \phi'\supseteq L_1$. For the converse, suppose that $x\in\ker\phi'$, so that $\cL_1 x+\cL_2 x=\cL_1\cP y$ for some $y\in\ZZ^d$. This implies that
$$x=\cP y-\cL_1^{-1}\cL_2 x,$$
$$x=\cL_2^{-1}\cL_1\cP y-\cL_2^{-1}\cL_1 x.$$
Since $\cP y\in \cL_1^{-1}\cL_2 \ZZ^d$, the first equation implies that $x\in \ZZ^d\cap \cL_1^{-1}\cL_2\ZZ^d$. From the second equation, we have $x\in \ZZ^d\cap \cL_2^{-1}\cL_1\ZZ^d$, so that $x\in \ZZ^d\cap \cL_1^{-1}\cL_2\ZZ^d\cap \cL_2^{-1}\cL_1\ZZ^d=\cP\ZZ^d$. It then follows from applying the second equation again that $x\in \cL_2^{-1}\cL_1\cP\ZZ^d$, so that $x\in \cP\ZZ^d\cap \cL_2^{-1}\cL_1\cP\ZZ^d=L_1$, proving that $\ker \phi' = L_1$.

Hence, the induced map $\phi:\ZZ^d/L_1 \to \ZZ^d/\cL_1\cP\ZZ^d$ is injective. To show that it is in fact an isomorphism, we shall show that $|\ZZ^d/L_1|=|\ZZ^d/\cL_1\cP\ZZ^d|$. From injectivity, we have $|\ZZ^d/L_1|\leq |\ZZ^d/\cL_1\cP\ZZ^d|$, so it suffices to show that $|\ZZ^d/L_1|\geq |\ZZ^d/\cL_1\cP\ZZ^d|$. 

Let $\cR\in \Mat_d(\ZZ)$ be such that $\cR \ZZ^d=\cL_1\cP \ZZ^d+\cL_2\cP \ZZ^d$. Then $\cR^{-1}\cL_1 \cP\ZZ^d\subseteq \ZZ^d$ and $\cR^{-1}\cL_2 \cP\ZZ^d\subseteq \ZZ^d$, so $\cR^{-1}\cL_1 \cP$ and $\cR^{-1}\cL_2 \cP$ are integer matrices. By coprimality, we have $|\det \cR|\leq |\det \cP|$. By Lemma~\ref{lem:subgrp}, we have 
$$[\cL_1\cP \ZZ^d+\cL_2\cP \ZZ^d:\cL_1\cP \ZZ^d\cap \cL_2\cP \ZZ^d]=[\cL_1\cP \ZZ^d+\cL_2\cP \ZZ^d:\cL_1\cP \ZZ^d][\cL_1\cP \ZZ^d+\cL_2\cP \ZZ^d:\cL_2\cP \ZZ^d].$$
In other words, $[\ZZ^d:\cL_2 L_1]|\det \cR|=[\ZZ^d:\cL_1\cP \ZZ^d][\ZZ^d:\cL_2\cP \ZZ^d]$. Since $|\det \cR|\leq |\det \cP|$, it follows from $[\ZZ^d:\cL_2 L_1]=|\det \cL_2|[\ZZ^d:L_1]$ and $[\ZZ^d:\cL_2 \cP\ZZ^d]=|\det \cL_2|[\ZZ^d:\cP\ZZ^d]$ that $[\ZZ^d:L_1]\geq [\ZZ^d:\cL_1\cP \ZZ^d]$, as required.
\end{proof}

Since $\cL_1\cP\ZZ^d$ has index $|\det \cL_1\cP|=p^2q$ and $\phi_1+\phi_2$ is an isomorphism, the lemma implies that $L_1$ also has index $p^2q$. Similarly, $L_2$ has index $pq^2$.

\begin{lem} \label{lem:tri2}
Let $X$ be a subset of $G=\ZZ^d/L_1$ containing $0$ and define $\phi_1,\phi_2$ as in the previous lemma. Then at least one of the following holds:
\begin{enumerate}
\item $X$ does not generate $G$;
\item $|\phi_1(X)+\phi_2(X)|>|X|$;
\item $\cP\ZZ^d/L_1\subseteq X$.
\end{enumerate}
\end{lem}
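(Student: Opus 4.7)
The plan is to argue that failure of (1) and (2) forces (3). First I will unpack what (2) failing means: since $\psi := \phi_1+\phi_2$ is an isomorphism by Lemma~\ref{lem:isom}, we always have $|\phi_1(X)+\phi_2(X)| \geq |(\phi_1+\phi_2)(X)| = |\psi(X)| = |X|$, so failure of (2) forces the sumset identity $\phi_1(X)+\phi_2(X) = \psi(X)$. That is the key algebraic input.

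Next I would use $\psi$ to identify the target $\ZZ^d/\cL_1\cP\ZZ^d$ with $G$, setting $\pi_i := \psi^{-1}\phi_i \in \mathrm{End}(G)$. These satisfy $\pi_1+\pi_2 = \mathrm{id}_G$, and the sumset identity above becomes the cleaner statement $\pi_1(X)+\pi_2(X) = X$. For any $x,y \in X$, this gives $\pi_1(x)+\pi_2(y) \in X$; using $\pi_2 = \mathrm{id}_G - \pi_1$ it rewrites as $y+\pi_1(x-y) \in X$, and symmetrically as $x-\pi_2(x-y) \in X$. Consequently both $\pi_1(X-X)$ and $\pi_2(X-X)$ are contained in the stabilizer set $L := \set{v\in G : v+X \subseteq X}$, which is in fact a subgroup of $G$ (using finiteness of $X$ to upgrade $v+X \subseteq X$ to $v+X = X$, so that $-v\in L$ as well).

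From here the plan is to mimic the final step of the proof of Lemma~\ref{lem:tri}. Failure of (1) means $X$ generates $G$; since $0\in X$ we have $X\subseteq X-X$, so $X-X$ also generates $G$. Applying the homomorphism $\pi_i$ to a generating set of $G$ yields a generating set of $\pi_i(G)$, and since $L$ is a subgroup containing $\pi_i(X-X)$, I conclude $L \supseteq \pi_1(G)+\pi_2(G) \supseteq (\pi_1+\pi_2)(G) = G$. Hence $L=G$, i.e.\ $v+X \subseteq X$ for every $v\in G$; taking the summand $0 \in X$ gives $X = G$, which in particular yields $\cP\ZZ^d/L_1 \subseteq X$ and establishes (3).

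I do not anticipate a real obstacle here: the only conceptual move is using Lemma~\ref{lem:isom} to reformulate the situation as a pair of endomorphisms of $G$ summing to $\mathrm{id}_G$, after which the argument proceeds in close analogy with the proof of Lemma~\ref{lem:tri}. The one detail that deserves care is the upgrade of the size inequality in (2) to the actual set equality $\phi_1(X)+\phi_2(X) = \psi(X)$, since it is this sharper form---not merely a bound---that supplies the translation-invariance needed to feed into the stabilizer argument.
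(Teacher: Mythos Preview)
There is a genuine gap. From the identity $\pi_1(X)+\pi_2(X)=X$ you correctly deduce that $y+\pi_1(x-y)\in X$ for all $x,y\in X$, but this only says that $\pi_1$ maps $X-y$ into $X-y$ for each individual $y\in X$. It does \emph{not} say that $\pi_1(X-X)$ lies in the stabiliser $L=\bigcap_{y\in X}(X-y)$; for that you would need $\pi_1(X-y')\subseteq X-y$ for \emph{all} pairs $y,y'$, which does not follow. In fact your eventual conclusion $X=G$ is strictly stronger than the lemma's assertion $\cP\ZZ^d/L_1\subseteq X$, and it is false at the level of generality you are arguing. Take $G=\ZZ/6\ZZ$, $\pi_1(x)=3x$, $\pi_2(x)=4x$ (so $\pi_1+\pi_2=\mathrm{id}_G$) and $X=\{0,1,3,4\}$. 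Then $X$ generates $G$, and $\pi_1(X)+\pi_2(X)=\{0,3\}+\{0,4\}=\{0,1,3,4\}=X$, yet $X\neq G$; moreover $\ker\pi_1=\{0,2,4\}\not\subseteq X$, so even the weaker conclusion (3) fails. Here one computes $L=\{0,3\}$ while $\pi_2(X-X)=\pi_2(G)=\{0,2,4\}\not\subseteq L$, exhibiting the failure of your key containment.

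The upshot is that the lemma genuinely depends on the particular arithmetic of $\phi_1,\phi_2$ and cannot be proved from the bare hypothesis that $\phi_1+\phi_2$ is an isomorphism. The paper's proof exploits two concrete identifications, $\psi^{-1}\phi_2(G)=\cP_2\ZZ^d/L_1$ and $\psi^{-1}\phi_1(\cP_2\ZZ^d/L_1)=\cP\ZZ^d/L_1$, together with the fact that $\phi_1$ vanishes on $\cP\ZZ^d/L_1$; these allow one to run an induction inside the subgroup $\cP\ZZ^d/L_1$ rather than inside all of $G$. Your abstract stabiliser argument would need to be replaced by something that uses this extra structure.
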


\begin{proof}
Suppose all 3 do not hold. Let $\phi=\phi_1+\phi_2$, which is an isomorphism by Lemma \ref{lem:isom}. Note that $\phi(X)\subseteq \phi_1(X)+\phi_2(X)$, so $|\phi_1(X)+\phi_2(X)|\geq |X|$ always holds. By  assumption, we must have $\phi_1(X)+\phi_2(X)=\phi(X)$. Hence, for any $x,y\in X$, we have $\phi^{-1}\phi_1(x)+\phi^{-1}\phi_2(y)\in X$. In particular, since $0\in X$, we have $\phi^{-1}\phi_1(x),\phi^{-1}\phi_2(x)\in X$.

We claim that $\phi^{-1}\phi_2(G)=\cP_2\ZZ^d/L_1$ and $\phi^{-1}\phi_1(\cP_2\ZZ^d/L_1)=\cP\ZZ^d/L_1$. For the first claim, note that $\phi_2(G)=\cL_2\ZZ^d/\cL_1\cP\ZZ^d$, so it suffices to show that $\phi(\cP_2\ZZ^d/L_1)=\cL_2\ZZ^d/\cL_1\cP\ZZ^d$. Note that, for any $x\in \cP_2\ZZ^d=\ZZ^d\cap \cL_1^{-1}\cL_2\ZZ^d$, we have $\cL_1 x,\cL_2 x\in \cL_2\ZZ^d$, 
so that $\phi(\cP_2\ZZ^d/L_1)\subseteq \cL_2\ZZ^d/\cL_1\cP\ZZ^d$. Since $\cP_2\ZZ^d$ and $\cL_2\ZZ^d$ have index $q$ and $L_1$ and $\cL_1\cP\ZZ^d$ have index $p^2q$, we have $|\cP_2\ZZ^d/L_1|=|\cL_2\ZZ^d/\cL_1\cP\ZZ^d|=p^2$. Since $\phi$ is an isomorphism, we must then have $\phi(\cP_2\ZZ^d/L_1)=\cL_2\ZZ^d/\cL_1\cP\ZZ^d$.

For the second claim,  note that  $\phi_1(\cP_2\ZZ^d/L_1)=\cL_1\cP_2\ZZ^d/\cL_1\cP\ZZ^d=(\cL_1\ZZ^d\cap \cL_2\ZZ^d)/\cL_1\cP\ZZ^d$, so it suffices to show that $\phi(\cP\ZZ^d/L_1)=(\cL_1\ZZ^d\cap \cL_2\ZZ^d)/\cL_1\cP\ZZ^d$. If $x\in \cP\ZZ^d$, then $\cL_1 x,\cL_2 x\in \cL_1\ZZ^d\cap \cL_2\ZZ^d$, so we have the inclusion $\phi(\cP\ZZ^d/L_1)\subseteq (\cL_1\ZZ^d\cap \cL_2\ZZ^d)/\cL_1\cP\ZZ^d$. By again counting sizes, we have $|\cP\ZZ^d/L_1|=|(\cL_1\ZZ^d\cap \cL_2\ZZ^d)/\cL_1\cP\ZZ^d|=p$, so  $\phi(\cP\ZZ^d/L_1)=(\cL_1\ZZ^d\cap \cL_2\ZZ^d)/\cL_1\cP\ZZ^d$, proving our claim.

Let $X'=X\cap \cP_2\ZZ^d/L_1$. Since $\phi^{-1}\phi_2(X)\subseteq X'$ and $X$ generates $G$, we have that $X'$ generates $\cP_2\ZZ^d/L_1$. Moreover, $\phi^{-1}\phi_1(X')\subseteq X$ and generates $\cP\ZZ^d/L_1$. Note that, for any $x\in \cP\ZZ^d/L_1$, $\phi_1(x)=0$, so $\phi^{-1}\phi_2(x)=x$. This implies that, for any $x\in X\cap \cP\ZZ^d/L_1$ and $y\in X'$, we have $\phi^{-1}\phi_1(y)+x=\phi^{-1}\phi_1(y)+\phi^{-1}\phi_2(x)\in X\cap \cP\ZZ^d/L_1$, so $X\cap \cP\ZZ^d/L_1$ is closed under adding elements of $\phi^{-1}\phi_1(X')$. But $\phi^{-1}\phi_1(X')$ generates $\cP\ZZ^d/L_1$ and $0\in X\cap \cP\ZZ^d/L_1$.
It follows that $X\cap \cP\ZZ^d/L_1=\cP\ZZ^d/L_1$, contradicting our third assumption.
\end{proof}

We now come to our main bootstrapping lemma.

\begin{lem} \label{lem:bootstrap22}
Let $d$, $p$ and $q$ be positive integers. Then there are constants $\sigma_1>0$ and $D>0$ depending only on $d$, $p$ and $q$ such that the following holds. Suppose that there are $0<\alpha<(p^{1/d}+q^{1/d})^d$ and $D_1>0$ such that 
$$|\cL_1 A+\cL_2 A|\geq ((p^{1/d}+q^{1/d})^d-\alpha)|A|-D_1|A|^{1-\sigma_1}$$
holds for all finite $A\subset \ZZ^d$ and all irreducible, coprime linear transformations $\cL_1,\cL_2\in\Mat_d(\ZZ)$ with $|\det\cL_1|=p$ and $|\det\cL_2|=q$. 
Then 
$$|\cL_1 A+\cL_2 A|\geq ((p^{1/d}+q^{1/d})^d-(1-c^2)\alpha)|A|-(4p^2q^2D_1+D)|A|^{1-\sigma_1}$$
holds for all such $A\subset \ZZ^d$ and $\cL_1,\cL_2$, where $c=\frac{1}{2\max(p,q)(p^{1/d}+q^{1/d})^{2d}}$.
\end{lem}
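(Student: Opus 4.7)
We follow the template of Lemma \ref{lem:bootstrap2}, using Lemma \ref{lem:bootstrap21} in the role of Lemma \ref{lem:bootstrap1} and Lemma \ref{lem:tri2} in the role of Lemma \ref{lem:tri}. Let $\sigma_1,D>0$ be the constants supplied by Lemma \ref{lem:bootstrap21}. After translating so that $0\in A$, suppose for contradiction that the claimed bound fails and pick, among all counterexamples $(A,\cL_1,\cL_2)$, one with $[\ZZ^d:\langle A\rangle]$ minimum; we may also assume $|\cL_1 A+\cL_2 A|\leq (p^{1/d}+q^{1/d})^d|A|$, so that Lemma \ref{lem:noplane} applied to $\cL_1^{-1}\cL_2$ guarantees $A$ does not lie in any proper affine subspace.

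A first observation is that if either hypothesis of Lemma \ref{lem:bootstrap21} holds, it produces an improvement by the factor $(1-c)$, which is strictly stronger than the target $(1-c^2)$ since $c<1$, contradicting our choice of counterexample. Hence both hypotheses of Lemma \ref{lem:bootstrap21} must fail: some row $A_i$ or column $A^j$ is empty, and every $A_i^j$ whose containing row and column are both non-empty satisfies $|A_i^j|>c|A|$.

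Now let $X\subseteq G:=\ZZ^d/L_1$ be the image of $A$ under the quotient map and apply Lemma \ref{lem:tri2}. In the first case, where $X$ does not generate $G$, one sets up a proper $\cL_1,\cL_2$-invariant sublattice $E\subsetneq\ZZ^d$ containing $A$; the natural candidate is $E=\langle A\rangle+L_1$, with invariance emerging from the definition of $L_1$ together with the fact (Lemma \ref{lem:isom}) that $\phi_1+\phi_2$ is an isomorphism. Writing $E=\cP'\ZZ^d$ with $\cP'\in\Mat_d(\ZZ)$ and $|\det\cP'|>1$, the conjugates $\cP'^{-1}\cL_i\cP'$ lie in $\Mat_d(\ZZ)$ with unchanged determinants and inherited irreducibility and coprimality, so the rescaled pair $(\cP'^{-1}A,\cP'^{-1}\cL_i\cP')$ is a counterexample of strictly smaller lattice index, contradicting minimality. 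In the second case, $|\phi_1(X)+\phi_2(X)|>|X|$, so there exist $a,b\in A$ with distinct images in $G$ such that $\cL_1 a+\cL_2 b$ lies in a coset of $\cL_1\cP\ZZ^d$ disjoint from every diagonal $\cL_1 a'+\cL_2 a'+\cL_1\cP\ZZ^d$ with $a'\in A$; this extra coset contains the block $\cL_1(A\cap(a+L_1))+\cL_2(A\cap(b+L_1))$, whose size we bound from below using Lemma \ref{lem:discbm2} after selecting $a,b$ so that both fibres $A\cap(a+L_1)$ and $A\cap(b+L_1)$ are large (invoking $|A_i^j|>c|A|$), giving the required gain of at least $c^2\alpha|A|$. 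In the third case, $\cP\ZZ^d/L_1\subseteq X$, which forces $A$ to meet every coset of $L_1$ inside $\cP\ZZ^d$ and hence $|A_1^1|\geq p$; running the symmetric analysis with $L_2$ in place of $L_1$ either returns to the previous cases (already handled) or yields structural information that contradicts the emptiness of some $A_i$ or $A^j$ established earlier.

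The principal obstacle is Case 1: simultaneously verifying $\cL_1$- and $\cL_2$-invariance of the rescaling sublattice, which is precisely what the intricate construction of $L_1$ in the build-up to Lemma \ref{lem:isom} is designed to enable. Case 2 requires careful bookkeeping to extract the $c^2\alpha|A|$ gain from the failure of Lemma \ref{lem:bootstrap21}'s hypotheses, while Case 3 amounts to combinatorial coset counting once the symmetric side of the trichotomy is invoked.
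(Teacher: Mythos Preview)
Your outline follows the right template, but each of the three cases has a genuine gap compared with the paper's argument.

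\textbf{Case 1.} You propose to conjugate by a single $\cP'$ with $\cP'\ZZ^d=E=\langle A\rangle+L_1$, which requires $\cL_i E\subseteq E$ for $i=1,2$. This invariance does not hold in general: one has $\cL_i L_1\subseteq \cL_1\cP\ZZ^d$, but $\cL_1\cP\ZZ^d$ need not lie in $E$. The paper instead uses \emph{two} lattices: $\cQ\ZZ^d=\langle A\rangle+L_1$ on the domain side and $\cQ'\ZZ^d=\langle \cL_1 A+\cL_2 A\rangle+\cL_1\cP\ZZ^d$ on the target side, replacing $(\cL_1,\cL_2,A)$ by $(\cQ'^{-1}\cL_1\cQ,\cQ'^{-1}\cL_2\cQ,\cQ^{-1}A)$. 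For this to preserve $|\det\cL_i|$ one needs $|\det\cQ|=|\det\cQ'|$, and this is exactly where the additional assumption that (2) \emph{fails} is used: from $\phi_1(A')+\phi_2(A')=\phi(A')$ and the fact that $\phi$ is an isomorphism one deduces that both lattices have the same index. Your case split treats (1) on its own, which is not enough.

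\textbf{Cases 2 and 3.} Knowing $|A_i^j|>c|A|$ for the $\cP\ZZ^d$-cosets does not control the sizes of the finer $L_1$-cosets $A\cap(a+L_1)$ that actually appear in the extra-coset argument; each $\cP\ZZ^d$-coset splits into $p$ such pieces. The paper therefore makes a further dichotomy: either every nonempty $L_1$-coset $A_{i,k}$ has $|A_{i,k}|>c^2|A|$ (and then the extra coset from (2) already gives a gain of $|A_{i_1,k_1}|>c^2|A|$ via the trivial bound), or some $|A_{1,1}|\le c|A_1|$, in which case one \emph{rescales}: set $B=\cP^{-1}A_1$ and introduce new coprime irreducible matrices $\cM_i=\cQ^{-1}\cL_i\cP\in\Mat_d(\ZZ)$ with $|\det\cM_i|=|\det\cL_i|$, then apply Lemma~\ref{lem:bootstrap21} to $\cM_1 B+\cM_2 B$. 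The same rescaling to $(B,\cM_1,\cM_2)$ is the heart of Case 3: having $\cP\ZZ^d/L_1\subseteq A'$ and, symmetrically, $\cP\ZZ^d/L_2\subseteq A''$ does not contradict any emptiness at the level of $A$, but it does force all the $\cP_1'$- and $\cP_2'$-cosets of $B$ to be nonempty, so that condition~1 of Lemma~\ref{lem:bootstrap21} applies to the rescaled problem. Your proposal omits this rescaling mechanism entirely.
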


\begin{proof}
Take $\sigma_1,D$ as in Lemma \ref{lem:bootstrap21}. By translating $A$, we may assume that $0\in A$. We may also assume that $|\cL_1 A+\cL_2 A|\leq (p^{1/d}+q^{1/d})^d|A|$, so that, by Lemma~\ref{lem:noplane}, $A$ cannot lie on a hyperplane. Suppose now  that $A$ is a counterexample to the lemma with $[\ZZ^d:\ang{A}]$  minimal. Let $A'$ be the image of $A$ in $\ZZ^d/L_1$. By Lemma \ref{lem:tri2}, one of the following possibilities holds:
\begin{enumerate}
\item $A'$ does not generate $\ZZ^d/L_1$;
\item $|\phi_1(A')+\phi_2(A')|>|A'|$;
\item $\cP\ZZ^d/L_1\subseteq A'$.
\end{enumerate}
We consider each case separately.\\

\noindent
\underline{Case 1: (1) holds, but not (2)}

\noindent
The fact that (1) holds means that $\ang{A}+L_1\neq\ZZ^d$, so it must be a strictly smaller sublattice of $\ZZ^d$ of some index $k>1$. Let $\cQ\in\Mat_d(\ZZ)$ be such that $\cQ\ZZ^d=\ang{A}+L_1$, so that $|\det\cQ|=k$. Since (2) does not hold, we have $\phi_1(A')+\phi_2(A')=\phi(A')$, so $\ang{\phi_1(A')+\phi_2(A')}=\phi(\ang{A'})$. Since $\phi$ is an isomorphism and $\ang{A'}$ is a subgroup of $\ZZ/L_1$ of index $k$, $\phi(\ang{A'})$ is a subgroup of $\ZZ^d/\cL_1\cP\ZZ^d$ of index $k$. Thus, $\ang{\cL_1 A+\cL_2 A}+\cL_1\cP\ZZ^d$ is a sublattice of $\ZZ^d$ of index $k$.

Let $\cQ'\in\Mat_d(\ZZ)$ be such that $\cQ'\ZZ^d=\ang{\cL_1 A+\cL_2 A}+\cL_1\cP\ZZ^d$, so that $|\det\cQ'|=k$. Notice that
$$|\cL_1 A+\cL_2 A|=|\cQ'^{-1}\cL_1\cQ(\cQ^{-1}A)+\cQ'^{-1}\cL_2\cQ(\cQ^{-1}A)|,$$
so we may replace the triple $(\cL_1,\cL_2,A)$ with $(\cQ'^{-1}\cL_1\cQ,\cQ'^{-1}\cL_2\cQ,\cQ^{-1}A)$. It is easy to see that $\cQ'^{-1}\cL_1\cQ,\cQ'^{-1}\cL_2\cQ$ are still irreducible and coprime and $\cQ^{-1}A\subset\ZZ^d$. However, this contradicts the minimality of $[\ZZ^d:\ang{A}]$, since  $[\ZZ^d:\ang{\cQ^{-1}A}]=[\ZZ^d:\ang{A}]/k$. 
\\

\noindent
\underline{Case 2: (2) holds}

\noindent
Let $I_1,\ldots,I_{pq}$ be the cosets of $\cP\ZZ^d$ with $0\in I_1$ and let $A_i=A\cap I_i$ for $i=1,\ldots,pq$. Note that the cosets $I_i$ are the intersections of the cosets of $\cP_1\ZZ^d$ and the cosets of $\cP_2\ZZ^d$. If $0<|A_i|\leq c|A|$ for some $i$, then 
condition 2 of Lemma~\ref{lem:bootstrap21} implies that
$$|\cL_1 A+\cL_2 A|\geq ((p^{1/d}+q^{1/d})^d-(1-c)\alpha)|A|-((p+q)D_1+D)|A|^{1-\sigma_1}.$$
We may therefore assume that $|A_i|> c|A|$ whenever $A_i\neq\emptyset$. Let $I_{i,k}$ be the cosets of $\cP\ZZ^d\cap \cL_2^{-1}\cL_1\cP\ZZ^d$ in $I_i$ for $k=1,\ldots,p$, where $0\in I_{1,1}$, and let $A_{i,k}=A\cap I_{i,k}=A_i\cap I_{i,k}$.

Suppose that $|A_{i,k}|>c^2|A|$ whenever $A_{i,k}\neq\emptyset$. By (2), $|\phi_1(A')+\phi_2(A')|>|A'|=|\phi(A')|$. Hence, since $\phi(A')\subseteq \phi_1(A')+\phi_2(A')$, there are  $a_1,a_2\in A$ such that 
$$\cL_1 a_1+\cL_2 a_2\not\in (\cL_1+\cL_2)a+\cL_1\cP\ZZ^d$$
for all $a\in A$. Take $i_1,k_1,i_2,k_2$ such that $a_1\in A_{i_1,k_1},a_2\in A_{i_2,k_2}$, so they are both non-empty. Then 
$$\cL_1 A_{i_1,k_1}+\cL_2 A_{i_2,k_2}\subset \cL_1 a_1+\cL_2 a_2+\cL_1\cP\ZZ^d,$$
which is disjoint from any of the $\cL_1 A_{i,k}+\cL_2 A_{i,k}$. Therefore,
\begin{align*}
|\cL_1 A+\cL_2 A| &\geq \sum_{i=1}^{pq}\sum_{k=1}^p |\cL_1 A_{i,k}+\cL_2 A_{i,k}|+|\cL_1 A_{i_1,k_1}+\cL_2 A_{i_2,k_2}|\\
&\geq ((p^{1/d}+q^{1/d})^d-\alpha)|A|-p^2qD_1|A|^{1-\sigma_1}+|A_{i_1,k_1}|\\
&\geq ((p^{1/d}+q^{1/d})^d-(1-c^2)\alpha)|A|-p^2qD_1|A|^{1-\sigma_1}.
\end{align*}

Otherwise, by translating if necessary, we may assume that $|A_{1,1}|\leq c^2|A| \leq c|A_1|$ and $0\in A_{1,1}$. 
Let $\cQ\in\Mat_d(\ZZ)$ be such that $\cQ\ZZ^d=\cL_1\ZZ^d\cap \cL_2\ZZ^d$, so that $|\det\cQ|=pq$. Set $\cM_i=\cQ^{-1}\cL_i\cP$. Since $\cL_i\cP\ZZ^d \subseteq \cL_1\ZZ^d\cap \cL_2\ZZ^d=\cQ\ZZ^d$, we have $\cM_i\ZZ^d\subseteq \ZZ^d$ for $i=1,2$, so $\cM_i\in\Mat_d(\ZZ)$. 
Moreover, $\cM_1,\cM_2$ are irreducible and coprime, with determinants of absolute value $p$ and $q$, respectively. 

If we let $B=\cP^{-1}A_1\subset\ZZ^d$, our aim now is to apply Lemma~\ref{lem:bootstrap21} to the sum $\cM_1 B+\cM_2 B$. 
Indeed, if we replace $\cP_1,\cP_2$ in that lemma by $\cP_1',\cP_2'$ chosen so that $\cP_1'\ZZ^d=\ZZ^d\cap \cM_2^{-1}\cM_1\ZZ^d$ and $\cP_2'\ZZ^d=\ZZ^d\cap \cM_1^{-1}\cM_2\ZZ^d$, the set $A_1$ by $B_1 = \cP^{-1}A_{1,1}$, which, since $A_{1,1}=A_1\cap \cL_2^{-1}\cL_1\cP\ZZ^d$, satisfies $B_1=B\cap \cP^{-1}\cL_2^{-1}\cL_1\cP\ZZ^d=B\cap \cM_2^{-1}\cM_1\ZZ^d$, and $A_1^1$ by an appropriate non-empty subset $B_1^1\subseteq B_1$ (which is possible since $B_1$ contains 0), then we have $0<|B_1^1|\leq |B_1|\leq c|B|$, so condition 2 of Lemma~\ref{lem:bootstrap21} holds. 
Hence, by that lemma,
$$|\cM_1 B+\cM_2 B|\geq ((p^{1/d}+q^{1/d})^d-(1-c)\alpha)|B|-((p+q)D_1+D)|B|^{1-\sigma_1}.$$
This implies that 
\begin{align*}
|\cL_1 A_1+\cL_2 A_1| &= |\cQ^{-1}\cL_1\cP(\cP^{-1}A_1)+\cQ^{-1}\cL_2\cP (\cP^{-1}A_1)|\\
&= |\cM_1 B+\cM_2 B|\\
&\geq ((p^{1/d}+q^{1/d})^d-(1-c)\alpha)|B|-((p+q)D_1+D)|B|^{1-\sigma_1}\\
&\geq ((p^{1/d}+q^{1/d})^d-(1-c)\alpha)|A_1|-((p+q)D_1+D)|A|^{1-\sigma_1}.
\end{align*}
Thus, since $|A_1|\geq c|A|$,
\begin{align*}
|\cL_1 A+\cL_2 A| &\geq \sum_{i=2}^{pq}|\cL_1 A_i+\cL_2 A_i|+|\cL_1 A_1+\cL_2 A_1|\\
&\geq ((p^{1/d}+q^{1/d})^d-\alpha)(|A|-|A_1|)-pqD_1|A|^{1-\sigma_1}\\
&\quad +((p^{1/d}+q^{1/d})^d-(1-c)\alpha)|A_1|-((p+q)D_1+D)|A|^{1-\sigma_1}\\
&\geq ((p^{1/d}+q^{1/d})^d-\alpha)|A|+c\alpha|A_1|-((p+q+pq)D_1+D)|A|^{1-\sigma_1}\\
&\geq ((p^{1/d}+q^{1/d})^d-\alpha)|A|+c^2\alpha|A|-((p+q+pq)D_1+D)|A|^{1-\sigma_1}\\
&\geq ((p^{1/d}+q^{1/d})^d-(1-c^2)\alpha)|A|-(4p^2q^2D_1+D)|A|^{1-\sigma_1}.
\end{align*}

\vspace{2mm}
\noindent
\underline{Case 3: (3) holds}

\noindent
Let $A''$ be the image of $A$ in $\ZZ^d/L_2$. If we apply Lemma \ref{lem:tri2} to $A''$, but with the roles of $\cL_1,\cL_2$ swapped, we arrive at three similar cases. If either of the first two occurs, then we are again done as above. Otherwise, the third case holds, i.e., $\cP\ZZ^d/L_2\subseteq A''$. Define $A_1,\cM_1,\cM_2,B$ as in Case 2 and partition $B$ into $B_1\cup\cdots\cup B_p$, where the $B_i$ belong to different cosets of $\ZZ^d\cap \cM_2^{-1}\cM_1\ZZ^d$, and into $B^1\cup\cdots\cup B^q$, where the $B^j$ belong to different cosets of $\ZZ^d\cap \cM_1^{-1}\cM_2\ZZ^d$. 

Since $\cP\ZZ^d/L_1\subseteq A'$, we have $\cP\ZZ^d\subseteq A+L_1$ and so $\cP\ZZ^d\subseteq A_1+L_1$, since $A_1=A\cap \cP\ZZ^d$. Thus, $\ZZ^d=\cP^{-1}A_1+\cP^{-1}L_1$, which means that $B=\cP^{-1}A_1$
intersects every coset of $\cP^{-1}L_1=\ZZ^d\cap \cP^{-1}\cL_2^{-1}\cL_1\cP\ZZ^d=\ZZ^d\cap \cM_2^{-1}\cM_1\ZZ^d$, so all the $B_i$ are non-empty. Similarly, all of the $B^j$ are non-empty. Thus,  condition 1 of Lemma \ref{lem:bootstrap21} holds, so that
$$|\cM_1 B+\cM_2 B|\geq ((p^{1/d}+q^{1/d})^d-(1-c)\alpha)|B|-((p+q)D_1+D)|B|^{1-\sigma_1}.$$
The same calculation as  in Case 2 then shows that
\begin{align*}
|\cL_1 A+\cL_2 A| 
&\geq ((p^{1/d}+q^{1/d})^d-(1-c^2)\alpha)|A|-(4p^2q^2D_1+D)|A|^{1-\sigma_1},  
\end{align*}
as required. 
\end{proof}

\begin{thm} \label{thm:main2}
Let $\cL_1,\cL_2\in\Mat_d(\ZZ)$ be irreducible, coprime linear transformations with $|\det\cL_1|=p$ and $|\det\cL_2|=q$.
Then there are constants $\sigma_2>0$ and $D_2>0$ depending only on $d$, $p$ and $q$ such that 
$$|\cL_1 A+\cL_2 A|\geq (p^{1/d}+q^{1/d})^d|A|-D_2|A|^{1-\sigma_2}$$
for all finite $A\subset \ZZ^d$.
\end{thm}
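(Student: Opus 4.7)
The plan is to deduce Theorem~\ref{thm:main2} from Lemma~\ref{lem:bootstrap22} by the same bootstrap strategy used in the proof of Theorem~\ref{thm:main1}: start from the trivial bound $|\cL_1 A + \cL_2 A| \geq |A|$ and iteratively tighten it. The essential point is that each application of Lemma~\ref{lem:bootstrap22} shrinks the ``deficit'' $\alpha$ multiplicatively by the fixed factor $1-c^2$ (where $c$ depends only on $d, p, q$), while inflating the error constant by the fixed multiplier $4p^2q^2$. Performing this iteration about $\log|A|$ times and then balancing the two resulting error terms will yield the claimed power-saving estimate.

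Concretely, I would begin from the trivial inequality $|\cL_1 A + \cL_2 A| \geq |A|$, which matches the hypothesis of Lemma~\ref{lem:bootstrap22} with $\alpha_0 := (p^{1/d}+q^{1/d})^d - 1$ (noting $0 < \alpha_0 < (p^{1/d}+q^{1/d})^d$) and $D_1^{(0)} := 0$. Letting $\sigma_1, D > 0$ be the constants supplied by Lemma~\ref{lem:bootstrap22}, a straightforward induction on $m$ shows that after $m$ applications one has
$$|\cL_1 A + \cL_2 A| \geq \paren{(p^{1/d}+q^{1/d})^d - (1-c^2)^m \alpha_0}|A| - D^{(m)} |A|^{1-\sigma_1},$$
where the recursion $D^{(j+1)} = 4p^2q^2 D^{(j)} + D$ gives $D^{(m)} \leq D \cdot (4p^2q^2)^{m+1}$. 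This iteration is legitimate because the hypothesis of Lemma~\ref{lem:bootstrap22} is quantified uniformly over all irreducible, coprime pairs $(\cL_1, \cL_2)$ with $|\det\cL_1| = p$ and $|\det\cL_2| = q$, so the improved bound produced at each step can be fed back in as input for the next step.

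I would then set $m = \lfloor \gamma \log|A| \rfloor$ for a small constant $\gamma > 0$ to be optimized. The two error terms then become essentially
$$(1-c^2)^m \alpha_0 |A| = \alpha_0 \, |A|^{1 - \gamma \log(1/(1-c^2))} \quad \text{and} \quad D^{(m)} |A|^{1-\sigma_1} \leq D' |A|^{1 - \sigma_1 + \gamma \log(4p^2q^2)}$$
for an appropriate constant $D'$. Choosing $\gamma$ small enough that $\gamma \log(4p^2q^2) < \sigma_1$, both exponents are strictly less than $1$, and both error terms are bounded by a constant multiple of $|A|^{1-\sigma_2}$ for
$$\sigma_2 = \min\paren{\gamma \log\paren{\tfrac{1}{1-c^2}},\; \sigma_1 - \gamma \log(4p^2q^2)} > 0.$$
Taking $D_2$ large enough to absorb all implicit constants (and to handle the trivial regime of small $|A|$, where $\gamma \log|A| < 1$) then yields the theorem.

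Since the real technical difficulty has already been handled in Lemma~\ref{lem:discbm2} and Lemma~\ref{lem:bootstrap22}, I do not anticipate any substantive obstacle here; the remaining proof is simply a clean optimization of $m$ in terms of $|A|$, entirely parallel to the argument completing Theorem~\ref{thm:main1}. The only mild care needed is in verifying the integer rounding of $m$ and checking that the hypothesis $\alpha_m < (p^{1/d}+q^{1/d})^d$ of Lemma~\ref{lem:bootstrap22} persists through the iteration, both of which are immediate.
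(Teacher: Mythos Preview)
Your proposal is correct and is exactly the approach the paper takes: the paper's entire proof of Theorem~\ref{thm:main2} is the single sentence ``This follows from Lemma~\ref{lem:bootstrap22} just as Theorem~\ref{thm:main1} follows from Lemma~\ref{lem:bootstrap2},'' and your write-up simply spells out that analogy. The only cosmetic difference is that Lemma~\ref{lem:bootstrap22} always gives the multiplicative gain $(1-c^2)$ rather than a $\max$, so the preliminary step in the proof of Theorem~\ref{thm:main1} (iterating until $\alpha<1$) is unnecessary here, as you implicitly observed.
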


\begin{proof}
This follows from Lemma~\ref{lem:bootstrap22} just as Theorem~\ref{thm:main1} follows from Lemma~\ref{lem:bootstrap2}.
\end{proof}

\section{The size of $A+\lambda \cdot A$ for algebraic $\lambda$} \label{sec:alg}

In this section, we prove Theorem~\ref{thm:alg}, our lower bound on $|A+\lambda \cdot A|$ for algebraic $\lambda \in\RR$. 
Though our estimate applies for all finite $A\subset \RR$,   
the following simple lemma of Krachun and Petrov \cite[Lemma 2.1]{KP20} allows us to restrict attention to sets $A\subset \QQ[\lambda]$.

\begin{lem} \label{lem:alg}
Suppose that $\lambda\in\CC$ and $A$ is a finite set of complex numbers. Then there exists a finite set $B\subset\QQ[\lambda]$ such that $|B| = |A|$ and $|B + \lambda \cdot B| \leq |A +\lambda \cdot A|$.
\end{lem}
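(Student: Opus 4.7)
The plan is to produce $B$ as the image of $A$ under a carefully chosen $\QQ[\lambda]$-linear map into $\QQ[\lambda]$. Let $V \subseteq \CC$ be the $\QQ[\lambda]$-submodule generated by $A = \{a_1,\dots,a_n\}$. The ring $\QQ[\lambda]$ is a PID (it is either a polynomial ring in one variable over $\QQ$ when $\lambda$ is transcendental, or a field when $\lambda$ is algebraic), and $\CC$ is torsion-free as a $\QQ[\lambda]$-module since it is a field; hence the finitely generated module $V$ is free. Fix a $\QQ[\lambda]$-basis $e_1,\dots,e_m$ of $V$ and write $a_i = \sum_{\alpha=1}^{m} c_{i\alpha} e_\alpha$ with $c_{i\alpha}\in\QQ[\lambda]$.

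For a parameter $t\in\QQ$ to be chosen, define the $\QQ[\lambda]$-module homomorphism $\psi_t\colon V\to \QQ[\lambda]$ by $\psi_t(e_\alpha)=t^{\alpha-1}$, and set $B=\psi_t(A)\subseteq\QQ[\lambda]$. Since $\psi_t$ is $\QQ[\lambda]$-linear and $A+\lambda A\subseteq V$, we have $\psi_t(a_i+\lambda a_j)=\psi_t(a_i)+\lambda\psi_t(a_j)$, so $B+\lambda B=\psi_t(A+\lambda A)$, which immediately gives $|B+\lambda B|\leq|A+\lambda A|$. It remains to choose $t$ so that $\psi_t$ is injective on $A$, guaranteeing $|B|=|A|$.

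For each pair $i\neq i'$, the quantity $\psi_t(a_i-a_{i'})=\sum_{\alpha}(c_{i\alpha}-c_{i'\alpha})t^{\alpha-1}$ is the value at $t$ of the polynomial $f_{ii'}(T)=\sum_\alpha (c_{i\alpha}-c_{i'\alpha})T^{\alpha-1}\in \QQ[\lambda][T]$. Because $a_i\neq a_{i'}$ and the $e_\alpha$ are $\QQ[\lambda]$-linearly independent, some coefficient of $f_{ii'}$ is non-zero, so $f_{ii'}$ is a non-zero polynomial over the integral domain $\QQ[\lambda]$ and has only finitely many roots in $\QQ$. Picking $t\in\QQ$ outside the finite union of root sets of the $f_{ii'}$ across all $\binom{n}{2}$ pairs completes the construction. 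The only mildly non-routine step is the freeness of $V$, which reduces to the standard fact that finitely generated torsion-free modules over a PID are free; the rest is a straightforward specialization argument.
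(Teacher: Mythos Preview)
Your argument is correct. The paper does not give its own proof of this lemma; it simply quotes it as \cite[Lemma~2.1]{KP20}. Your approach---observing that $\QQ[\lambda]$ is a PID, that the finitely generated $\QQ[\lambda]$-submodule $V\subseteq\CC$ is torsion-free and hence free, and then specialising a basis to a generic $t\in\QQ$---is a clean, self-contained way to establish the result. The only point worth expanding slightly is the torsion-freeness: the key is not just that $\CC$ is a field but that $\QQ[\lambda]$ sits inside $\CC$ as a subring, so a nonzero scalar in $\QQ[\lambda]$ is a nonzero element of $\CC$ and therefore a unit there.
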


Suppose now that $\lambda$ has minimal polynomial $p(x)=x^d+a_{d-1}x^{d-1}+\cdots +a_0\in\QQ[x]$. If  we view $\QQ[\lambda]$ as a $d$-dimensional $\QQ$-vector space with basis $1,\lambda,\lambda^2,\ldots,\lambda^{d-1}$, then multiplication by $\lambda$ is given by the linear transformation
$$\cL=\begin{pmatrix}
0 & 0 & 0 & \cdots & 0 & -a_0\\
1 & 0 & 0 & \cdots & 0 & -a_1\\
0 & 1 & 0 & \cdots & 0 & -a_2\\
\vdots & \vdots & \vdots & \ddots & \vdots & \vdots\\
0 & 0 & 0 & \cdots & 1 & -a_{d-1}
\end{pmatrix}\in \GL_d(\QQ).$$
Thus, the problem reduces to that of bounding $|A+\cL A|$ for $A\subset \QQ^d$. Let $b$ be the smallest positive integer such that $ba_i\in\ZZ$ for all $i=0,1,\ldots,d-1$. 
Then, if we let 
$$\cL_1=\begin{pmatrix}
1 & 0 & \cdots & 0 & 0\\
0 & 1 & \cdots & 0 & 0\\
\vdots & \vdots & \ddots & \vdots & \vdots\\
0 & 0 & \cdots & 1 & 0\\
0 & 0 & \cdots & 0 & b
\end{pmatrix},\,\quad 
\cL_2=\begin{pmatrix}
0 & 0 & 0 & \cdots & 0 & -ba_0\\
1 & 0 & 0 & \cdots & 0 & -ba_1\\
0 & 1 & 0 & \cdots & 0 & -ba_2\\
\vdots & \vdots & \vdots & \ddots & \vdots & \vdots\\
0 & 0 & 0 & \cdots & 1 & -ba_{d-1}
\end{pmatrix}\in \Mat_d(\ZZ),$$
we see that $|A+\cL A|=|\cL_1(\cL_1^{-1} A)+\cL_2(\cL_1^{-1} A)|$. Setting $B=\cL_1^{-1} A$, the problem becomes that of bounding $|\cL_1 B+\cL_2 B|$ for $B\subset \QQ^d$. By scaling, we may even assume that $B\subset \ZZ^d$. Therefore, in order to apply Theorem~\ref{thm:main2}, we only need to verify that $\cL_1,\cL_2$ are irreducible and coprime. For this, we now derive general conditions for irreducibility and coprimeness. We first look at irreducibility.

\begin{thm} \label{thm:irred}
$P,Q\in\Mat_d(\QQ)$ are irreducible if and only if they are invertible and the characteristic polynomial of $P^{-1}Q$ is irreducible over $\QQ$.
\end{thm}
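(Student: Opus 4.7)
The plan is to break the biconditional into two reductions. First, I would argue that if $(P,Q)$ is irreducible then both $P$ and $Q$ must be invertible: if, say, $P$ is singular, take $U = \ker P$, which is a non-trivial proper subspace (if $P = 0$, take instead any $1$-dimensional $U$). Then $PU = 0 \subseteq V$ for any $V$, and since $\dim(QU) \leq \dim U$ we can choose a proper subspace $V$ of $\QQ^d$ of dimension exactly $\dim U$ containing $QU$. This witnesses reducibility.

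With invertibility established, set $M = P^{-1}Q$ and show that $(P,Q)$ is irreducible if and only if $M$ has no non-trivial $\QQ$-invariant subspace. For one direction, if $MU \subseteq U$ with $U$ non-trivial, then $QU = PMU \subseteq PU$, so $V := PU$ satisfies $\dim V = \dim U$, $PU \subseteq V$, and $QU \subseteq V$. For the converse, a reducibility pair $(U,V)$ satisfies $PU = V$ (because $P$ is invertible and $\dim PU = \dim U = \dim V$), so $QU \subseteq PU$ rearranges to $MU \subseteq U$.

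It then remains to prove the classical fact that an operator $M$ on $\QQ^d$ has no non-trivial $\QQ$-invariant subspace if and only if its characteristic polynomial is irreducible over $\QQ$. I would deduce this from the rational canonical form: decompose $\QQ^d \cong \bigoplus_i \QQ[x]/(p_i^{a_i})$ as a $\QQ[x]$-module with $x$ acting as $M$. If this decomposition has more than one summand, the individual summands furnish non-trivial invariant subspaces; if any $a_i > 1$, then $(p_i)/(p_i^{a_i})$ provides one inside that summand. So absence of non-trivial invariants forces $\QQ^d \cong \QQ[x]/(p)$ for some irreducible $p$, making the characteristic polynomial equal to $p$, which is irreducible. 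Conversely, if the characteristic polynomial $p$ of $M$ is irreducible, then since the minimal polynomial is a non-constant monic divisor of $p$ it must equal $p$, forcing the decomposition to be the single cyclic module $\QQ[x]/(p)$, which is a field and hence has no non-trivial submodules.

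I do not expect any real obstacle here: the first two reductions are elementary linear algebra, and the last step is a textbook consequence of the structure theorem for finitely generated modules over the PID $\QQ[x]$. The only care needed is to track the invertibility of $P$ when translating between the $(P,Q)$-formulation and the $M$-formulation, which is why the invertibility step must come first.
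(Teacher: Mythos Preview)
Your proposal is correct and follows essentially the same architecture as the paper's proof: first establish invertibility, then reduce irreducibility of $(P,Q)$ to $M=P^{-1}Q$ having no non-trivial $\QQ$-invariant subspace, and finally equate that with irreducibility of the characteristic polynomial. The only genuine difference is in the final step: the paper argues directly (an invariant subspace $U$ gives a proper factor via the characteristic polynomial of $M|_U$; conversely, a factorisation $p=fg$ forces $f(M)$ or $g(M)$ to be singular, and the cyclic span of a vector in its kernel is a non-trivial invariant subspace), whereas you invoke the structure theorem for $\QQ[x]$-modules. Your route is slightly heavier machinery but equally valid, and your reduction from $(P,Q)$-irreducibility to $M$-invariance is spelled out more explicitly than in the paper.
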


\begin{proof}
Suppose $P,Q$ are irreducible. If $P$, say, is not invertible, then there is a one-dimensional subspace $U\subset \QQ^d$ such that $PU=0$. But then both $PU$ and $QU$ lie in the subspace $QU$ of dimension at most 1, contradicting irreducibility. 

Note that $P,Q$ are irreducible iff $R=P^{-1}Q$ has no non-trivial invariant subspace over $\QQ$. Let $p(x)\in\QQ[x]$ be the characteristic polynomial of $P^{-1}Q$. If $P^{-1}Q$ has a non-trivial invariant subspace $U$, then restricting to $U$ gives a linear transformation $R|_U:U\to U$. But the characteristic polynomial of $R|_U$ divides $p$, so $p$ is reducible.

Conversely, suppose that $p=fg$ is reducible, with $\deg f,\deg g<d$. Then at least one of $f(R),g(R)$ is not invertible, since $0=p(R)=f(R)g(R)$. 
Without loss of generality, assume that $f(R)$ is not invertible, so there is some $v\in \QQ^d-\{0\}$ such that $f(R)v=0$. If $f$ has degree $e<d$, then $R^e v$ lies in the space $U=\ang{v,Rv,\ldots,R^{e-1} v}$. Thus, $U$ is a non-trivial invariant subspace.
\end{proof}

For our coprimeness condition, we need the following lemma.

\begin{lem} \label{lem:minor}
Let $P\in\Mat_d(\QQ)$ and $Q\in \Mat_d(\ZZ)$ be such that $QP\in\Mat_d(\ZZ)$. For $1\leq k\leq d$, let $m$ be a $k\times k$ minor of $P$, i.e., the determinant of a $k\times k$ submatrix. Then $m\det(Q)\in \ZZ$.
\end{lem}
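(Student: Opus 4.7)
The plan is to reduce to the case where $Q$ is diagonal via its Smith normal form, and then track how the elementary divisors clear the denominators of the $k \times k$ minors of $P$.

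First, I would write $Q = UDV$ in Smith normal form, with $U, V \in \GL_d(\ZZ)$ and $D = \mathrm{diag}(d_1, \ldots, d_d)$ satisfying $\prod_i d_i = |\det(Q)|$. Setting $P' = VP$, the hypothesis $QP \in \Mat_d(\ZZ)$ becomes $DP' = U^{-1}(QP) \in \Mat_d(\ZZ)$, which says that the $i$-th row of $P'$ lies in $\tfrac{1}{d_i}\ZZ^d$. Since $D$ is diagonal, for any $k$-element subsets $S, J \subseteq [d]$ one has $(DP')_{S,J} = \mathrm{diag}(d_s : s \in S) \cdot P'_{S,J}$, and taking determinants yields
$$\Bigl(\prod_{s \in S} d_s\Bigr) \det(P'_{S,J}) = \det\bigl((DP')_{S,J}\bigr) \in \ZZ.$$

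Next, because $V^{-1} \in \Mat_d(\ZZ)$, I would apply the Cauchy--Binet formula to $P = V^{-1} P'$: for any $k$-element subsets $I, J \subseteq [d]$,
$$\det(P_{I,J}) = \sum_{|S|=k} \det\bigl((V^{-1})_{I,S}\bigr)\, \det(P'_{S,J}),$$
with every coefficient $\det((V^{-1})_{I,S}) \in \ZZ$. Multiplying through by $|\det(Q)| = \prod_i d_i$, each term in the sum becomes
$$\det\bigl((V^{-1})_{I,S}\bigr) \cdot \Bigl(\prod_{s \notin S} d_s\Bigr) \cdot \Bigl(\prod_{s \in S} d_s \cdot \det(P'_{S,J})\Bigr),$$
which is a product of three integers. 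Summing shows $|\det(Q)| \cdot m \in \ZZ$ for any $k \times k$ minor $m$ of $P$, and hence $\det(Q) \cdot m \in \ZZ$, as required.

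I do not anticipate any serious obstacle, since the argument is essentially a bookkeeping exercise enabled by Smith normal form. The only point of care is the Cauchy--Binet identity for minors of a matrix product, which is what converts the integer linear relation $P = V^{-1}P'$ between matrices into a corresponding integer linear relation between their $k \times k$ minors. Once this is in hand, the ``extra'' factors $\prod_{s \notin S} d_s$ extracted from $|\det(Q)|$ absorb any residual denominator coming from the individual rows of $P'$.
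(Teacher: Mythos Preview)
Your proof is correct, but it takes a different route from the paper's. The paper gives a shorter, more direct argument: for the minor $m$ of $P$ on rows $S$ and columns $T$, it constructs a $d\times d$ matrix $R$ by keeping the $T$ columns of $P$, zeroing out the $S\times T^c$ block, and putting the identity on the $S^c\times T^c$ block; then $\det R=\pm m$, and $QR$ is visibly an integer matrix (its $T$ columns come from $QP$, its other columns from $Q$), so $m\det Q=\pm\det(QR)\in\ZZ$. Your approach via Smith normal form and Cauchy--Binet is perfectly valid and conceptually natural---it makes transparent exactly how the elementary divisors of $Q$ clear the row-by-row denominators of $P'$---but it invokes more machinery than necessary. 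The paper's trick sidesteps both Smith normal form and Cauchy--Binet by embedding the $k\times k$ submatrix into a full-sized matrix whose product with $Q$ is manifestly integral. (One minor point: your argument tacitly assumes $\det Q\neq 0$ when you invert $d_i$, though this is harmless since the singular case is trivial; your alternative phrasing via $(DP')_{S,J}$ avoids even this.)
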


\begin{proof}
Let $m$ be the $k\times k$ minor corresponding to rows $S\subseteq [d]$ and columns $T\subseteq [d]$. Construct a matrix $R\in\Mat_d(\QQ)$ as follows: the $T$ columns of $R$ are just the $T$ columns of $P$; the $S\times T^c$ submatrix of $R$ is all zeroes; and the $S^c\times T^c$ submatrix of $R$ is the identity matrix. Then $\det R=\pm m$, so that $\det(QR)=\pm m\det Q$. But the $T$ columns of $QR$ are the $T$ columns of $QP$, which has all integer entries, and each of the other columns of $QR$ is a column of $Q$, which also has integer entries. Thus, $QR\in\Mat_d(\ZZ)$, so that $m\det Q=\pm \det(QR)\in\ZZ$.
\end{proof}

\begin{thm} \label{thm:coprime}
Suppose that $P,Q\in\Mat_d(\ZZ)$ are irreducible and $p(x)\in\QQ[x]$ is the characteristic polynomial of $P^{-1}Q$. Then $P,Q$ are coprime if and only if $|\det P|$ is the smallest positive integer $g$ such that $gp\in \ZZ[x]$.
\end{thm}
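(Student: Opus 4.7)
The plan is to prove both directions of the equivalence by exploiting the companion matrix of $p$ and translating coprimeness into a comparison of lattice volumes. I would start by observing that $|\det P|\cdot p(x) = \pm\det(xP - Q) \in \ZZ[x]$, since $P,Q \in \Mat_d(\ZZ)$ implies $\det(xP-Q) \in \ZZ[x]$. Hence the smallest positive integer $c_{\min}$ with $c_{\min}p \in \ZZ[x]$ always divides $|\det P|$, and the theorem reduces to the equivalence of $c_{\min} < |\det P|$ and $P,Q$ not being coprime.

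For the easy direction (not coprime $\Rightarrow c_{\min} < |\det P|$), given $\cP,\cQ \in \GL_d(\QQ)$ with $0 < |\det\cP\det\cQ| < 1$ and $P' := \cP P\cQ,\ Q' := \cP Q\cQ \in \Mat_d(\ZZ)$, the matrix $P'^{-1}Q' = \cQ^{-1}(P^{-1}Q)\cQ$ is conjugate to $P^{-1}Q$ and so has the same characteristic polynomial $p$. Applying the baseline observation to $P',Q'$ shows $|\det P'|\,p \in \ZZ[x]$, where $|\det P'| = |\det P|\cdot|\det\cP\det\cQ|$ is a positive integer strictly less than $|\det P|$, giving $c_{\min} \le |\det P'| < |\det P|$.

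For the harder direction ($c_{\min} < |\det P| \Rightarrow$ not coprime), let $c := c_{\min}$ and let $R$ denote the companion matrix of $p$. My plan is to construct $P' \in \Mat_d(\ZZ)$ with $|\det P'| = c$ such that $Q' := P'R$ also lies in $\Mat_d(\ZZ)$. Since $p$ is irreducible by Theorem~\ref{thm:irred}, $P^{-1}Q$ and $R$ share the same rational canonical form and are therefore conjugate via some $\cQ \in \GL_d(\QQ)$, i.e.\ $\cQ^{-1}(P^{-1}Q)\cQ = R$. Setting $\cP := P'\cQ^{-1}P^{-1}$ then yields $\cP P\cQ = P'$ and $\cP Q\cQ = P'R = Q'$, both integer, with $|\det\cP\det\cQ| = c/|\det P| < 1$, so $P,Q$ are not coprime.

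The main obstacle is the construction of $P'$. Writing $p(x) = x^d + c_{d-1}x^{d-1} + \cdots + c_0$ and using the standard companion form for $R$ (shifted identity except for the last column $(-c_0,-c_1,\ldots,-c_{d-1})^T$), a direct calculation shows that $P'R \in \Mat_d(\ZZ)$ if and only if every row $(r_1,\ldots,r_d) \in \ZZ^d$ of $P'$ satisfies $\sum_{k=1}^{d} c_{k-1}r_k \in \ZZ$. The set of such rows is the kernel of the homomorphism $\ZZ^d \to \QQ/\ZZ$ sending $r$ to $\sum_k c_{k-1}r_k \bmod \ZZ$, whose image is the cyclic subgroup of $\QQ/\ZZ$ generated by the residues $c_i \bmod \ZZ$; by definition of $c = c_{\min}$, this cyclic group has order exactly $c$, so the kernel is a sublattice of $\ZZ^d$ of index $c$. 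Taking $P'$ to be any integer matrix whose rows form a $\ZZ$-basis of this sublattice then yields $|\det P'| = c$ with $Q' = P'R \in \Mat_d(\ZZ)$ built in, completing the argument.
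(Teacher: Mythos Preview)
Your proof is correct and shares the same skeleton as the paper's: both establish $c_{\min}\mid |\det P|$ in general, and both invoke the irreducibility of $p$ (Theorem~\ref{thm:irred}) to pass to the companion matrix and exhibit a witness $(\cP,\cQ)$ with $|\det\cP\det\cQ|=c_{\min}/|\det P|$. The implementations differ in two places, and in opposite directions. For the divisibility step, the paper proves a separate lemma on minors (Lemma~\ref{lem:minor}) and uses that the coefficients of $p$ are integer combinations of the minors of $S^{-1}P^{-1}QS$; your replacement, $|\det P'|\cdot p(x)=\pm\det(xP'-Q')\in\ZZ[x]$ for any integral pair $P',Q'$ with $P'^{-1}Q'$ conjugate to $P^{-1}Q$, is more elementary and makes that lemma unnecessary. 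For the witness construction, the paper is the more concrete one: working with the companion form that carries the $-a_i$ in the last \emph{row}, it simply takes $P'=\mathrm{diag}(1,\ldots,1,c_{\min})$, so that $P'$ times the companion matrix is integral by inspection. Your kernel-of-a-homomorphism argument reaches the same conclusion but less explicitly; in effect the paper has written down a specific basis of your index-$c_{\min}$ lattice.
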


\begin{proof}
Let $g$ be the smallest positive integer such that $gp\in\ZZ[x]$. Let $R,S\in\GL_d(\QQ)$ be such that $RPS,RQS \in\Mat_d(\ZZ)$. Let $M=(RPS)^{-1}RQS=S^{-1}P^{-1}QS\in \Mat_d(\QQ)$. Then the characteristic polynomial of $M$ is again $p$. By Lemma~\ref{lem:minor} with $M$ and $RPS$ as $P$ and $Q$, if $m$ is any $k\times k$ minor of $M$, then $m\det(RPS)\in\ZZ$. Suppose $p(x)=x^d+a_{d-1}x^{d-1}+\cdots+a_0$. By looking at the expansion of $p(x)=\det(xI-M)$, we see that $a_{d-k}$ can be written as a $\ZZ$-linear combination of $k\times k$ minors of $M$. Thus, $a_{d-k}\det(RPS)\in\ZZ$ for all $k$, so $g\mid \det(RPS)=\pm |\det P|\det(RS)$. In particular, if we take both $R$ and $S$ to be the identity matrix, then $g\mid |\det P|$.

Suppose now that  $g=|\det P|$. Then this implies that $|\det(RS)|\geq 1$, so $P,Q$ are indeed coprime.
Conversely, suppose that $P,Q$ are coprime. 
Consider the rational canonical form of $P^{-1}Q$, which is a block diagonal matrix similar to $P^{-1}Q$ where each block looks like
$$\begin{pmatrix}
0 & 1 & 0 & \cdots & 0\\
0 & 0 & 1 & \cdots & 0\\
0 & 0 & 0 & \cdots & 0\\
\vdots & \vdots & \vdots & \ddots & \vdots\\
0 & 0 & 0 & \cdots  & 1\\
-c_0 & -c_1 & -c_2 & \cdots & -c_{k-1}
\end{pmatrix}.$$
The characteristic polynomial of such a block is $x^k+c_{k-1}x^{k-1}+\dots+c_0$ and the characteristic polynomial $p$ of $P^{-1}Q$ is the product of the characteristic polynomials of its blocks. But, by Theorem~\ref{thm:irred}, $p(x)=x^d+a_{d-1}x^{d-1}+\cdots+a_0$ is irreducible, so the rational canonical form of $P^{-1}Q$ consists of a single block. That is, there is some $S\in\GL_d(\QQ)$ such that 
$$S^{-1}P^{-1}QS=\begin{pmatrix}
0 & 1 & 0 & \cdots & 0\\
0 & 0 & 1 & \cdots & 0\\
0 & 0 & 0 & \cdots & 0\\
\vdots & \vdots & \vdots & \ddots & \vdots\\
0 & 0 & 0 & \cdots  & 1\\
-a_0 & -a_1 & -a_2 & \cdots & -a_{d-1}
\end{pmatrix}.$$
Let $D$ be the diagonal matrix with entries $(1,1,\ldots,1,g)$. Then $D$ and $DS^{-1}P^{-1}QS$ are integer matrices. Now set $R=DS^{-1}P^{-1}$, so that $RPS,RQS\in\Mat_d(\ZZ)$. By coprimeness, $|\det R \det S|\geq 1$. But this implies that $g/|\det P|\geq 1$, so $|\det P|\leq g$. However, from before, we have $g\mid |\det P|$, so that $|\det P|=g$, as required.
\end{proof}

Using Theorems~\ref{thm:irred} and~\ref{thm:coprime}, it is now a simple matter to verify that $\cL_1,\cL_2$ are irreducible and coprime. Thus, by Theorem~\ref{thm:main2}, we have that if $\lambda\in\RR$ is an algebraic number with minimal polynomial $p(x)=a_dx^d+\cdots+a_0\in\ZZ[x]$, where all the $a_i$ are coprime, then there are $D, \sigma>0$ such that 
$$|A+\lambda\cdot A|\geq (|\det (\cL_1)|^{1/d}+|\det(\cL_2)|^{1/d})^d|A| - D|A|^{1-\sigma}$$
holds for all finite $A\subset \QQ[\lambda]$. But, taking the rescaling of the characteristic polynomial into account, $|\det(\cL_1)| = |a_d|$ and $|\det(\cL_2)| = |a_0|$, completing the proof of Theorem~\ref{thm:alg}. Though we have not stressed the point before, it is worth noting that the same proof also goes through for $A \subset \CC$ and $\lambda \in \CC$.

\section{Concluding remarks}

\noindent
{\bf Better bounds.}
Although Theorem~\ref{thm:main} can be tight, we suspect that there is a stronger general bound. In analogy with the algebraic number setting, given $\cL\in\Mat_d(\QQ)$ with minimal polynomial $f(x)\in\ZZ[x]$, which we assume to have coprime coefficients, suppose that $f(x) = \prod_{i=1}^d (a_ix+ b_i)$ is a full complex factorisation of $f$ and let $H(\cL)=\prod_{i=1}^d(|a_i|+|b_i|)$. Our conjecture, a variant of a recent conjecture of Krachun and Petrov~\cite[Conjecture~2]{KP20} that was itself inspired by a continuous analogue~\cite[Theorem~2]{KP20}, is then as follows.

\begin{conj} \label{conj:better}
Let $\cL_1,\cL_2\in\Mat_d(\ZZ)$ be irreducible. Then, for any finite subset $A$ of $\ZZ^d$,
$$|\cL_1 A+\cL_2 A|\geq H(\cL_1^{-1}\cL_2)|A|-o(|A|).$$
\end{conj}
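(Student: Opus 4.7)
The plan is to adapt the framework of the paper, replacing the coarse Brunn--Minkowski input of Lemma \ref{lem:discbm2}, which only knows about the determinant of $\cL_1^{-1}\cL_2$, with a finer inequality sensitive to the full eigenvalue data of $\cL_1^{-1}\cL_2$.

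As in Section \ref{sec:main}, begin by reducing to the case $\cL_1 = I$ and $\cL := \cL_2 \in \Mat_d(\QQ)$. Irreducibility forces the characteristic polynomial of $\cL$ to be an irreducible $f \in \QQ[x]$ of degree $d$ by Theorem \ref{thm:irred}, so $K := \QQ[\cL] = \QQ(\lambda)$ is a degree-$d$ number field and $\cL$ acts as multiplication by $\lambda$ on $K \cong \QQ^d$. In this viewpoint, the target $H(\cL) = \prod_j(|a_j|+|b_j|)$ is exactly the ratio $|A + \lambda A|/|A|$ achieved asymptotically by a power-basis box $A = \{x_0 + x_1\lambda + \cdots + x_{d-1}\lambda^{d-1} : 0 \le x_i < N_i\}$ with aspect ratios $N_i$ chosen optimally from $f$, directly generalizing the Krachun--Petrov example for $\lambda = \sqrt{2}$.

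The main new ingredient would be a refined discrete Brunn--Minkowski inequality replacing Lemma \ref{lem:discbm2}: for $A \subset \mathcal{O}_K$ with $|A + \lambda A| \le K_0|A|$ and $B_1, B_2 \subseteq A$, one seeks $|B_1 + \lambda B_2| \ge F_\lambda(|B_1|, |B_2|) - D|A|^{1-\sigma}$, where $F_\lambda$ passes through $H(\lambda)|A|$ when $B_1, B_2 \approx A$. Such a statement is false for arbitrary subsets of $\RR^d$ in the continuous setting, where standard Brunn--Minkowski delivers only the weaker $(1 + |N(\lambda)|^{1/d})^d$, but should hold in the discrete arithmetic setting because any small-doubling $A$ is, by Freiman's theorem (Theorem \ref{thm:gr}), contained in a generalized arithmetic progression $P$, and the bounded doubling under multiplication by $\lambda$ should force $P$ to be close to a power-basis box in $K$ with near-optimal aspect ratios. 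Making this structural alignment quantitative is the crux of the argument.

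To execute this, I would combine three ingredients: (a) the compression-based discrete Brunn--Minkowski of Lemma \ref{lem:discbm}, applied in a basis adapted to the $\cL$-action rather than the standard coordinate basis; (b) iterated one-dimensional estimates along each factor of $f$ over $\RR$, using the Balog--Shakan sum-of-dilates inequality for real roots and Mudgal's $|A + \cL A| \ge 4|A|$ for complex-conjugate pairs; and (c) a structural step showing that if the Freiman progression $P \supseteq A$ has aspect ratios in the $(1, \lambda, \ldots, \lambda^{d-1})$ basis that deviate significantly from the $H$-optimal ones, then $|A + \lambda A|$ strictly exceeds $H(\lambda)|A|$ by a definite factor that can be fed back into the bootstrap.

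With such a refined estimate, the bootstrap engine of Lemmas \ref{lem:bootstrap1}, \ref{lem:bootstrap2}, \ref{lem:bootstrap21} and \ref{lem:bootstrap22} should transplant essentially verbatim with $H(\cL_1^{-1}\cL_2)$ in place of $(p^{1/d}+q^{1/d})^d$: the coset-counting and lattice-index manipulations there depend only on the auxiliary sublattices $\cP_1, \cP_2, \cP, \cQ, L_1, L_2$ and the isomorphism results of Lemmas \ref{lem:subgrp}, \ref{lem:isom}, \ref{lem:tri2}, none of which reference the specific target constant. The main obstacle is step (c) above: extracting a quantitative ``$A$ is close to a power-basis box with correct aspect ratios'' statement from purely combinatorial small-doubling hypotheses. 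The continuous analog \cite[Theorem~2]{KP20} suggests this alignment is forced by sharpness considerations, but converting that geometric intuition into a robust combinatorial argument, in the face of the weak Freiman error terms, seems to require genuinely new tools beyond those developed here.
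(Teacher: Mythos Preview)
The statement you are attempting to prove is Conjecture~\ref{conj:better}, which the paper explicitly leaves open; there is no proof in the paper to compare against. The paper states it in the concluding remarks as a conjectural strengthening of Theorem~\ref{thm:main} and only verifies that it would imply Theorem~\ref{thm:main} via the H\"older estimate $H(\cL_1^{-1}\cL_2)\geq (|c_d|^{1/d}+|c_0|^{1/d})^d$. So your proposal is not an alternative to the paper's argument but an outline for attacking a problem the paper does not solve, and indeed you correctly flag at the end that step~(c) ``seems to require genuinely new tools beyond those developed here.''

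Beyond that self-diagnosed gap, the claim that the bootstrap of Lemmas~\ref{lem:bootstrap1}--\ref{lem:bootstrap22} ``should transplant essentially verbatim'' is too optimistic. The bootstrap in Lemma~\ref{lem:bootstrap1} hinges on the specific functional form $(|A_1|^{1/d}+|A|^{1/d})^d$ coming from Lemma~\ref{lem:discbm2}: concavity in $|A_1|$ lets one push to the endpoints $|A_1|\in\{0,|A|/k\}$, and at the endpoint $|A_1|=|A|/k$ the identity $(k^{-1/d}+1)^d\cdot k^{-1}\cdot k=(1+k^{1/d})^d$ exactly reproduces the target constant. Any replacement function $F_\lambda(|B_1|,|B_2|)$ aiming at $H(\lambda)$ would need an analogous algebraic compatibility with the coset sizes $|A|/k$, and there is no reason to expect $H(\lambda)$, which mixes all the roots of $f$, to factor this cleanly through a single coset decomposition by $\cL\ZZ^d$. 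The coset machinery and the isomorphism lemmas are indeed constant-agnostic, but the numerical closure of the bootstrap is not. A genuine proof would likely need a more refined decomposition than the single-level $\cL\ZZ^d$ or $\cL^2\ZZ^d$ cosets used here, tuned to the full splitting behaviour of $f$.
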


Note that the coprimeness condition is unnecessary here, since if we were to replace $\cL_1,\cL_2$ with $\cP \cL_1\cQ,\cP \cL_2\cQ$ to make them coprime, then
$$H((\cP \cL_1\cQ)^{-1}(\cP \cL_2\cQ))=H(\cQ^{-1}\cL_1^{-1}\cL_2\cQ)=H(\cL_1^{-1}\cL_2).$$

Moreover, Conjecture~\ref{conj:better} implies our Theorem~\ref{thm:main}, since if $\cL_1,\cL_2$ are coprime, then, by Theorem~\ref{thm:coprime}, the minimal polynomial of $\cL_1^{-1}\cL_2$ over $\ZZ$ is $c_dx^d+c_{d-1}x^{d-1}+\dots+c_0$, where $|c_d|=|\det (\cL_1)|$ and $|c_0|=|c_d||\det(\cL_1^{-1}\cL_2)|=|\det (\cL_2)|$. Therefore, by H\"older's inequality, 
\begin{align*}
    H(\cL_1^{-1}\cL_2) &= \prod_{i=1}^d (|a_i|+|b_i|)
    \geq \paren{\prod_{i=1}^d |a_i|^{1/d}+\prod_{i=1}^d |b_i|^{1/d}}^d
    = (|c_d|^{1/d}+|c_0|^{1/d})^d.
\end{align*}

There should also be a suitable generalisation of Conjecture~\ref{conj:better} to more than two variables, but, unlike Conjecture~\ref{conj:bukh2}, which itself remains open for three or more variables, it is not at all obvious what this should be. \\

\noindent
{\bf Lower-order terms.} 
Unlike with sums of dilates (see, for instance,~\cite{BS14, CL22, H21, S16}), we cannot in general hope for the error term in Theorem~\ref{thm:main} to be a constant. Indeed, in two dimensions, if we set $A = \setcond{(x,y)}{0 \leq x, y \leq n - 1}$ and $\cL$ to be the anti-clockwise rotation about  the origin through $\pi/2$, then $|A| = n^2$, but $|A + \cL A| = (2n-1)^2 = 4|A| - 4|A|^{1/2} + 1$. That is, the error term in this case is a multiple of $|A|^{1/2}$. Similarly, in $d$ dimensions, there are examples for which the error term is a multiple of $|A|^{1 - 1/d}$. Following Shakan~\cite{S16}, we conjecture that there are no significantly worse examples.

\begin{conj} \label{conj:refined}
Suppose that $\cL_1,\ldots,\cL_k\in \Mat_d(\ZZ)$ are irreducible and coprime. Then there is a constant $D$ such that, for any finite subset $A$ of $\ZZ^d$,
$$|\cL_1 A+\cdots +\cL_k A|\geq \paren{|\det(\cL_1)|^{1/d}+\cdots +|\det(\cL_k)|^{1/d}}^d|A| - D |A|^{1-1/d}.$$
\end{conj}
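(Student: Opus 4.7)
The plan is to revisit each error estimate in the proof of Theorem~\ref{thm:main} and sharpen it to $O(|A|^{1-1/d})$, focusing on the case $k=2$, as the full conjecture for $k \geq 3$ would additionally require Conjecture~\ref{conj:bukh2} in that generality. Starting from Theorem~\ref{thm:main}, any purported counterexample $A$ has small doubling, so by Pl\"unnecke--Ruzsa (Lemma~\ref{lem:pr}) and Freiman's theorem (Theorem~\ref{thm:gr}), the set $\cL_1 A + \cL_2 A$ lies in a proper progression $P$; irreducibility combined with Lemma~\ref{lem:noplane} forces $P$ to span $\QQ^d$, so a basis $v_{i_1},\ldots,v_{i_d}$ of $\RR^d$ can be extracted from the generators of $P$. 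Applying Lemma~\ref{lem:discbm} in this basis yields
$$|\cL_1 A + \cL_2 A| \geq \paren{|\det(\cL_1)|^{1/d}+|\det(\cL_2)|^{1/d}}^d |A| - \sum_{I \subsetneq [d]} |p_I(\cL_1 A + \cL_2 A)|,$$
reducing the problem to showing $|p_I(\cL_1 A + \cL_2 A)| = O(|A|^{(d-1)/d})$ for every proper $I$.

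Since $p_I(\cL_1 A + \cL_2 A) \subseteq p_I(P)$, each projection is bounded above by a product of the generator lengths $L_{i_j}$ with $j \in I$, and hence by a constant multiple of $|A|/\prod_{j \notin I} L_{i_j}$. The task therefore reduces to the shape estimate $L_{i_j} \geq c|A|^{1/d}$ for every chosen basis direction, or equivalently to an affine-hyperplane bound $|H \cap (\cL_1 A + \cL_2 A)| = O(|A|^{(d-1)/d})$. The existing argument loses in two places: Lemma~\ref{lem:noplane} only saves $|A|^{2^{1-d}}$ on hyperplanes, and the bootstrap in Lemma~\ref{lem:discbm2} further divides this by the Freiman dimension $s$, which can exceed $d$. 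I would neutralise the second source of loss by refining Theorem~\ref{thm:gr} (e.g.\ via Ruzsa's covering lemma or a Green--Tao-style refinement) to guarantee that $\cL_1 A + \cL_2 A$ actually lies in a $d$-dimensional progression.

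The main obstacle is the first source of loss: one must prove a sharp slice bound asserting that if $\cL \in \Mat_d(\QQ)$ has no non-trivial invariant subspace over $\QQ$ and $|A + \cL A| \leq K|A|$, then every $k$-dimensional affine subspace meets $A$ in at most $C_{d,k,K}|A|^{k/d}$ points. The case $k=1$ is already contained in Lemma~\ref{lem:noplane}. For the inductive step, given a $k$-plane $H$ with $|H \cap A| = m$, one would like to exploit that $H, \cL H, \ldots, \cL^{\lceil d/k \rceil - 1}H$ together span $\QQ^d$ by irreducibility, so that the iterated sumset $(A \cap H) + \cL(A \cap H) + \cdots + \cL^{\lceil d/k \rceil - 1}(A \cap H)$ has size at least of order $m^{\lceil d/k \rceil}$ by a lower-dimensional application of the theorem, while remaining at most $O(|A|)$ by iterated Pl\"unnecke--Ruzsa together with Ruzsa's triangle inequality, so that $m = O(|A|^{k/d})$. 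The delicate point is that the iterates $\cL^i H$ need not be pairwise transverse, so one must extract a maximal transverse subfamily and control the residual overlaps via a double induction on $d$ and $k$, with some additional care needed to ensure that the induced maps on the quotient spaces remain irreducible and coprime at each level. Once this sharp slice estimate is in place, substitution into the projection bound and then into Lemma~\ref{lem:discbm} immediately produces the conjectured error $D|A|^{1-1/d}$.
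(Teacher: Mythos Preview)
The statement you are attempting to prove is Conjecture~\ref{conj:refined}; the paper explicitly leaves it open and supplies no proof. Your proposal is therefore an attempted resolution of an open problem, and it contains a fundamental error in the very first step.

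Your displayed inequality
\[
|\cL_1 A + \cL_2 A| \geq \bigl(|\det(\cL_1)|^{1/d}+|\det(\cL_2)|^{1/d}\bigr)^d |A| - \sum_{I \subsetneq [d]} |p_I(\cL_1 A + \cL_2 A)|
\]
does \emph{not} follow from Lemma~\ref{lem:discbm}. That lemma, applied with the two sets $\cL_1 A$ and $\cL_2 A$, yields a main term $(|\cL_1 A|^{1/d}+|\cL_2 A|^{1/d})^d = 2^d|A|$, since $|\cL_i A|=|A|$. The determinants never appear in a single application of Brunn--Minkowski; in the paper they are produced only by the coset decomposition and the iterated bootstrap of Sections~\ref{sec:iden}--\ref{sec:main}. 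Consequently, even a perfect projection bound $|p_I(\cL_1 A+\cL_2 A)|=O(|A|^{1-1/d})$ would give you only $|\cL_1 A+\cL_2 A|\ge 2^d|A|-O(|A|^{1-1/d})$, which is far weaker than the conjecture whenever $|\det\cL_1|\cdot|\det\cL_2|>1$. To attack the conjecture one must instead track the error through every level of the bootstrap in Lemmas~\ref{lem:bootstrap21} and~\ref{lem:bootstrap22}, and the iteration there multiplies error constants geometrically; controlling that growth is the genuine obstacle, and your outline does not address it.

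Your proposed sharp slice bound is also problematic as stated. You claim that the iterated sumset $(A\cap H)+\cL(A\cap H)+\cdots+\cL^{\lceil d/k\rceil-1}(A\cap H)$ has size at least of order $m^{\lceil d/k\rceil}$. For $d=3$, $k=2$ this would force $m\lesssim|A|^{1/2}$, which is false already for $A=[n]^3$ and $H$ a coordinate plane, where $m=n^2=|A|^{2/3}$. The correct target exponent would be $m^{d/k}$, but proving that lower bound for $B+\cL B$ with $B=A\cap H$ is essentially the content of the theorem itself in a setting where $\cL$ does not preserve the plane $H$, so the induction you sketch is circular. Finally, the assertion that $\cL_1 A+\cL_2 A$ can be placed inside a genuinely $d$-dimensional progression of size $O_K(|A|)$ is itself nontrivial; Freiman--Bilu type results give coverings by boundedly many translates of such a progression, not containment in a single one.
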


A proof of this conjecture when $k = 2$ would already constitute a significant improvement on our Theorem~\ref{thm:main}, which gives an error term of the form $D|A|^{1-\sigma}$ for some $\sigma > 0$ which depends not only on $d$, but also on $|\det(\cL_1)|$ and $|\det(\cL_2)|$. \\

\vspace{2mm}
\noindent
{\bf Real-valued analogues.}
Our main result, Theorem~\ref{thm:main}, can be extended to subsets of $\RR^d$ as follows.

\begin{thm} \label{thm:mainreal}
Suppose that $\cL_1,\cL_2\in \Mat_d(\ZZ)$ are irreducible and coprime. Then there are constants $D$, $\sigma>0$ such that, for any finite subset $A$ of $\RR^d$, 
$$|\cL_1 A+\cL_2 A|\geq \paren{|\det(\cL_1)|^{1/d}+|\det(\cL_2)|^{1/d}}^d|A|-D|A|^{1-\sigma}.$$
\end{thm}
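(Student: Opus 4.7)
The plan is to reduce Theorem~\ref{thm:mainreal} to Theorem~\ref{thm:main} by generically projecting an arbitrary finite subset of $\RR^d$ down to a finite subset of $\QQ^d$, and then clearing denominators to land in $\ZZ^d$; at each step neither the cardinality nor the size of the sumset will increase.

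Given a finite $A \subset \RR^d$, I would first let $T \subset \RR$ be the $\QQ$-linear span of the (finite) set of all coordinates of all points of $A$. Then $T$ is a finite-dimensional $\QQ$-subspace of $\RR$, $A \subseteq T^d$, and, since $\cL_1, \cL_2$ have integer entries, $\cL_i T^d \subseteq T^d$ for $i = 1, 2$. The heart of the reduction is then to choose a $\QQ$-linear functional $\pi : T \to \QQ$ for which the coordinate-wise map $\pi^d : T^d \to \QQ^d$, defined by $\pi^d(x_1, \ldots, x_d) = (\pi(x_1), \ldots, \pi(x_d))$, is injective on $A$. Injectivity on $A$ is equivalent to $\pi$ not vanishing on any of the finitely many nonzero elements $a_j - a'_j \in T$ arising from distinct $a, a' \in A$ in coordinates where they differ, and is therefore achieved by choosing $\pi$ outside a finite union of proper $\QQ$-hyperplanes in the finite-dimensional $\QQ$-vector space $\mathrm{Hom}_\QQ(T, \QQ)$, which is possible since $\QQ$ is infinite.

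Because $\cL_i \in \Mat_d(\ZZ) \subset \Mat_d(\QQ)$ and $\pi$ is $\QQ$-linear, a direct entrywise calculation gives $\pi^d \circ \cL_i = \cL_i \circ \pi^d$ on $T^d$. Thus, setting $B' = \pi^d(A) \subset \QQ^d$, we have $|B'| = |A|$ and
$$\cL_1 B' + \cL_2 B' = \pi^d(\cL_1 A + \cL_2 A),$$
so $|\cL_1 B' + \cL_2 B'| \leq |\cL_1 A + \cL_2 A|$. Choosing a positive integer $M$ with $B := MB' \subset \ZZ^d$, and noting that scaling by $M$ is a bijection on $\QQ^d$ commuting with the $\cL_i$, we also have $|B| = |A|$ and $|\cL_1 B + \cL_2 B| = |\cL_1 B' + \cL_2 B'|$. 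Applying Theorem~\ref{thm:main} to $B$ then yields
$$|\cL_1 A + \cL_2 A| \geq |\cL_1 B + \cL_2 B| \geq \left(|\det(\cL_1)|^{1/d} + |\det(\cL_2)|^{1/d}\right)^d |A| - D|A|^{1-\sigma},$$
with the very same constants $D$ and $\sigma$ produced by Theorem~\ref{thm:main} (which depend only on $d$ and the $|\det(\cL_i)|$).

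The only non-routine step is the existence of a generic $\pi$, but this is the standard fact that a vector space over an infinite field is not a finite union of proper subspaces; everything else amounts to a diagram chase that uses the rationality of the entries of $\cL_1$ and $\cL_2$ to make $\pi^d$ intertwine them.
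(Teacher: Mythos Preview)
Your argument is correct. Both your proof and the paper's reduce to the $\ZZ^d$ case, Theorem~\ref{thm:main}, but by different mechanisms. The paper takes the finite set $B\subset\RR$ of all coordinates appearing in $A$, invokes a Freiman isomorphism of sufficiently high order $k$ (depending on the entries of $\cL_1,\cL_2$) to map $B$ to some $B'\subset\ZZ$, and then replaces coordinates to obtain $A'\subset\ZZ^d$ with $|\cL_1 A'+\cL_2 A'|=|\cL_1 A+\cL_2 A|$. Your route is more elementary: you exploit directly that $\cL_1,\cL_2$ have rational entries, so any $\QQ$-linear coordinate-wise projection $\pi^d:T^d\to\QQ^d$ automatically intertwines them, and a generic choice of $\pi$ makes $\pi^d$ injective on $A$. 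This yields only the inequality $|\cL_1 B'+\cL_2 B'|\leq|\cL_1 A+\cL_2 A|$ rather than equality, but that is all the lower bound requires. What your approach buys is that it avoids the Freiman isomorphism machinery and the need to calibrate any parameter to the entries of the $\cL_i$; what the paper's approach buys is the exact preservation of the sumset size, and it would also work if the $\cL_i$ merely had bounded integer entries without the $\QQ$-linearity being relevant (though here that generality is not needed).
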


To see this, suppose that $A \subset \RR^d$ and let $B \subset \RR$ be the set consisting of all real numbers that appear as a coordinate of some element of $A$. For any fixed natural number $k$, a standard result in additive combinatorics (see, for instance,~\cite[Lemma~5.25]{TV06}) allows us to find a set $B' \subset \ZZ$ which has a Freiman isomorphism of order $k$ with $B$. We then obtain a set $A' \subset \ZZ^d$ by replacing each coordinate of each element of $A$ with its image in $B'$. Provided $k$ is chosen sufficiently large in terms of the coefficients of $\cL_1$ and $\cL_2$, it is now easy to verify that $|\cL_1 A'+\cL_2 A'| = |\cL_1 A+\cL_2 A|$. Therefore, since the conclusion of the theorem is known for all $A' \subset \ZZ^d$, it is also true for all $A \subset \RR^d$.

Our results also allow us to say something about the size of $A  + \cL A$ when $\cL$  has real algebraic entries. In this case, by a process similar to that used in Section~\ref{sec:alg} to estimate $|A + \lambda \cdot A|$ for algebraic $\lambda$ and $A \subset \RR$, we can convert the problem of estimating $|A  + \cL A|$ to one of estimating $|\cL_1 B+\cL_2 B|$ for some integer matrices $\cL_1,\cL_2\in\Mat_{d'}(\ZZ)$ and $B \subset \ZZ^{d'}$. Indeed, a generalisation of Lemma~\ref{lem:alg} allows us to assume that $A\subset \QQ[\lambda_1,\lambda_2,\ldots]^d$, where $\lambda_1,\lambda_2,\ldots$ are the entries of $\cL$, so that the problem becomes equivalent to estimating $|A'+\cL' A'|$ for some $\cL'\in\Mat_{d'}(\QQ)$ and some $A'\subset \QQ^{d'}$ with $|A'| = |A|$. Clearing denominators, we can then rephrase this as the problem of estimating $|\cL_1 B+\cL_2 B|$ for some $\cL_1,\cL_2\in\Mat_{d'}(\ZZ)$ and some $B \subset \ZZ^{d'}$ with $|B| = |A|$. Finally, if $\cL_1,\cL_2$ are not irreducible or coprime, we can make them irreducible by restricting to a subspace and coprime by replacing them with a suitable $\cP\cL_1\cQ,\cP\cL_2\cQ$.

\end{document}